\theoremstyle{plain}
\newtheorem{theorem}{Theorem}
\newtheorem{lemma}[theorem]{Lemma}
\newtheorem{corollary}[theorem]{Corollary}
\newtheorem{proposition}[theorem]{Proposition}
\numberwithin{theorem}{section}
\theoremstyle{definition}
\newtheorem{definition}[theorem]{Definition}
\newtheorem{remark}[theorem]{Remark}
\newtheorem{assumption}[theorem]{Assumption}
\DeclareMathOperator{\GL}{GL}
\DeclareMathOperator{\im}{im}
\DeclareMathOperator{\rank}{rank}
\DeclareMathOperator{\Hom}{Hom}
\DeclareMathOperator{\Aut}{Aut}
\DeclareMathOperator{\Spec}{Spec}
\DeclareMathOperator{\Pic}{Pic}
\DeclareMathOperator{\vir}{vir}
\DeclareMathOperator{\CR}{CR}
\DeclareMathOperator{\amb}{amb}
\DeclareMathOperator{\ct}{ct}
\DeclareMathOperator{\cs}{cs}
\DeclareMathOperator{\Eff}{Eff}
\DeclareMathOperator{\age}{age}
\newcommand{\C}{\mathbb{C}}
\newcommand{\Q}{\mathbb{Q}}
\newcommand{\CP}{\mathbb{P}}
\newcommand{\id}{\ensuremath{\mathrm{id}}}
\newcommand{\sslash}{\mathbin{/\mkern-6mu/}}
\newtheorem*{rep@theorem}{\rep@title}
\newcommand{\newreptheorem}[2]{%
\newenvironment{rep#1}[1]{%
 \def\rep@title{#2 \ref{##1}}%
 \begin{rep@theorem}}%
 {\end{rep@theorem}}}
\title{Quantum Serre duality for quasimaps}
\author[Heath]{Levi Heath}
\address{
  \begin{tabular}{l}
   Levi Heath \\
   \hspace{.1in} Colorado State University \\
      \hspace{.1in} Department of Mathematics \\
   \hspace{.1in} 1874 Campus Delivery, Fort Collins, CO, USA, 80523-1874\\
   \hspace{.1in} Email: {\bf levi.heath@colostate.edu} \\
  \end{tabular}
}
\author[Shoemaker]{Mark Shoemaker}
\address{
  \begin{tabular}{l}
   Mark Shoemaker \\
   \hspace{.1in} Colorado State University \\
      \hspace{.1in} Department of Mathematics \\
   \hspace{.1in} 1874 Campus Delivery, Fort Collins, CO, USA, 80523-1874\\
   \hspace{.1in} Email: {\bf mark.shoemaker@colostate.edu} \\
  \end{tabular}
}
\begin{document}

% --------------------------------------------------------------
%                         Start here
% --------------------------------------------------------------

\maketitle

\textbf{Abstract}: Let $X$ be a smooth variety or orbifold and let $Z \subseteq X$ be a complete intersection defined by a section of a vector bundle $E \to X$.  Originally proposed by Givental, quantum Serre duality refers to a precise relationship between the Gromov--Witten invariants of $Z$ and those of the dual vector bundle $E^\vee$.  In this paper we prove a quantum Serre duality statement for quasimap invariants.  In shifting focus to quasimaps, we obtain a comparison which is simpler and which also holds for non-convex complete intersections. By combining our results with the wall-crossing formula developed by Zhou, we recover a quantum Serre duality statement in Gromov-Witten theory without assuming convexity.

\tableofcontents

% --------------------------------------------------------------
%                       Introduction
% --------------------------------------------------------------

\section{Introduction}

Let $X$ be a smooth projective variety or Deligne--Mumford stack and let $Z \subset X$ be a smooth complete intersection, defined by the vanishing of a section of a vector bundle $E \to X$.  Quantum Serre duality refers to a relationship between the genus-zero Gromov-Witten invariants of $Z$ and those of the dual vector bundle $E^\vee$.  In this paper we investigate this correspondence in the context of quasimap invariants.

%%%%%%%%%%%  Quantum Serre duality in Gromov--Witten theory  %%%%%%%%%%
\subsection{History}
%Quantum Serre duality in Gromov--Witten theory}

Quantum Serre duality was first described in mathematics by Givental in \cite{Givental_Equivariant_Gromov_Witten_invariants_1996}, for the case of $X= \CP^n$.  The correspondence is given as a relation between generating functions of genus-zero Gromov--Witten invariants of $Z$ and $E^\vee$ via a complicated change of variables and a non-equivariant limit.  Since then, quantum Serre duality has been generalized and reformulated in a number of ways.  In 
 \cite{Coates_Givental_Quantum_Riemann-Roch_Lefschetz_and_Serre_2007}, Coates--Givental employ Givental's symplectic formalism \cite{givental2004symplectic} to show that twisted overruled Lagrangian cones for $E$ and $E^\vee$ may be identified by a symplectic isomorphism.  When $E$ is \emph{convex}, this implies that generating functions of Gromov--Witten invariants of $Z$ may be recovered from those of $E^\vee$.  In 
 \cite{iritani2016quantum}, Iritani--Mann--Mignon observe that quantum Serre duality may be cast as an isomorphism of quantum $D$-modules, and is compatible with a Fourier--Mukai transform in $K$-theory.  This result was refined and extended to orbifolds by the second author in \cite{Shoemaker_2018}.  

The utility of the correspondence is based on the fact that in many cases the geometry of $E^\vee$ is simpler than that of $Z$.  For instance, if $X$ is a toric variety then $E^\vee$ is as well.  It was noted in \cite{Coates_Givental_Quantum_Riemann-Roch_Lefschetz_and_Serre_2007} that by using quantum Serre duality, the mirror theorem for $Z$ follows from the mirror theorem for $E^\vee$.  In recent years, quantum Serre duality has been employed to prove other correspondences in Gromov--Witten theory.  Applications have included the crepant transformation conjecture \cite[\S 6]{coates2018crepant}, the LG/CY correspondence \cite{priddis2014proof, shoemaker2020integral}, and the Gromov--Witten theory of extremal transitions \cite{mi2020extremal}.

A theme which persists through all of the formulations of quantum Serre duality described above is that 
in order to observe a correspondence, the Gromov--Witten invariants of $Z$ and $E^\vee$ must be packaged in a clever way (Lagrangian cones, $D$-modules, etc...).  There is no simple relation between the individual Gromov--Witten invariants of $Z$ and those of $E^\vee$.

%%%%%%%%%%  Quasimaps  %%%%%%%%%%
\subsection{Quasimaps}
Let $W$ be an affine variety, acted on by a reductive algebraic group $G$, and let $\theta$ be a character of $G$ such that $W^{ss}(\theta) = W^s(\theta)$.  Denote by $X$ the GIT stack quotient $[W^{ss}(\theta) /G]$.
The moduli stacks $Q^{0+}_{g,k}(X, \beta)$ of stable quasimaps to $X$ (which depends implicitly on the GIT presentation)
provide an alternative to Kontsevich's space of stable maps. 
Generalizing the stable quotient spaces of \cite{marian2011moduli}, quasimaps were first introduced for toric varieties in \cite{ciocan2010moduli} and generalized to GIT quotients in  \cite{Ciocan_Fontanine_Kim_Maulik_Stable_quasimaps_to_GIT_quotients_2014,Cheong-Ciocan-Fontanine-Kim_2015}.

In contrast to stable maps, a quasimap $f: C \dashrightarrow X$ generally defines only a rational map.  For $X = [W^{ss}(\theta) / G]$, a quasimap to $X$ is a morphism from a (orbi-)curve $C$ to the stack $[W/G]$ such that the preimage of the unstable locus is a finite set of points, disjoint from the nodes and markings of $C$.  Under certain mild conditions on $W$, $G$, and $\theta$, the moduli space of ($0+$-stable) quasimaps $Q^{0+}_{g,k}(X, \beta)$ is (relatively) proper, and carries a canonical virtual fundamental class (see \S\ref{Section: Quasimaps to a GIT stack quotient} for details).  As in Gromov--Witten theory, quasimap invariants may be defined by integrating over the virtual fundamental class.

The relationship between quasimap invariants and Gromov--Witten invariants is now well understood in many cases; these results are called $\varepsilon$-wall-crossing \cite{Ciocan_Fontanine_Kim_Wall_crossing_in_genus_zero_quasimap_theory_and_mirror_maps_2014, ciocan2020quasimap, clader2017higher, zhou2020quasimap}.  In principal, one can determine Gromov--Witten invariants from quasimap invariants and vice versa.  
%The benefit of this is that quasimap invariants are often simpler to compute (see for instance \cite{kim2018mirror}), and thus provide an indirect method for computing Gromov--Witten invariants.

%%%%%%%%%%  Results  %%%%%%%%%%
\subsection{Results}
Let $X = [W^{ss}(\theta) / G]$ be as in the previous section.  Denote by $\mathfrak{X}$ the stack quotient $[W/G]$.  A choice of representation $\tau$ in $\Hom(G, \GL(r,\C))$ determines vector bundles 
\begin{align*}
    E &:= [W^{ss}(\theta) \times \C^r /G]  \to X   \\
    \mathcal{E} &:= [W \times \C^r /G]  \to \mathfrak{X}.   
\end{align*}
 Assume $\mathcal{E}$ is \emph{weakly convex} (Definition~\ref{Definition: weak convexity}).  Let $s \in \Gamma(X, E)$ be a section of $E$ defined as in \S\ref{Section: A local complete intersections in X}, and let $Z = Z(s) \subset X$ be the complete intersection defined by $s$.

In this paper we compare the two-pointed genus-zero quasimap invariants of $Z$ with those of $E^\vee$.  In contrast to the case of  Gromov--Witten theory, here we obtain a direct relation between invariants, as well as a statement at the level of virtual classes.

The first step is to identify the relevant state spaces.  Consider the diagram
\begin{equation*}
    \begin{tikzcd}
        & IE^\vee \arrow[d, swap, "p"] \\
        IZ \arrow[r, "j"] & IX \ar[u, bend right, swap, "i"]
    \end{tikzcd}
\end{equation*}
of inertia stacks. Define the \emph{ambient cohomology} of $Z$ by 
\begin{equation*}
    H^*_{\CR,\amb}(Z) := \im(j^*)
\end{equation*}
and the \emph{cohomology of compact type} of $E^\vee$ by 
\begin{equation*} 
    H^*_{\CR,\ct}(E^\vee) := \im(i_*).
\end{equation*}
Under mild assumptions, there exists (Lemma~\ref{Lemma: Delta_+ is an isomorphism}) an isomorphism 
\begin{equation*}
    \tilde\Delta: H^*_{\CR,\ct}(E^\vee) \to H^*_{\CR,\amb}(Z),
\end{equation*}
characterized by the fact that $\tilde \Delta(i_*(\alpha)) = e^{\pi i \age_{g}(\mathcal{E})} j^*(\alpha)$ for $\alpha\in H^*(X_g)$. Our first result is that $\tilde \Delta$ identifies two-pointed genus-zero quasimap invariants of $Z$ and $E^\vee$ up to a sign.

\begin{reptheorem}{Theorem: Two-pointed invariant relationship}
Given elements $\gamma_1,\gamma_2\in H^*_{\CR,\ct}(E^\vee)$, we have the equality 
\begin{equation*}
    \left\langle \tilde\Delta\left(\gamma_1\right) \psi_1^{a_1}, \tilde\Delta\left(\gamma_2\right) \psi_2^{a_2} \right\rangle^{Z,0+}_{0,\beta}
    =
    e^{\pi i(\beta(\det \mathcal{E})+\rank(E))} \left\langle \gamma_1 \psi_1^{a_1}, \gamma_2 \psi_2^{a_2} \right\rangle^{E^\vee,0+}_{0,\beta} .
\end{equation*}
\end{reptheorem}

Our focus on two-pointed invariants arises from their role in solutions to the quantum differential equation. We use the superscripts $\infty$ and $0+$ to distinguish between Gromov--Witten theory and quasimap theory respectively.  Recall the Dubrovin connection in Gromov--Witten theory
\begin{equation*}
    \nabla_i^{X,\infty} f := \frac{\partial}{\partial t_i} f + \frac{1}{z} T^i \bullet^{X,\infty}_{\boldsymbol{t}} f ,
\end{equation*}
where $\{T^i\}_{i \in I}$ is a basis of $H^*_{CR}(X)$ and $\bullet^{X,\infty}_{\boldsymbol{t}}$ denotes the quantum product.
The fundamental solution is given by 
\begin{equation*}
    L^{X,\infty}(\boldsymbol{t},z)(\alpha) := \alpha + \sum_{i\in I} \left\langle\left\langle \frac{\alpha}{-z-\psi} , T_i \right\rangle\right\rangle^{X,\infty}_0 (\boldsymbol{t}) T^i ,
\end{equation*}
where the double bracket is defined in Definition~\ref{dinvt}.

In quasimap theory, one can define an analogous product $\bullet^{X, 0+}_{\boldsymbol{t}}$ and connection $\nabla^{X, 0+}$, replacing Gromov--Witten invariants with quasimap invariants in the definition.  As in Gromov-Witten theory, $\nabla^{X, 0+}$ is flat and its solution is given by $L^{X,0+}(\boldsymbol{t},z)$.
%\begin{equation*}
%    L^{X,0+}(\boldsymbol{t},z)(\alpha) := \alpha + \sum_{i\in I} \left\langle\left\langle \frac{\alpha}{-z-\psi} , T_i \right\rangle\right\rangle^{X,0+}_0 (\boldsymbol{t}) T^i .
%\end{equation*}
Restricting to $\boldsymbol{t} = 0$, we obtain the generating function
\begin{equation}
\tag{\ref{Equation: Fundamental Solution}}
    L^{X,0+}(z)(\alpha) := L^{X,0+}(\boldsymbol{0},z)(\alpha) = \alpha + \sum_{i\in I} \sum_{\beta\in\Eff} q^\beta \left\langle \frac{\alpha}{-z-\psi} , T_i \right\rangle^{X,0+}_{0,\beta} T^i ,
\end{equation}
of two-pointed genus-zero quasimap invariants.
Our main theorem is an identification of $L^{Z,0+}(z)$ and $L^{E^\vee,0+}(z)$.

\begin{reptheorem}{Theorem: QSD for quasimaps}
The transformation $\tilde\Delta$ identifies the operators $L^{Z,0+}(z)$ and $L^{E^\vee,0+}(z)$ up to a change of variables in the Novikov parameter:
\begin{equation*} 
    L^{Z,0+}(z) \circ \tilde\Delta
    =
    \tilde\Delta \circ L^{E^\vee,0+}(z) |_{q^\beta\mapsto e^{\pi i \beta(\det \mathcal{E})} q^\beta} .
\end{equation*}
\end{reptheorem}

This theorem provides a quasimap analogue of \cite[Proposition~6.13]{Shoemaker_2018}, proven by the second author in the context of Gromov--Witten theory.  Note that here the comparison statement is simpler and holds for more general vector bundles $E^\vee\to X$.

%in \cite{Shoemaker_2018}, the fundamental solutions $L^{Z,\infty}$ and $L^{E^\vee, \infty}$ are identified only after a complicated change of variables, whereas here the comparison is simpler.  

Our strategy of proof is different than the arguments used to prove similar results in Gromov--Witten theory.  By working with quasimaps as opposed to stable maps, we are able to recover a direct relationship between virtual classes. As in Gromov--Witten theory, we first relate the virtual fundamental classes  $[Q^{0+}_{0,2}(Z,\beta)]^{\vir}$ and  $[Q^{0+}_{0,2}(E^\vee,\beta)]^{\vir}$ to certain twisted virtual classes on  $Q^{0+}_{0,2}(X,\beta)$. We then show, using (non-quantum) Serre duality, that these twisted virtual classes agree up to a sign (Theorem~\ref{Theorem: Cycle-valued QSD}).  The cycle-valued statement is interesting in its own right, as its failure in Gromov--Witten theory is exactly what accounts for the change of variables appearing in previous results.

In the last section, we combine our main theorem with the recent $\varepsilon$-wall-crossing results of Zhou \parencite[Theorem~1.12.2]{zhou2020quasimap} to recover a quantum Serre duality statement in Gromov--Witten theory when $E^\vee$ is a GIT quotient.

\begin{repcorollary}{Corollary: GW QSD}
The operator $\tilde \Delta$ identifies the fundamental solutions $L^{Z, \infty}$ and $L^{E^\vee, \infty}$ up to a change of variables:
\begin{align*}
    &L^{Z,\infty}(\mu^{Z,\geq 0+}(q,-\psi),z) \circ \tilde \Delta \\ 
    =& \tilde \Delta \circ L^{E^\vee,\infty}(\mu^{E^\vee,\geq 0+}(q,-\psi),z) |_{q^\beta\mapsto e^{\pi i \beta(\det \mathcal{E})} q^\beta} ,
\end{align*}
where $\mu^{Z,\geq 0+}(q,-\psi),z)$ and $\mu^{E^\vee,\geq 0+}(q,-\psi),z)$ are  changes of variables defined in terms of the $I$-functions for $Z$ and $E^\vee$ respectively \parencite[\S 1.11]{zhou2020quasimap}.
\end{repcorollary}

We may interpret Theorem~\ref{Theorem: QSD for quasimaps} and Corollary~\ref{Corollary: GW QSD} together as evidence that quantum Serre duality arises naturally in the setting of quasimaps, at least when the base $X$ can be expressed as a GIT quotient.  The complicated change of variables appearing in previous results in Gromov--Witten theory is not inherent to the correspondence itself, but rather a remnant of the $\varepsilon$-wall-crossing formula arising in the passage from $0+$-stable quasimaps to $\infty$-stable maps.  A further benefit to this approach is that we no longer require $E$ be convex, as we describe below.

%%%%%%%%%%  Removing the convexity hypothesis  %%%%%%%%%%
\subsection{Removing the convexity hypothesis}
In previous results on quantum Serre duality in Gromov--Witten theory, the vector bundle $\mathcal{E} \to \mathfrak{X}$ was required to be convex.  In the case that $X$ is a variety this holds whenever $\mathcal{E}$ is \emph{semi-positive} (Definition~\ref{Definition: Semi-positive}).  However, it was observed in \cite{coates2012quantum} that when $X$ is an orbifold, semi-positivity of $\mathcal{E}$ does not imply convexity.  This has been a serious obstacle in the computation of Gromov--Witten invariants of orbifold complete intersections.  By working with quasimap invariants, we avoid the convexity requirement.  Our results require only \emph{weak convexity} (Definition~\ref{Definition: weak convexity}), which once again holds whenever $\mathcal{E}$ is
semi-positive (and in fact holds slightly more generally, see Definition~\ref{Definition: Weakly semi-positive}).  

Recent works by Gu\'er\'e \cite{guere2020congruences} and Wang \cite{wang2019mirror} also address the computation of genus-zero Gromov--Witten theory of complete intersections when $E$ is not convex.  Wang's work also features quasimaps, and is based on a similar philosophy to that used in this paper.

%%%%%%%%%%  Acknowledgements  %%%%%%%%%%
\subsection{Acknowledgements}

The first author would like to thank Adam Afandi, Vance Blankers, Renzo Cavalieri, and Andy Fry for many helpful discussions on Gromov-Witten theory, toric varieties, and orbifolds. The authors would also like to thank the MMARGS reading seminar at Colorado State University. This work was partially supported by NSF grant DMS-1708104.

% --------------------------------------------------------------
%                       Preliminaries
% --------------------------------------------------------------

\section{Preliminaries}
\label{Section: Preliminaries}
In this section we define quasimap invariants and introduce the generating functions which we will use in the rest of the paper.  This section also serves to set notation.

%%%%%%%%%% Orbifold cohomology %%%%%%%%%%
\subsection{Orbifold cohomology} 
\label{Section: Orbifold cohomology}

We refer the reader to \cite{Adem_Leida_Ruan_Orbifolds_and_stringy_topology_2007} for an introduction to Chen-Ruan orbifold cohomology.  Denote by $X$ an oriented orbifold which admits a finite good cover.

\begin{definition}
Define the \emph{inertia stack} of $X$, written $IX$, by the fiber square
\begin{center}
\begin{tikzcd}
    IX \arrow[r] \arrow[d]
    & X \arrow[d,"\Delta"] \\
    X \arrow[r,"\Delta"]
    & X \times X .
\end{tikzcd}
\end{center}
\end{definition}

For a stack quotient $X=[U/G]$ the inertia stack is composed of a disjoint union of suborbifolds $X_g=[U_g/C(g)]$ where $U_g$ are the elements of $U$ fixed by $g\in G$.  Let $S$ be a set of representatives of each conjugacy class of $G$, then we may write the inertia stack as
\begin{equation*}
    IX = \bigsqcup_{g\in S} X_g .
\end{equation*}
We refer to each $X_g$ as a \emph{twisted sector} and call the twisted sector corresponding to the identity the \emph{untwisted sector}.

Let $(x,g)$ be a point in a twisted sector $X_g$.  The tangent space $T_x X$ splits as the direct sum of eigenspaces
\begin{equation*}
    T_x X = \bigoplus_{0\leq f<1} (T_x X)_f ,
\end{equation*}
where $g$ acts on the fiber $(T_x X)_f$ by multiplication by $e^{2\pi i f}$. The \emph{age shift} for $X_g$ is defined to be
\begin{equation*}
    \iota_g = \sum_{0\leq f <1} f \dim_\C (T_x X)_f .
\end{equation*}

\begin{definition}
\cite{Chen-Ruan_2004,Adem_Leida_Ruan_Orbifolds_and_stringy_topology_2007}
The \emph{$d$th Chen-Ruan cohomology group} of $X$ is
\begin{equation*}
    H^d_{\CR}(X) := \bigoplus_{g\in S} H^{d-2\iota_g} (X_g;\C)
\end{equation*}
and
\begin{equation*}
    H^*_{\CR}(X) := \bigoplus_{d\in \Q^{0+}_{\geq 0}} H^{d}_{\CR} (X).
\end{equation*}
\end{definition}

Unless specified otherwise we assume cohomology groups have complex coefficients.  Let $\iota$ be the involution of $IX$ that maps $X_g$ to $X_{g^{-1}}$.  Denote the compactly supported Chen-Ruan cohomology of $X$ by $H^*_{\CR, \cs}(X)$. 

\begin{definition} \parencite[\S 4]{Adem_Leida_Ruan_Orbifolds_and_stringy_topology_2007}
The \emph{Chen-Ruan Poincar\'e pairing} of the classes $\alpha\in \nolinebreak H^*_{\CR}(X)$ and $\beta\in H^*_{\CR,\cs}(X)$ is given by the integral
\begin{equation*}
    \langle \alpha, \beta \rangle^X := \int_{IX} \alpha \cup \iota^*\beta .
\end{equation*}
\end{definition}

%%%%%%%%%% GIT stack quotients %%%%%%%%%%
\subsection{GIT stack quotients}
\label{Section: GIT stack quotients}

We follow the notations and definitions developed in \cite{Cheong-Ciocan-Fontanine-Kim_2015}.  Let $W$ be an irreducible affine variety with a right action by a reductive algebraic group $G$.  Let $\theta$ be a character of $G$ and $\C_\theta$ be the corresponding one-dimensional $G$-representation.  We also denote the linearization $W\times\C_\theta$ as $\C_\theta$.  Denote the semistable locus by $W^{ss}(\theta)$ and the stable locus by $W^s(\theta)$.  Observe the following diagram of quotients:

\begin{center}
\begin{tikzcd}
    X:=[W^{ss}(\theta)/G] \arrow[r,hook] \arrow[d]
    & \mathfrak{X}:=[W/G] \arrow[d] \\
    \underline{X}:=W \hspace{-5pt} \sslash_{\hspace{-2pt}\theta} \hspace{-3pt} G \arrow[r]
    &\underline{X}_0:=\Spec(\C[W]^G) .
\end{tikzcd}
\end{center}

We refer to $X$ as the GIT stack quotient, $\mathfrak{X}$ as the stack quotient, $\underline{X}$ as the underlying coarse space or GIT quotient with respect to $\theta$ and $\underline{X}_0$ as the affine quotient. 

\begin{assumption}
We assume $W^{ss}(\theta)=W^s(\theta)$ so that $X$ is a quasi-compact Deligne-Mumford stack.
\end{assumption}

\begin{definition}\label{Definition: Line and vector bundles}
Fix a representation $\tau: G \to \GL(r,\C)$ for some integer $r$. Define the vector bundle $\mathcal{E} = \mathcal{E}_\tau \to\mathfrak{X}$ as
\begin{equation*}
    \mathcal{E} := [W\times\C^r/G] ,
\end{equation*}
where $g\in G$ acts on the $\C^r$ factor by multiplication by $\tau(g)$. For simplicity we will omit $\tau$ from the notation for the representation $\C^r$ and the vector bundle $\mathcal{E}$ throughout the paper.

The vector bundle $\mathcal{E}$ restricts to a vector bundle on $X$, which we denote by $E$.  It  may be realized as the quotient
$E:=[W^{ss}(\theta)\times \C^r/G].$
\end{definition}

\begin{definition}
Let $(x,g)$ be a point in a twisted sector $\mathfrak{X}_g$.  The fiber $\mathcal{E}_x$ splits as the direct sum of eigenspaces
\begin{equation*}
    \mathcal{E}_x = \bigoplus_{0\leq f<1} (\mathcal{E}_x)_f ,
\end{equation*}
where $g$ acts on $(\mathcal{E}_x)_f$ by multiplication by $e^{2\pi i f}$. Define the \emph{age} of $\mathcal{E}$ at $g$ as
\begin{equation*}
    \age_g (\mathcal{E}) := \sum_{0\leq f < 1} f \dim_\C (\mathcal{E}_x)_f  .
    %\iota_g = \sum_{0\leq f <1} f \dim_\C (T_x X)_f .
\end{equation*}
\end{definition}

%%%%%%%%%% A complete intersection in $X$ %%%%%%%%%%
\subsubsection{A local complete intersection in $X$}
\label{Section: A local complete intersections in X}

Let $\mathcal{E}$ and $E$ be as defined in Definition~\ref{Definition: Line and vector bundles}.
%Choose characters $\tau_1, \ldots, \tau_r: G \to \C^*$.  Let  $\C_{\overline{\tau}}$ denote the direct sum $\bigoplus_{i=1}^r \C_{\tau_i}$.  Denote by  $\mathcal{E}$ and $E$ the vector bundles $\bigoplus_{i=1}^r \mathcal{L}_{\tau_i}$ and $\bigoplus_{i=1}^r L_{\tau_i}$ respectively.
We do not assume $E$ is pulled back from the coarse space $\underline{X}$, hence the isotropy groups $G_x$ for $x$ in $X$ may act nontrivially on the fibers of $E$.

Let $s$ be a $G$-invariant global section of $\Gamma(W,W\times \C^r)^G$ such that the zero locus $Z(s)$ is an irreducible complete intersection intersecting the semistable locus $W^{ss}(\theta)$ non-trivially.  Assume $Z(s)\cap W^{ss}(\theta)$ is non singular.

\begin{definition}
Define $\mathfrak{Z}:=[Z(s)/G]$ to be the a closed substack of $\mathfrak{X}$ given by the section $s$.  Define $Z$ as the GIT stack quotient
\begin{equation*}
    Z := [(Z(s)\cap W^{ss}(\theta))/G].
\end{equation*}
\end{definition}
The semistable locus of $Z(s)$ is exactly the intersection of $Z(s)$ and $W^{ss}(\theta)$.  The section $s$ restricts to a section of $W^{ss}(\theta)\times\C^r$ whose zero locus is exactly $Z(s)\cap W^{ss}(\theta)$.  Equivalently, $\mathfrak Z$ and $Z$ are defined as the zero loci of the sections of $\mathcal{E}$ and $E$ induced by $s$.  We will also sometimes denote these sections by $s$.

\begin{remark}
If we assume the representation $\C^r$ splits as the direct sum of one-dimensional representations $\oplus_{i=1}^r\C_{\tau_i}$ for characters $\tau_i:G\to\C^*$, then $\mathcal{E}$ and $E$ split and $Z$ defines a complete intersection of $X$. 
\end{remark}

\begin{definition}
For a fixed $g$ in $S$, denote
\begin{equation*}
    E_g := [ (W^{ss}(\theta) \times \C^r )^g / C(g) ] .
\end{equation*}
\end{definition}
The inertia stack $IE$ may be written as $IE = \bigsqcup_{g\in S} E_g$.  Note that $E_g$ is not usually equal to $E|_{X_g}$.  When $g$ acts nontrivially on the representation $\C^r$, the twisted sector $E_g$ equals $X_g$.  If $g$ acts trivially, then $E_g$ is a vector bundle over $X_g$.  %In general, $(W^{ss}(\theta)\times \C_{\overline{\tau}})^g$ splits as the direct sum of the line bundles $(W^{ss}(\theta)\times \C_{\tau_i})^g$ where $g$ acts trivially on $\C_{\tau_i}$.

The section $s$ induces a section $s_g\in\Gamma(X_g,E_g)$ for each $g\in S$.  The twisted sector $Z_g\subset IZ$ may be realized as the vanishing locus of $s_g$.  Denote by $j:IZ\to IX$ and $0_E:IX\to IE$ the inclusion and zero section respectively.  Then we have a fiber square
\begin{equation}
\label{Diagram: The bottom fiber square for the fiber cube}
\begin{tikzcd}
      IZ \arrow[r, "j"] \arrow[d, "j"]
    & IX  \arrow[d, "s"] \\
      IX \arrow[r, "0_{E}"]
    & IE .
\end{tikzcd}
\end{equation}

By abuse of notation, we will also denote by $j$, $s$, and $0_E$ the corresponding maps between the rigidified inertia stacks.

\begin{definition}
The image of the cohomology of $X_g$ under the pullback via the inclusion $j_g:Z_g\to X_g$ is called the \emph{ambient cohomology} of $Z_g$ and denoted $H^*_{\amb}(Z_g)$.  The \emph{ambient Chen-Ruan cohomology} of $Z$ is 
\begin{equation*}
    H^*_{\CR,\amb}(Z) := H^*_{\amb}(IZ) = \bigoplus_{g\in S} H^*_{\amb}(Z_g) .
\end{equation*}
\end{definition}

Restricting the computation of quasimap invariants of $Z$ to the ambient cohomology is common and, as Proposition~\ref{Proposition: Pullback of L^X/Z is equivalent to L^Z of the pullback} will suggests, natural.

\begin{assumption}
\label{Assumption: The Poincare pairing on the ambient cohomology of Z is non-degenerate}
Following \cite{iritani2016quantum} and \cite{Shoemaker_2018}, we assume the Poincar\'e pairing on $H^*_{\CR,\amb}(Z)$ is non-degenerate.  This is equivalent to assuming the cohomology of $Z$ splits as the direct sum:
\begin{equation*}
    H^*_{\CR} (Z) = \im(j^*) \oplus \ker(j_*).
\end{equation*}
\end{assumption}

%%%%%%%%%%  The total space $E^\vee$  %%%%%%%%%%
\subsubsection{The total space $E^\vee$}

%With the setting as in Section~\ref{Section: A local complete intersections in X}, consider the line bundles
%\begin{equation*}
    %\mathcal{L}_{\tau_i}^\vee := [W\times\C_{\tau_i^\vee}/G] \hspace{1 cm} L_{\tau_i}^\vee := [(W^{ss}(\theta) \times\C_{\tau_i^\vee})/G].
%\end{equation*}
%and vector bundles $\mathcal{E}^\vee := \bigoplus_{i=1}^r \mathcal{L}_{\tau_i}^\vee$ and $E^\vee := \bigoplus_{i=1}^r L_{\tau_i}^\vee$.  

With the setting as in \S \ref{Section: A local complete intersections in X}, consider the vector bundles
\begin{equation*}
    \mathcal{E}^\vee := [W\times\C^r/G] \hspace{1 cm} E^\vee := [(W^{ss}(\theta) \times\C^r)/G] .
\end{equation*}
Note that $IE^\vee = \bigsqcup_{g\in S} E_g^\vee$.  In this section we describe the Chen--Ruan cohomology of compact type of $E^\vee$.  A more detailed introduction to cohomology of compact type appears in \S 2 of \cite{Shoemaker_2018}.  Note that in \cite{Shoemaker_2018}, the cohomology of compact type is referred to as the narrow cohomology.

\begin{definition}\parencite[Definition 2.1]{Shoemaker_2018} \label{ctpairing}
The \emph{cohomology of compact type} of $E^\vee_g$ is the image of the natural homomorphism 
\begin{equation*}
    \phi:H^*_{\cs}(E^\vee_g)\to H^*(E^\vee_g)
\end{equation*}
from compactly supported cohomology to cohomology. The \emph{Chen-Ruan cohomology of compact type} is
\begin{equation*}
    H^*_{\CR,\ct}(E^\vee) := H^*_{\ct}(IE^\vee) = \bigoplus_{g\in S} H^*_{\ct}(E^\vee_g) .
\end{equation*}
\end{definition}

Given a class $\gamma \in H^*_{\CR,\ct}(E^\vee)$, we call $\overline \gamma \in H^*_{\CR,\cs}(E^\vee)$ a \emph{lift} of $\gamma$ if $\phi(\overline \gamma) = \gamma$.  It is proven in \cite[Lemma 2.6]{Shoemaker_2018} that for $\gamma \in H^*_{\CR,\ct}(E^\vee)$ and $\kappa \in \ker(\phi) \subset H^*_{\CR,\cs}(E^\vee)$, $\kappa \cup \gamma$ is zero.  From this fact one can check that the following pairing is well-defined and nondegenerate.

\begin{definition}
Define the \emph{compact type pairing} on $E^\vee$ as follows.  For $\alpha, \beta \in H^*_{\CR,\ct}(E^\vee)$,
\begin{equation*}
    \langle \alpha, \beta\rangle^{E^\vee, \ct} := \langle \alpha, \overline \beta\rangle^{E^\vee} = \int_{IE^\vee} \alpha \cup \iota^* (\overline \beta),
\end{equation*}
where $\overline \beta \in H^*_{\CR,\cs}(E^\vee)$ is a lift of $\beta$.
\end{definition}

Let $p_g: E_g^\vee \to X_g$ denote the vector bundle projection for each $g\in S$.  Define the linear map
\begin{equation*}
    e(p_g^*E^\vee_g)\cup-:H^*(E^\vee_g)\to H^*(E^\vee_g) 
\end{equation*}
by cupping with the Euler class $e(p_g^*E^\vee_g)$.  Denote by $i:IX\hookrightarrow IE^\vee$ the inclusion induced by the zero section of $E^\vee$ and $i_g:X_g\hookrightarrow E_g^\vee$ the inclusion induced by the zero section of $E_g^\vee$.

\begin{lemma}\emph{\parencite[Proposition~2.15]{Shoemaker_2018}}
\label{Lemma: Narrow cohomology isomorphism}
If $X$ is compact, then the following groups are equal:
\begin{equation}
\label{Equation: Narrow cohomology}
    H^*_{\CR,\ct}(E^\vee) = \im\left( i_*:H^*_{\CR}(X)\to H^*_{\CR}(E^\vee) \right) = \bigoplus_{g\in S} \im\left( e(p^*E^\vee_g) \cup - \right) .
\end{equation}
\end{lemma}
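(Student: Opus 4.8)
The plan is to prove the two claimed equalities in \eqref{Equation: Narrow cohomology} by reducing the statement sector-by-sector, since all three objects decompose as direct sums over $g \in S$, and then to analyze each twisted sector $E^\vee_g$ according to whether $g$ acts trivially or nontrivially on the representation $\C^r$. First I would observe that for a fixed $g$, the vector bundle projection $p_g \colon E^\vee_g \to X_g$ is an affine bundle, hence a homotopy equivalence, so $p_g^*$ is an isomorphism on ordinary cohomology; the zero section $i_g$ is a section of $p_g$, so $i_g^* = (p_g^*)^{-1}$ and thus $i_g^*$ is also an isomorphism. The composite $i_g^* \circ (i_g)_* \colon H^*(X_g) \to H^*(X_g)$ is cup product with the Euler class $e(E^\vee_g \to X_g)$ by the self-intersection formula (here I should be careful: when $g$ acts nontrivially on $\C^r$, the sector $E^\vee_g$ equals $X_g$ and $E^\vee_g \to X_g$ is the zero bundle, whose Euler class is $1$ if the rank is $0$ and $0$ otherwise — but in that case $i_g$ is the identity and $(i_g)_*$ is an isomorphism trivially, so the formula still reads correctly once one interprets $e(p_g^* E^\vee_g)$ appropriately). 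Applying $p_g^*$ and using that $i_g^*$ is an isomorphism, one gets $(i_g)_* = p_g^* \circ (e(E^\vee_g) \cup -) \circ i_g^*$ up to the identification $p_g^* = (i_g^*)^{-1}$, from which $\im((i_g)_*) = p_g^*\big(\im(e(E^\vee_g)\cup -)\big)$; and since $p_g^* E^\vee_g$ and the identification of $H^*(E^\vee_g)$ with $H^*(X_g)$ via $p_g^*$ intertwine cup product with $e(p_g^* E^\vee_g)$ and cup product with $e(E^\vee_g)$, this gives the second equality $\im((i_g)_*) = \im(e(p_g^* E^\vee_g) \cup -)$ inside $H^*(E^\vee_g)$.

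Next I would establish the first equality, $H^*_{\ct}(E^\vee_g) = \im((i_g)_*)$, which is where the compactness hypothesis on $X$ enters. The inclusion $i_g \colon X_g \hookrightarrow E^\vee_g$ is a proper map (since $X_g$ is compact), so it induces a pushforward $(i_g)_* \colon H^*(X_g) \to H^*_{\cs}(E^\vee_g)$ on compactly supported cohomology, and $\phi \circ (i_g)_*^{\cs} = (i_g)_*$ where the left map lands in ordinary cohomology; thus $\im((i_g)_*) \subseteq \im(\phi) = H^*_{\ct}(E^\vee_g)$. For the reverse inclusion, I would use that $E^\vee_g$ deformation retracts onto $X_g$ (via $p_g$), together with the long exact sequence relating $H^*_{\cs}$, $H^*$, and the cohomology of the "boundary at infinity" of the bundle — or more cleanly, the Thom isomorphism: compactly supported cohomology of a real vector bundle (or affine bundle) of rank $2d$ over $X_g$ is $H^{*-2d}(X_g)$ via the Thom class, and under the map $\phi$ to ordinary cohomology the Thom class maps to (the pullback of) the Euler class. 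Hence $\im(\phi) = \im(p_g^*(-) \cup \mathrm{Thom}) \xrightarrow{\phi} \im(p_g^*(-) \cup e(p_g^* E^\vee_g)) = \im(e(p_g^* E^\vee_g) \cup -)$, which we already identified with $\im((i_g)_*)$. Assembling the sectors gives \eqref{Equation: Narrow cohomology}.

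The main obstacle I anticipate is handling the twisted sectors $E^\vee_g$ correctly and uniformly, particularly the dichotomy noted in the text: when $g$ acts nontrivially on $\C^r$ one has $E^\vee_g = X_g$ rather than a genuine vector bundle over $X_g$, so "$p_g^* E^\vee_g$" and its Euler class must be interpreted via the eigenbundle decomposition $\mathcal{E}_x = \bigoplus_f (\mathcal{E}_x)_f$ — only the $f = 0$ part of $E^\vee$ survives as a bundle over $X_g$ while the rest contributes to the age shift and does not appear as a topological bundle on the sector. I would therefore want to be careful that the Euler class $e(p_g^* E^\vee_g)$ in the statement refers to the rank-$(\dim (\mathcal{E}_x)_0)$ bundle that actually sits over $X_g$, and check that with this reading the Thom-isomorphism argument and the self-intersection formula both go through verbatim, the nontrivially-acting case being the degenerate instance where this bundle has rank equal to the full rank only when $g$ acts trivially. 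A secondary technical point is justifying the use of these classical topological facts (affine-bundle homotopy invariance, Thom isomorphism, proper pushforward, the self-intersection formula) for Deligne–Mumford stacks rather than manifolds; I would either cite the orbifold-cohomology references already in the bibliography or pass to a presentation as a global quotient and work equivariantly, noting these results hold in that generality.
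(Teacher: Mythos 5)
Your proposal is correct and follows essentially the same route as the paper: working sector by sector, using that the compactly supported pushforward $i_{g*}^{\cs}$ along the zero section is an isomorphism (Thom isomorphism, with compactness of $X$ ensuring compact supports behave well) together with the factorization $i_{g*}=\phi\circ i_{g*}^{\cs}$ for the first equality, and the identity $e(p_g^*E^\vee_g)\cup\alpha = i_{g*}(i_g^*\alpha)$ with $i_g^*$ an isomorphism for the second. Your self-intersection-formula phrasing and your careful reading of $E^\vee_g$ as the $g$-fixed subbundle (degenerating to $X_g$ when $g$ acts without nonzero fixed vectors) match the paper's projection-formula argument and its trivial-case remark, respectively.
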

\begin{proof}
We will prove the result for a given twisted sector $E^\vee_g$.  If $g$ fixes only the origin of $W\times\C^r$, then the equality holds trivially since $X$ is compact.  Let $i_{g*}^{\cs}$ denote the pushforward on compactly supported cohomology induced by $i_g$.  Note that $i_{g*}$ factors as
\begin{equation*}
    i_{g*} = i_{g*} \circ \phi = \phi \circ i_{g*}^{\cs} .
\end{equation*}
Thus, by \parencite[Equation~(6.11)]{bott&tu}, $i_{g*}^{\cs}$ is an isomorphism.  We obtain the first equality of \eqref{Equation: Narrow cohomology}:
\begin{equation*}
    \im\left( i_{g*}:H^*(X_g)\to H^*(E^\vee_g) \right) = \im\left( \phi \circ i_{g*}^{\cs} \right) = H^*_{\ct}(E^\vee_g) .
\end{equation*}

Now, let $\alpha$ be an element of $H^*(E_g^\vee)$.  Then
\begin{align*}
    e(p_g^*E^\vee_g) \cup \alpha &= i_{g*}(1) \cup \alpha \\
    &= i_{g*}( 1 \cup i_g^*(\alpha) ) \\
    &= i_{g*} ( i_g^* (\alpha) ) .
\end{align*}
The first equality holds because $X_g$ can be viewed as the zero locus of the tautological section of $p_g^*E_g^\vee$ and the second is the projection formula. 

Since the pullback via $i_g$ is an isomorphism, the second equality of \eqref{Equation: Narrow cohomology} holds.
\end{proof}

Later we will use Lemma~\ref{Lemma: Narrow cohomology isomorphism} to express classes in the Chen-Ruan cohomology of compact type of $E^\vee$ in terms of classes in the Chen-Ruan cohomology of $X$.  For $\gamma$ an element of $H^*_{\ct}(E^\vee_g)$, there exist a class $\alpha$ in $H^*(X_g)$ such that
\begin{equation}
\label{Equation: Rewriting an element of narrow cohomology}
        \gamma = i_{g*}(\alpha) = p_g^*( e(E^\vee_g) \cup \alpha ) .
\end{equation}

%%%%%%%%%%  Quasimaps to a GIT stack quotient  %%%%%%%%%%
\subsection{Quasimaps to a GIT stack quotient}
\label{Section: Quasimaps to a GIT stack quotient}

In this section we recall the definition of $\varepsilon$-stable quasimap invariants and define the generating functions that will appear in our later theorems. Let $X$ be a GIT stack quotient as in~\S\ref{Section: GIT stack quotients}.

\begin{definition}
\parencite[\S 4]{Abramovich_Graber_Vistoli_Gromov_Witten_theory_of_Deligne_Mumford_stacks_2008}
Over an algebraically closed field, a \emph{twisted curve} is a connected, one-dimensional Deligne-Mumford stack which is \'etale locally a nodal curve, and a scheme outside the marked points and the singular locus.
\end{definition}

\begin{definition}
\parencite[Definition 2.1]{Cheong-Ciocan-Fontanine-Kim_2015}
Let $(C, x_1,\ldots, x_k)$ be a $k$-pointed, genus-$g$ twisted curve and let $\varphi: (C,x_1,\ldots,x_k)\to (\underline{C}, \underline{x}_1,\ldots,\underline{x}_k)$ be its rigidification.
\begin{itemize}
    \item A $k$-pointed, genus-$g$ \emph{quasimap to $X$} is a twisted curve $(C,x_1,\ldots,x_k)$ together with a representable morphism $[u]:C\to\mathfrak{X}$ such that $[u]^{-1}(\mathfrak{X}\setminus X)$, referred to as the \emph{base locus}, is purely zero-dimensional. We denote this quasimap as $(C,x_1,\ldots,x_k,[u])$ and call $(C,x_1,\ldots,x_k)$ or $C$ its \emph{source curve}.
    \item The \emph{class} $\beta$ of the quasimap is an element of $\Hom(\Pic\mathfrak{X},\Q)$ defined by
    \[ \beta: \Pic\mathfrak{X} \to \Q, \qquad \mathcal{L} \mapsto \deg\left( [u]^*(\mathcal{L}) \right) . \]
    The \emph{degree} of the quasimap $[u]$ is given by the rational number $\beta(\mathcal{L}_\theta)$. We say $\beta$ in $\Hom(\Pic\mathfrak{X},\Q)$ is \emph{$\theta$-effective} if it is represented by a quasimap to $X$. The set of $\theta$-effective classes forms a sub-semigroup of $\Hom(\Pic\mathfrak{X},\Q)$ denoted $\Eff(W,G,\theta)$. For brevity, we often denote this sub-semigroup as $\Eff$.
    \item If the base locus of a quasimap $(C,x_1,\ldots,x_k,[u])$ does not contain marked or nodal gerbes, then we call the quasimap \emph{prestable}.
    \item Let $\mathbf{e}$ be the least common multiple of $|\Aut(p)|$ for all geometric points $p\to X$ with isotropy groups $\Aut(p)$. Fix a rational number~$\varepsilon$. A prestable quasimap is called \emph{$\varepsilon$-stable} if 
    \begin{enumerate}
        \item the $\Q$-line bundle
        \begin{equation}\label{Equation: Quasimap stability condition}
            \omega_{\underline{C}} \left( \sum_{i=1}^k \underline{x}_i \right) \otimes \left( \varphi_*( [u]^*\mathcal{L}_\theta^{\otimes \mathbf{e}} ) \right)^{\varepsilon} 
        \end{equation}
        on the coarse curve $\underline{C}$ is ample and
        \item for all $x\in C$, 
        \begin{equation*}
            \varepsilon l(x) \leq 1,
        \end{equation*}
        where $l(x)$ is the \emph{length at $x$} defined in \parencite[\S7.1]{Ciocan_Fontanine_Kim_Maulik_Stable_quasimaps_to_GIT_quotients_2014}.
    \end{enumerate}
    We say a prestable quasimap is \emph{$0+$-stable}, if \eqref{Equation: Quasimap stability condition} is ample for all rational $\varepsilon>0$.
\end{itemize}
\end{definition}

\begin{definition}\label{Definition: quasimap moduli space}
The \emph{moduli space of $k$-pointed, genus-$g$, $\varepsilon$-stable quasimaps to $X$ of degree $\beta$}, denoted by $Q^{\varepsilon}_{g,k}(X,\beta)$, is the space of isomorphism classes of $k$-pointed, genus-$g$, $\varepsilon$-stable quasimaps to $X$ of degree $\beta$.
\end{definition}

By Theorem~2.7 of \cite{Cheong-Ciocan-Fontanine-Kim_2015}, $Q^{\varepsilon}_{g,k}(X,\beta)$ is a proper Deligne-Mumford stack over the affine quotient $\underline{X}_0$. Furthermore, when the singularities of $W$ are at worst local complete intersections and $W^{ss}(\theta)$ is nonsingular, $Q^{\varepsilon}_{g,k}(X,\beta)$ carries a canonical perfect obstruction theory.

When $\varepsilon$ is sufficiently large, the moduli space of $\varepsilon$-stable quasimaps $Q_{g,k}^{\varepsilon}(X,\beta)$ coincides with the moduli space of stable maps $\overline{\mathcal{M}}_{g,n}(X,\beta)$ from Gromov-Witten theory. We adopt the notation of \cite{Ciocan_Fontanine_Kim_Maulik_Stable_quasimaps_to_GIT_quotients_2014,Cheong-Ciocan-Fontanine-Kim_2015} and define 
\begin{equation*}
    Q_{g,k}^{\infty}(X,\beta) := \overline{\mathcal{M}}_{g,n}(X,\beta) .    
\end{equation*}

%%%%%%%%%%  The Dubrovin connection  %%%%%%%%%%
\subsection{The Dubrovin connection}
\label{Section: The Dubrovin connection}

Since the base locus of a $\varepsilon$-stable quasimap is disjoint from the marked gerbes for $i=1,\ldots, k$, there exists evaluation maps to the rigidified inertia stack $\bar IX$: 
\begin{equation*}
    ev_i : Q^{\varepsilon}_{g,k}(X,\beta) \to \bar I X .
\end{equation*}
We refer the reader to \parencite[\S 4.4]{Abramovich_Graber_Vistoli_Gromov_Witten_theory_of_Deligne_Mumford_stacks_2008} for an introduction to such maps. There is a canonical map $\bar\omega: IX \to \bar IX$ inducing an isomorphism of cohomology $\bar\omega_*: H^*_{\CR}(X) \to H^*(\bar IX)$. By composing $ev_i^*$ with $\bar\omega_*^{-1}$, we may define quasimap invariants using cohomology class insertions from $H^*_{\CR}(X)$.

Let $\psi_i$ represent the first Chern class of the universal cotangent line over $Q^{\varepsilon}_{g,k}(X,\beta)$ whose fiber over a point $(C,x_1,\ldots,x_k,[u])$ is given by the cotangent space of the underlying coarse curve $\underline{C}$ at the $i$th marked point. Denote by $[Q^{\varepsilon}_{g,k}(X,\beta)]^{\vir}$ the virtual fundamental class from \parencite[\S 4.5]{Ciocan_Fontanine_Kim_Maulik_Stable_quasimaps_to_GIT_quotients_2014} and \parencite[\S 2.4.5]{Cheong-Ciocan-Fontanine-Kim_2015}.  By \parencite[Theorem~2.7]{Cheong-Ciocan-Fontanine-Kim_2015}, $Q^{\varepsilon}_{g,k}(X,\beta)$ is proper over $\underline{X}_0$. By \parencite[Lemma~01W6]{stacks-project}, this implies each evaluation map $ev_i$ is proper.

\begin{definition}\label{dinvt}
Assume $X$ is proper. For non-negative integers $a_i$ and classes $\alpha_i\in H^*_{\CR}(X)$, \emph{$\varepsilon$-quasimap invariants} or simply \emph{quasimap invariants} are given by integrals 
\begin{equation} \label{Equation: Quasimap invariants}
    \left\langle \alpha_1 \psi_1^{a_1} , \ldots , \alpha_k \psi_k^{a_k} \right\rangle_{g,\beta}^{X,\varepsilon} := \int_{\left[Q^{\varepsilon}_{g,k}(X,\beta)\right]^{\vir}} \prod_{i=1}^k ev_i^*(\alpha_i) \psi_i^{a_i} .
\end{equation}
Fix a basis $\{T^i\}_{i\in I}$ of $H^*_{\CR}(X)$ and let $\boldsymbol{t}=\sum_{i\in I} t_i T^i$.  Define the double-bracket
\begin{equation}\label{equation: double brackets}
    \left\langle\left\langle \alpha_1 \psi_1^{a_1} , \ldots , \alpha_k \psi_k^{a_k} \right\rangle\right\rangle^{X,\varepsilon}_{0} (\boldsymbol{t}) := \sum_{\beta \in \Eff} \sum_{m \geq 0} \frac{q^\beta}{m!} \left\langle \alpha_1 \psi_1^{a_1} , \ldots , \alpha_k \psi_k^{a_k}, \boldsymbol{t}, \ldots, \boldsymbol{t} \right\rangle_{0,\beta}^{X,\varepsilon}  .
\end{equation}
\end{definition}

If $X$ is not proper (i.e. if $\underline{X}_0$ is not a point), then slightly more care must be taken to define quasimap invariants.  If at least one class $\alpha_j$ lies in compactly supported cohomology $H^*_{\CR, \cs}(X)$ for $1 \leq j \leq k$ then we define $$\left\langle \alpha_1 \psi_1^{a_1} , \ldots , \alpha_k \psi_k^{a_k} \right\rangle_{g,\beta}^{X,\varepsilon}$$ exactly as in \eqref{Equation: Quasimap invariants}.

\begin{definition}\label{nonproper brackets}
We also define $\varepsilon$-quasimap invariants whenever at least two insertion classes are compact type. For simplicity assume $\alpha_j$ and $\alpha_k$ are in $H^*_{\CR, \ct}(X)$ for $ j \leq k$.  Define
\begin{equation} \label{Equation: nonproper invariants}
\left\langle \alpha_1 \psi_1^{a_1} , \ldots , \alpha_k \psi_k^{a_k} \right\rangle_{g,\beta}^{X,\varepsilon} :=
 \left\langle \tilde{ev}_{k*} \left( \prod_{i=1}^{k-1} ev_i^*(\alpha_i) \psi_i^{a_i} \cup \psi_k^{a_k} \cap \left[Q^{\varepsilon}_{0,k}(X,\beta)\right]^{\vir} \right), \alpha_k  \right\rangle^{X ,\ct},
\end{equation}
where $\langle -  , - \rangle^{X, \ct}$ is the compact type pairing of Definition~\ref{ctpairing} and $\tilde{ev}_{k} = \nolinebreak \iota \circ \nolinebreak ev_k$.  Define the double bracket exactly as in \eqref{equation: double brackets}, but replacing \eqref{Equation: Quasimap invariants} with~\eqref{Equation: nonproper invariants}.
\end{definition}
By \cite[Proposition 2.5]{Shoemaker_2018}, pullback and pushforward via a proper map each preserve the compact type subspace  of $ H^*_{\CR}(X)$, therefore the pushforward in the right hand side of \eqref{Equation: nonproper invariants} lies in $H^*_{\CR, \ct}(X)$ as desired.

Let $\{T_i\}_{i\in I} \subseteq H^*_{\CR,\cs}(X)$ denote the dual basis to $\{T^i\}_{i\in I}$ in compactly supported cohomology.  Whether or not $X$ is proper, the expression $$\tilde{ev}_{k*} \left( \prod_{i=1}^{k-1} ev_i^*(\alpha_i) \psi_i^{a_i} \cup \psi_k^{a_k} \cap \left[Q^{\varepsilon}_{0,k}(X,\beta)\right]^{\vir} \right)$$ 
is equal to
$$\sum_{i\in I} \left\langle \alpha_1 \psi_1^{a_1} , \ldots , T_i \psi_k^{a_k} \right\rangle_{g,\beta}^{X,\varepsilon} T^i.$$

\begin{definition} \parencite[\S 2]{Ciocan-Fontanine_Kim_Higher_genus_wall-crossing_2017}
The \emph{$\varepsilon$-quantum product} of $\alpha_1$ and $\alpha_2$, written $\alpha_1  \bullet^{X,\varepsilon}_{\boldsymbol{t}} \nolinebreak \alpha_2 $, is the sum
\begin{equation*}
    \alpha_1 \bullet^{X,\varepsilon}_{\boldsymbol{t}} \alpha_2 := \sum_{i\in I} \langle\langle \alpha_1, \alpha_2, T_i \rangle\rangle^{X,\varepsilon}_{0} (\boldsymbol{t}) T^i .
\end{equation*}
\end{definition}

\begin{definition}
Let $z$ be a formal variable. The \emph{$\varepsilon$-Dubrovin connection} on $H^*_{\CR}(X)\otimes \C[[t_i,q]]_{i\in I}[z,z^{-1}]$ is defined by 
\begin{equation}
\label{Equation: Dubrovin Connection}
    \nabla_i^{X,\varepsilon} = \frac{\partial}{\partial t_i} + \frac{1}{z} T_i \bullet^{X,\varepsilon}_{\boldsymbol{t}} .
\end{equation}
\end{definition}

 Define the operator $L^{X,\varepsilon}(\boldsymbol{t},z)$ by
\begin{equation}
\label{Equation: General Fundamental Solution}
    L^{X,\varepsilon}(\boldsymbol{t},z)(\alpha) := \alpha + \sum_{i\in I} \left\langle\left\langle \frac{\alpha}{-z-\psi} , T_i \right\rangle\right\rangle^{X,\varepsilon}_0 (\boldsymbol{t}) T^i ,
\end{equation}
for all $\alpha\in H^*_{\CR}(X)$.

\begin{proposition}
\label{Proposition: Fundamental solution to the quantum differential equation}
The Dubrovin connection $\nabla^{X,\varepsilon}$ is flat, with fundamental solution given by 
the operator $L^{X,\varepsilon}(\boldsymbol{t},z)$. For $i\in I$ and $\alpha\in H^*_{\CR}(X)$ we have the equality
\begin{equation*}
    \nabla^{X,\varepsilon}_i\left( L^{X,\varepsilon}(\boldsymbol{t},z)(\alpha) \right) = 0 .
\end{equation*}
\end{proposition}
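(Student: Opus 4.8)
The plan is to carry over verbatim the standard argument from Gromov--Witten theory. It reduces to two structural inputs for the genus-zero $\varepsilon$-quasimap invariants: the WDVV equations (equivalently, associativity of $\bullet^{X,\varepsilon}_{\boldsymbol t}$) and the genus-zero topological recursion relations (TRR). Both are proved in exactly the same way as in Gromov--Witten theory, by pulling back relations among $\psi$- and boundary divisor classes on $\overline{M}_{0,n}$ along the contraction morphism $Q^{\varepsilon}_{0,n}(X,\beta)\to\overline{M}_{0,n}$ (defined for $n\geq 3$); I would either invoke the forms of these already recorded in the quasimap literature (cf. \cite{Cheong-Ciocan-Fontanine-Kim_2015, Ciocan-Fontanine_Kim_Higher_genus_wall-crossing_2017}) or establish them directly. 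The argument below is insensitive to whether $X$ is proper, working throughout with the compact-type double brackets of Definition~\ref{nonproper brackets}.

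Granting these, flatness is the classical Frobenius-manifold computation: writing $\nabla^{X,\varepsilon}_i = \partial_{t_i} + \frac1z A_i$ with $A_i := T_i\bullet^{X,\varepsilon}_{\boldsymbol t}$, one has $[\nabla^{X,\varepsilon}_i,\nabla^{X,\varepsilon}_j] = \frac1z(\partial_{t_i}A_j - \partial_{t_j}A_i) + \frac1{z^2}[A_i,A_j]$; the order-$z^{-2}$ term vanishes since $\bullet^{X,\varepsilon}_{\boldsymbol t}$ is commutative and associative (associativity being WDVV), so the $A_i$ commute, and the order-$z^{-1}$ term vanishes since $\partial_{t_i}$ merely inserts one more marked point into the double brackets defining $A_j$, and the permutation symmetry of the invariants together with the flatness of the Poincar\'e pairing makes the result symmetric in $i$ and $j$. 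Thus $\nabla^{X,\varepsilon}$ is flat.

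For the identity $\nabla^{X,\varepsilon}_i\big(L^{X,\varepsilon}(\boldsymbol t,z)(\alpha)\big) = 0$, I would expand $\frac{\alpha}{-z-\psi}$ as a geometric series in $\psi/z$ and differentiate, obtaining $\partial_{t_i}L^{X,\varepsilon}(\boldsymbol t,z)(\alpha) = \sum_j\langle\langle\frac{\alpha}{-z-\psi}, T_j, T_i\rangle\rangle^{X,\varepsilon}_0\,T^j$. Applying the genus-zero TRR to each term strips off one power of $\psi$ at the cost of a degeneration of the source curve --- that is, of a node and a sum over a dual basis --- and re-summing the geometric series collapses the whole expression to $-\frac1z\,T_i\bullet^{X,\varepsilon}_{\boldsymbol t}\big(\alpha + \sum_e\langle\langle\frac{\alpha}{-z-\psi},T_e\rangle\rangle^{X,\varepsilon}_0 T^e\big) = -\frac1z\,T_i\bullet^{X,\varepsilon}_{\boldsymbol t}L^{X,\varepsilon}(\boldsymbol t,z)(\alpha)$, which is precisely the assertion $\nabla^{X,\varepsilon}_i(L^{X,\varepsilon}(\boldsymbol t,z)(\alpha)) = 0$. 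Since $L^{X,\varepsilon}(\boldsymbol t,z)$ equals the identity modulo positive powers of $q$ and of the $t_i$, it is invertible, hence a genuine fundamental solution.

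The only non-formal point --- and thus the main obstacle --- is the genus-zero TRR (and WDVV) for $\varepsilon$-stable quasimaps. Unlike in Gromov--Witten theory there is in general no forgetful map $Q^{\varepsilon}_{0,n+1}(X,\beta)\to Q^{\varepsilon}_{0,n}(X,\beta)$, and the $\psi_i$ appearing here are first Chern classes of cotangent lines of the coarse \emph{source} curve rather than pullbacks from $\overline{M}_{0,n}$; so one must compare these classes under the contraction morphism and control the splitting of the virtual class over the boundary of $Q^{\varepsilon}_{0,n}(X,\beta)$ --- or, more efficiently, cite the versions of these identities already available for quasimap invariants. With those in hand, the calculation proceeds exactly as in Gromov--Witten theory.
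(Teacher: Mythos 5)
Your proposal is correct and follows essentially the same route as the paper: the paper's proof simply declares the argument identical to the Gromov--Witten case of Cox--Katz and Iritani, with the sole non-formal input being the topological recursion relation for quasimaps, cited from \parencite[Corollary~2.3.4]{Ciocan-Fontanine_Kim_Higher_genus_wall-crossing_2017} --- exactly the reduction you identify and the reference you propose to invoke. Your write-up just makes the standard Frobenius-manifold and geometric-series computations explicit, which the paper leaves implicit.
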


\begin{proof}
The proof is identical to the Gromov-Witten theory case in \cite{Cox_Katz_Mirror_symmetry_and_algebraic_geometry_1999,Iritani-Quantum_Cohomology_and_periods_2011}. The key ingredient is the topological recursion relation for quasimaps, which appears in \parencite[Corollary~2.3.4]{Ciocan-Fontanine_Kim_Higher_genus_wall-crossing_2017}.
\end{proof}

Denote by $L^{X,\varepsilon}(z)$ the restriction of $L^{X,\varepsilon}(\boldsymbol{t},z)$ to $\boldsymbol{t}=\boldsymbol{0}$:
\begin{align}
\label{Equation: Fundamental Solution}
    L^{X,\varepsilon}(z)(\alpha) := L^{X,\varepsilon}(\boldsymbol{0},z)(\alpha) &= \alpha + \sum_{i\in I} \sum_{\beta\in\Eff} q^\beta \left\langle \frac{\alpha}{-z-\psi} , T_i \right\rangle^{X,\varepsilon}_{0,\beta} T^i  \\
    &= \alpha + \sum_{\beta\in \Eff} q^\beta \tilde{ev}_{2*} \left( \frac{ev_1^*(\alpha)}{-z-\psi_1} \cap \left[ Q_{0,2}^{\varepsilon}(X,\beta) \right]^{\vir} \right) . \nonumber
\end{align}
The operator $L^{X,\varepsilon}(z)$ records all two-pointed genus-zero $\varepsilon$-stable quasimap invariants with one primary insertion and one descendants insertion. In this paper we restrict our attention to $L^{X,0+}(z)$.

% --------------------------------------------------------------
%           Two-pointed genus-zero quasimaps
% --------------------------------------------------------------

\section{Two-pointed genus-zero quasimaps}
\label{Section: Two-pointed genus-zero quasimaps}
In this section we define weak forms of convexity and concavity and show that both are equivalent. We also prove that weakly semi-positive vector bundles $\mathcal{E} \to \mathfrak{X}$ (Definition~\ref{Definition: Weakly semi-positive}) are weakly convex.

%%%%%%%%%%  Source curves  %%%%%%%%%%
\subsection{Source curves}
\label{Section: Source curves}

Here we describe the source curves of two-pointed genus-zero $0+$-stable quasimaps.
\begin{lemma}
\label{Lemma: Source curves are chains of smooth curves with exactly two special points}
For a point $(C,x_1,x_2,[u])$ in $Q^{0+}_{0,2}(X,\beta)$, the underlying coarse curve $(\underline{C},\underline{x}_1,\underline{x}_2)$ is an at worst nodal curve such that each irreducible component has exactly two special points. Furthermore, the degree of $[u]$ is positive on every rational component.
\end{lemma}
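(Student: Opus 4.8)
The plan is to analyze the structure of a $0+$-stable quasimap directly from the stability condition. First I would recall that for a genus-zero prestable quasimap $(C,x_1,x_2,[u])$, the underlying coarse curve $\underline{C}$ is a connected nodal curve of arithmetic genus zero, hence a tree of $\mathbb{P}^1$'s. The $0+$-stability condition requires that the $\mathbb{Q}$-line bundle $\omega_{\underline{C}}(\underline{x}_1+\underline{x}_2)\otimes(\varphi_*([u]^*\mathcal{L}_\theta^{\otimes\mathbf{e}}))^{\varepsilon}$ be ample on $\underline{C}$ for every small rational $\varepsilon>0$, which amounts to saying its restriction to each irreducible component $\underline{C}_v$ has positive degree.

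The key computation is the degree of this line bundle on a component $\underline{C}_v$. If $\underline{C}_v$ carries $n_v$ special points (nodes plus markings among $x_1,x_2$), then $\deg\left(\omega_{\underline{C}}(\underline{x}_1+\underline{x}_2)|_{\underline{C}_v}\right) = -2 + n_v$, while the second factor contributes $\varepsilon\mathbf{e}\,\beta_v(\mathcal{L}_\theta)\ge 0$, where $\beta_v$ is the class of the restricted quasimap, using that the degree of a quasimap with respect to $\mathcal{L}_\theta$ is nonnegative (it extends to a genuine map to $X$ away from finitely many base points, and $\mathcal{L}_\theta$ is the polarization). Thus ampleness for all small $\varepsilon>0$ forces either $n_v\ge 3$, or $n_v=2$ together with $\beta_v(\mathcal{L}_\theta)>0$, or $n_v=2$ with $\beta_v(\mathcal{L}_\theta)=0$ but this last case is excluded: a degree-zero prestable quasimap on a two-pointed rational component is constant on $\underline{C}_v$, contributing $-2+2=0$ to the total degree and making the bundle not ample. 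So every component has $n_v\ge 2$, and a component with $n_v=2$ must be rational with positive degree.

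Next I would rule out $n_v\ge 3$. Since $\underline{C}$ is a tree and $\underline{x}_1,\underline{x}_2$ are the only markings, suppose some component $\underline{C}_v$ has $n_v\ge 3$ special points. Counting: in a tree with two leaves (the markings) at most the two endpoint components can have valence-plus-marking count equal to $2$, and every internal vertex of a tree path has valence $\ge 2$; a vertex of valence $\ge 3$ in the tree would create at least three branches, each of which must eventually terminate at a marking or a leaf. But there are only two markings, and a leaf component with no marking has $n_v=1$, contradicting the bound $n_v\ge 2$ established above. Making this precise is the main obstacle: one needs the combinatorial lemma that a finite tree in which every vertex has degree $\ne 1$ (equivalently no valence-one vertices once the two markings are attached) and which has exactly two "marked leaves" must in fact be a chain, i.e. a path, with every vertex of valence $\le 2$. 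I would argue by induction on the number of components, or by a Euler-characteristic count: $\sum_v(n_v-2) = 2\cdot\#\{\text{nodes}\} + 2 - 2\cdot\#\{\text{components}\} = -2 + 2 = 0$ (using that a tree with $N$ components has $N-1$ nodes, and the two markings contribute $+2$), so since every term $n_v-2\ge 0$, every term must vanish, forcing $n_v=2$ for all $v$. This immediately gives that $\underline{C}$ is a chain with two special points per component.

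Finally, combining the two parts: every irreducible component has exactly two special points, and for each such component the $0+$-stability inequality $-2+2+\varepsilon\mathbf{e}\,\beta_v(\mathcal{L}_\theta)>0$ for all small $\varepsilon>0$ forces $\beta_v(\mathcal{L}_\theta)>0$, i.e. the degree of $[u]$ is positive on every (rational) component; since all components are rational here, the degree is positive on every component. I would also remark that $\underline{C}$ being a chain means its dual graph is a line segment, so the nodes are arranged linearly between $\underline{x}_1$ and $\underline{x}_2$. The only subtlety to be careful about is the orbifold structure: stability is phrased on the coarse curve $\underline{C}$ via the rigidification $\varphi$, and $\varphi_*([u]^*\mathcal{L}_\theta^{\otimes\mathbf{e}})$ is an honest line bundle on $\underline{C}$ by the choice of $\mathbf{e}$, so all degree computations take place on the scheme $\underline{C}$ and the argument above applies verbatim.
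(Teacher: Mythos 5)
Your proof is correct and follows essentially the same route as the paper: the $0+$-stability condition rules out components with fewer than two special points, tree combinatorics then forces exactly two special points on every component, and the two-special-point case of the stability inequality yields positive degree of $[u]$ on each component. The only (cosmetic) difference is that the paper phrases the combinatorial step as ``unmarked one-node components are forbidden, so with only two markings the tree is a chain,'' whereas you close it with the explicit count $\sum_v (n_v-2) = 2\,\#\{\text{nodes}\} + 2 - 2\,\#\{\text{components}\} = 0$, which if anything makes that step more precise.
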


\begin{proof}
For a point $(C,x_1,x_2,[u])$ in $Q^{0+}_{0,2}(X,\beta)$, the dual graph of its source curve $(C,x_1,x_2)$ is a tree because the curve is genus zero. The stability condition \eqref{Equation: Quasimap stability condition} states that the line bundle
\begin{equation*}
    \omega_{\underline{C}} \left( \underline{x}_1 + \underline{x}_2 \right) \otimes \left( \phi_*( [u]^*\mathcal{L}_\theta^{\otimes \mathbf{e}} ) \right)^{\varepsilon} 
\end{equation*}
has positive degree on each rational component of the underlying coarse curve $(\underline{C},\underline{x}_1,\underline{x}_2)$ for all $\varepsilon>0$. Hence, every rational component of $(\underline{C},\underline{x}_1,\underline{x}_2)$ with a single node must contain a marked point. Since there are only two marked points, there are at most two rational components with a single node. That is, $(\underline{C},\underline{x}_1,\underline{x}_2)$ must be a chain of rational components with marked points on the terminal components. 

Therefore each rational component of $\underline{C}$ has exactly two special points. The stability condition then implies that the degree of $[u]$ must be positive on every component of $(C,x_1,x_2)$.
\end{proof}

\begin{figure}[h!]
    \centering
    \begin{tikzpicture}
    % smooth curve
    \draw (-5,0) -- (-3,0);
    \filldraw[black] (-4.5,0) circle (1pt) node {};
    \filldraw[black] (-3.5,0) circle (1pt) node {};

    % one node
    \draw (-1.5,-.5) -- (-.5,.5);
    \draw (-1,.5) -- (0,-.5);
    \filldraw[black] (-1.25,-.25) circle (1pt) node {};
    \filldraw[black] (-.25,-.25) circle (1pt) node {};

    % two nodes
    \draw (1.45,.25) -- (2.8,.25);
    \draw (1.75,-.5) -- (1.75,.5);
    \draw (2.5,-.5) -- (2.5,.5);
    \filldraw[black] (1.75,-.25) circle (1pt) node {};
    \filldraw[black] (2.5,-.25) circle (1pt) node {};

    \draw (3.75,0) node {\Large{$\cdots$}};

    \end{tikzpicture}
    \caption{The underlying coarse curves $(\underline{C},\underline{x}_1,\underline{x}_2)$.}
    \label{Figure: source curves}
\end{figure}

For integers $c,d$, denote by $\CP_{[c,d]}$ a smooth twisted curve whose coarse space is $\CP^1$ with two marked points such that the isotropy groups at the marked points are $\mu_c$ and $\mu_d$. By Lemma~\ref{Lemma: Source curves are chains of smooth curves with exactly two special points}, each component of a source curve in $Q_{0,2}^{0+}(X,\beta)$ is given by some $\CP_{[c,d]}$. 

Denote by $l$, $a$, and $b$ the integers $l=\gcd(c,d)$, $a=c/l$, and $b=d/l$. We can rewrite $l$ as the product of integers $l_1$ and $l_2$ such that $\gcd(l_1,l_2)=1$, $\gcd(l_1,b)=1$, and $\gcd(l_2,a)=1$. In the next lemma we give a uniform way of expressing $\CP_{[c,d]}$ as a quotient stack.

\begin{lemma}\label{Lemma: two-pointed smooth twisted curves are isomorphic to a GIT quotient}
Let $\chi:\mu_{l_1} \times \mu_{l_2} \times \C^* \to \C^*$ be the character given by 
\begin{equation*}
    \chi\left( e^{2\pi i \frac{m_1}{l_1}},e^{2\pi i \frac{m_2}{l_2}},\lambda \right)=\lambda .
\end{equation*}
The twisted curve $\CP_{[c,d]}$ may be represented as a GIT stack quotient
\begin{equation}
\label{Equation: Irreducible Component}
    \left[ \C^2 \sslash_{\hspace{-2pt}\chi} (\mu_{l_1} \times \mu_{l_2} \times \C^*) \right] 
\end{equation}
with action 
\begin{equation*}
    \left( e^{2\pi i \frac{m_1}{l_1}},e^{2\pi i \frac{m_2}{l_2}},\lambda \right) \cdot \left( x,y \right) = \left( \lambda^a x e^{2\pi i \frac{m_1}{l_1}}, \lambda^b y e^{2\pi i \frac{m_2}{l_2}} \right) ,
\end{equation*}
for $0\leq m_1 < l_1$, $0\leq m_2 < l_2$, $\lambda\in\C^*$, and $(x,y)\in\C^2$.
\end{lemma}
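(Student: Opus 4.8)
\emph{Proof proposal.} Write $H:=\mu_{l_1}\times\mu_{l_2}\times\C^*$ for the group in \eqref{Equation: Irreducible Component}, acting on $\C^2$ by the formula in the statement. The plan is to show that $[\C^2\sslash_\chi H]$ is a smooth, proper, one-dimensional Deligne--Mumford stack --- hence a twisted curve --- whose coarse space is $\CP^1$, with trivial generic isotropy and exactly two orbifold points, of orders $c$ and $d$; such a twisted curve is unique up to isomorphism (it is, for instance, the $c$-th root stack of $\CP^1$ at one point composed with the $d$-th root stack at another), and it is what is denoted $\CP_{[c,d]}$.

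First I would determine the semistable locus. Let $R=\bigoplus_{n\ge0}R_n$ be the ring of $\chi$-semi-invariant regular functions on $\C^2$, so that $\C^2\sslash_\chi H=\Proj R$ is the coarse space of $[\C^2\sslash_\chi H]$. Invariance under $\mu_{l_1}\times\mu_{l_2}$ forces every monomial appearing in $R_n$ to be of the form $x^iy^j$ with $l_1\mid i$ and $l_2\mid j$, and the $\C^*$-weight then reads $n=ai+bj$; since $a,b>0$ this also shows $R_0=\C$. The semi-invariants $x^{l_1}\in R_{al_1}$ and $y^{l_2}\in R_{bl_2}$ are nonzero off the two coordinate axes and off the origin respectively, so every point of $\C^2\setminus\{0\}$ is $\chi$-semistable, while every positive-weight semi-invariant vanishes at the origin; hence $W^{ss}(\chi)=\C^2\setminus\{0\}$. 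A standard argument --- the $\C^*$-weights are positive, so the semistable orbits are closed in $\C^2\setminus\{0\}$ and have finite stabilizer --- gives $W^{ss}(\chi)=W^s(\chi)$. Thus $[\C^2\sslash_\chi H]$ is a smooth, one-dimensional Deligne--Mumford stack; it is proper because its coarse space is (computed next), connected because its coarse space is, and a scheme away from its orbifold points because there the stabilizers are trivial. So it is a smooth twisted curve.

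Next I would compute the coarse space and the isotropy. Setting $u=x^{l_1}$ and $v=y^{l_2}$ identifies $R$ with $\C[u,v]$ graded so that $\deg u=al_1$ and $\deg v=bl_2$; the three hypotheses on $l_1,l_2$ together with $\gcd(a,b)=1$ give $\gcd(al_1,bl_2)=1$, and therefore $\Proj R\cong\CP^1$. For the isotropy: the stabilizer of a point $(x_0,0)$ with $x_0\ne0$ is $\{(\lambda^{-a},\zeta_2,\lambda):\lambda^{al_1}=1,\ \zeta_2\in\mu_{l_2}\}\cong\mu_{al_1}\times\mu_{l_2}$, which is cyclic of order $al_1l_2=al=c$ because $\gcd(al_1,l_2)=1$; symmetrically the stabilizer at $(0,y_0)$ with $y_0\ne0$ is $\cong\mu_{bl_2}\times\mu_{l_1}$, cyclic of order $bl_1l_2=bl=d$; and at a point with both coordinates nonzero the stabilizer lies in $\mu_{al_1}\cap\mu_{bl_2}=\mu_{\gcd(al_1,bl_2)}=\{1\}$. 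So $[\C^2\sslash_\chi H]$ has exactly two orbifold points, lying over the two coordinate points of $\CP^1$, with isotropy $\mu_c$ and $\mu_d$, and trivial isotropy elsewhere. Together with the first step and the uniqueness of such twisted curves, this identifies $[\C^2\sslash_\chi H]$ with $\CP_{[c,d]}$.

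I expect the only genuine obstacle to be the arithmetic bookkeeping: one must verify that the three coprimality conditions defining $l_1$ and $l_2$ are exactly what is needed for each stabilizer subgroup to be \emph{cyclic} of the correct order (a product $\mu_{al_1}\times\mu_{l_2}$ is cyclic only when its factors have coprime orders), and for $\gcd(al_1,bl_2)=1$, which is what in turn forces the coarse space to be a smooth $\CP^1$ and the generic isotropy to vanish (rather than equal $\mu_l$, as it would for the naive presentation $[\C^2\sslash_\chi\C^*]$ with $\C^*$ acting by weights $(c,d)$). The remaining ingredients --- that $\Proj$ of a two-variable polynomial ring with coprime weights is $\CP^1$, that the quotient of a smooth variety by a group acting with finite stabilizers is a smooth Deligne--Mumford stack, and the uniqueness up to isomorphism of a smooth twisted $\CP^1$ with prescribed orbifold orders --- are standard.
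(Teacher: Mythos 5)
Your proposal is correct and follows essentially the same route as the paper: determine that the $\chi$-(semi)stable locus is $\C^2\setminus\{0\}$, compute that the stabilizers at $(1,0)$ and $(0,1)$ are cyclic of orders $c$ and $d$ while the generic stabilizer is trivial (using the coprimality conditions on $l_1,l_2,a,b$), and then identify the resulting smooth twisted curve with $\CP_{[c,d]}$. The only difference is bookkeeping: you make explicit the semi-invariant/$\Proj$ computation of the semistable locus and coarse space, which the paper dismisses with ``one easily checks,'' and you reach trivial generic isotropy via $\gcd(al_1,bl_2)=1$ rather than the paper's direct divisibility argument on $m_1,m_2$.
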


\begin{proof}
It suffices to show that the orbifold curve \eqref{Equation: Irreducible Component} contains exactly two orbifold points with isotropy groups of orders $c$ and $d$. 

We first calculate the generic isotropy. Fix $x,y\neq0$ and let $\left(e^{2\pi i \frac{m_1}{l_1}}, e^{2\pi i \frac{m_2}{l_2}}, \lambda\right)$ be an element of $\mu_{l_1}\times \mu_{l_2}\times \C^*$ which fixes $(x,y)$. Then we have
\begin{equation}
    \label{Equation: proof of two-pointed smooth twisted curves are isomorphic to a GIT quotient - equation 1}
    \lambda^a e^{2\pi i \frac{m_1}{l_1}} = 1    
\end{equation}
and
\begin{equation}
    \label{Equation: proof of two-pointed smooth twisted curves are isomorphic to a GIT quotient - equation 2}
    \lambda^b e^{2\pi i \frac{m_2}{l_2}} = 1 .    
\end{equation}
Equality \eqref{Equation: proof of two-pointed smooth twisted curves are isomorphic to a GIT quotient - equation 1} implies $\lambda=e^{2\pi i \frac{l_1n-m_1}{al_1}}$ for some integer $n$. So, the product $\lambda^b e^{2\pi i \frac{m_2}{l_2}}$ equals $e^{2\pi i \left( \frac{m_2}{l_2} + \frac{bl_1n-bm_1}{al_1} \right) }$ and \eqref{Equation: proof of two-pointed smooth twisted curves are isomorphic to a GIT quotient - equation 2} implies
\begin{equation*}
    \frac{m_2}{l_2} + \frac{bl_1n-bm_1}{al_1} = \frac{al_1m_2+bl_1l_2n-bl_2m_1}{al_1l_2}
\end{equation*}
is an integer. 

If $al_1l_2$ divides $al_1m_2+bl_1l_2n-bl_2m_1$, then $l_2$ divides $al_1m_2$. However, the greatest common divisor of $al_1$ and $l_2$ is one, and $m_2$ is a non-negative integer strictly less than $l_2$. Thus, $m_2$ must equal $0$. Similar reasoning shows $m_1$ also equals $0$.

In this case, \eqref{Equation: proof of two-pointed smooth twisted curves are isomorphic to a GIT quotient - equation 1} and \eqref{Equation: proof of two-pointed smooth twisted curves are isomorphic to a GIT quotient - equation 2} reduce to the equation $\lambda^a=\lambda^b=1$. By assumption, the greatest common divisor of $a$ and $b$ is one. Therefore \eqref{Equation: proof of two-pointed smooth twisted curves are isomorphic to a GIT quotient - equation 1} holds if and only if $\left( e^{2\pi i \frac{m_1}{l_1}},e^{2\pi i \frac{m_2}{l_2}},\lambda \right)$ is the identity.

An element $\left( e^{2\pi i \frac{m_1}{l_1}},e^{2\pi i \frac{m_2}{l_2}},\lambda \right)$ in $\mu_{l_1} \times \mu_{l_2} \times \C^*$ fixes $(1,0)$ if and only if $\lambda^a e^{2\pi i \frac{m_1}{l_1}}=1$. Hence, $\mu_{l_2}$ fixes $(1,0)$ and the subgroup of $\mu_{l_1}\times\C^*$ which fixes $(1,0)$ is
\begin{equation}
\label{e: subgroup which fixes (1,0)}
    \left\langle \left( e^{-2\pi i \frac{1}{l_1}} , e^{2\pi i \frac{1}{al_1}} \right) \right\rangle \cong \mu_{al_1} .
\end{equation}
This gives us the following isotropy group:
\begin{equation*}
    G_{(1,0)} \cong \mu_{al_1} \times \mu_{l_2} \cong \mu_{al_1l_2} = \mu_{c} .
\end{equation*}
The second isomorphism follows from the fact that $\gcd(al_1,l_2)=1$. The final equality results from the equalities $l=l_1l_2$ and $a=c/l$.

An identical argument shows that the order of the isotropy group at $(0,1)$ is $d$. One easily checks that the $\chi$-stable locus is $\C^2\setminus\{(0,0)\}$ as desired.
\end{proof}

The GIT stack quotient $\left[ \C^2 \sslash_{\hspace{-2pt}\chi} (\mu_{l_1} \times \mu_{l_2} \times \C^*) \right]$ may be written as a global quotient of a weighted projective space by a finite cyclic group. Recall that $l=l_1l_2$ and $\gcd(l_1,l_2)=1$. We will sometimes write $\left[ \C^2 \sslash_{\hspace{-2pt}\chi} (\mu_{l_1} \times \mu_{l_2} \times \C^*) \right]$ simply as $\CP(a,b)/\mu_l$, where the action of $\mu_l=\mu_{l_1}\times\mu_{l_2}$ is understood to be as described.

\begin{corollary}\label{Lemma: Orbifold Source Curves}
Let $(C,x_1,x_2)$ be a source curve of a two-pointed genus-zero $0+$-stable quasimap to a GIT stack quotient. Then the rational components of $(C,x_1,x_2)$ are isomorphic to a quotient of a weighted projective $\CP(a,b)/\mu_l$ as above.
\end{corollary}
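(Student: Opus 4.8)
The plan is to assemble this directly from the two preceding results. First I would invoke Lemma~\ref{Lemma: Source curves are chains of smooth curves with exactly two special points}: for any $(C,x_1,x_2,[u])$ in $Q^{0+}_{0,2}(X,\beta)$, the coarse curve $\underline C$ is a chain of $\CP^1$'s and each rational component of the twisted curve $C$ has exactly two special points. So it suffices to understand a single smooth rational component with two special points.

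Next I would observe that the two special points of such a component are either marked gerbes or nodal gerbes, and in both cases carry cyclic isotropy. For a marked point this is part of the definition of a twisted curve (étale-locally $[\Spec \C[x]/\mu_r]$), and at a node a twisted curve is étale-locally the balanced quotient $[\{xy=0\}/\mu_r]$ with $\mu_r$ acting by $(x,y)\mapsto(\zeta x,\zeta^{-1}y)$; in particular each special point has cyclic isotropy, say of orders $c$ and $d$. A smooth twisted curve with coarse space $\CP^1$ and exactly two orbifold points with cyclic isotropy groups of orders $c$ and $d$ is, by definition, the curve $\CP_{[c,d]}$ introduced just before Lemma~\ref{Lemma: two-pointed smooth twisted curves are isomorphic to a GIT quotient}.

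Finally I would apply Lemma~\ref{Lemma: two-pointed smooth twisted curves are isomorphic to a GIT quotient}, which identifies $\CP_{[c,d]}$ with the GIT stack quotient $\bigl[\C^2\sslash_\chi(\mu_{l_1}\times\mu_{l_2}\times\C^*)\bigr]$, together with the remark immediately following it identifying this quotient with $\CP(a,b)/\mu_l$ for $l=\gcd(c,d)$, $a=c/l$, $b=d/l$, and the chosen factorization $l=l_1l_2$. Composing these identifications over each rational component of $(C,x_1,x_2)$ gives the claim. There is no real obstacle here — the corollary is essentially a repackaging of Lemmas~\ref{Lemma: Source curves are chains of smooth curves with exactly two special points} and~\ref{Lemma: two-pointed smooth twisted curves are isomorphic to a GIT quotient}; the only point requiring a word of justification is that the special points of a twisted curve carry cyclic isotropy, which is standard.
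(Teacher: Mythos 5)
Your proposal is correct and follows the same route as the paper, which simply deduces the corollary immediately from Lemma~\ref{Lemma: Source curves are chains of smooth curves with exactly two special points} and Lemma~\ref{Lemma: two-pointed smooth twisted curves are isomorphic to a GIT quotient}. The extra remark you include, that the special points of a twisted curve have cyclic isotropy so each smooth component is some $\CP_{[c,d]}$, is a standard fact the paper leaves implicit, so your write-up is just a slightly more detailed version of the same argument.
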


\begin{proof}
The result follows immediately from Lemmas~\ref{Lemma: Source curves are chains of smooth curves with exactly two special points} and \ref{Lemma: two-pointed smooth twisted curves are isomorphic to a GIT quotient}.
\end{proof}

We conclude our discussion of two-pointed genus-zero $(0+)$-stable quasimap source curves with a remark about line bundles over such curves.

\begin{remark}
\label{Remark: Line bundles over the source curves}
Let $(C,x_1,x_2,[u])$ be a quasimap in the moduli space $Q_{0,2}^{0+}(X,\beta)$. By Lemma~\ref{Lemma: Source curves are chains of smooth curves with exactly two special points}, Lemma~\ref{Lemma: two-pointed smooth twisted curves are isomorphic to a GIT quotient}, and \parencite[Proposition~2.2]{martens2012variations}, line bundles over $C$ restricted to an irreducible component are isomorphic to a GIT stack quotient
\begin{equation*}
    \left[ \left( \C^2\setminus\{\overline{0}\} \times \C \right) \sslash_{\hspace{-2pt}\chi} \left( \mu_{l_1} \times \mu_{l_2} \times \C^* \right) \right]    
\end{equation*}
with the action given by
\begin{equation*}
    \left( e^{2\pi i \frac{m_1}{l_1}}, e^{2\pi i \frac{m_2}{l_2}}, \lambda \right) \cdot (x,y,z) = \left( \lambda^a x e^{2\pi i \frac{m_1}{l_1}}, \lambda^b y e^{2\pi i \frac{m_2}{l_2}}, \lambda^d z e^{2\pi i \frac{k_1 l_2 m_1 + k_2 l_1 m_2}{l_1l_2}} \right) ,
\end{equation*}
for some integers $k_1$, $k_2$, and $d$. Denote this GIT quotient by $\mathcal{O}_{\CP(a,b)/\mu_l}^{k_1,k_2}(d)$. We omit the superscripts $k_1$ and $k_2$ when $l$ equals $1$.
\end{remark}

%%%%%%%%%% Weak convexity %%%%%%%%%%

\subsection{Weak convexity} 
\label{Section: Weak convexity}

We now define (weak) semi-positivity for a vector bundle over $\mathfrak{X}$ and prove that it implies a weak form for convexity.

\begin{definition}
\label{Definition: Semi-positive}
For a character $\tau:G\to \C^*$ with one-dimensional representation $\C_{\tau}$, denote by $\mathcal{L}=[W\times\C_\tau/G]$ the corresponding line bundle over $\mathfrak{X}$. We say $\mathcal{L}$ is \emph{positive} if $\beta(\mathcal{L}) > 0$ for all $\beta\in \Eff(W,G,\theta)$ and \emph{semi-positive} if $\beta(\mathcal{L})\geq 0$ for all $\beta\in \Eff(W,G,\theta)$.

A vector bundle $\mathcal{E}$ over $\mathfrak{X}$ is \emph{positive} (\emph{semi-positive}) if it splits as the direct sum of positive (semi-positive) line bundles.
\end{definition}

\begin{remark}
Our definition of a positive (semi-positive) line bundle $\mathcal{L}$ agrees with the definition in \parencite[\S 6.2]{Ciocan_Fontanine_Kim_Maulik_Stable_quasimaps_to_GIT_quotients_2014} of a positive (semi-positive) character~$\tau$.
\end{remark}

The authors of \cite{martens2012variations} generalize the Birkhoff-Grothendieck theorem to orbifolds whose coarse space is $\CP^1$ with only two points with nontrivial isotropy and chains of projective lines meeting at nodal singularities. These are exactly the source curves of $Q_{0,2}^{0+}(X,\beta)$ that we are considering.

Fix a quasimap $(C,x_1,x_2,[u]) \in Q_{0,2}^{0+}(X,\beta)$. By Lemma~\ref{Lemma: Source curves are chains of smooth curves with exactly two special points} and \cite{martens2012variations}, vector bundles over $C$ split as line bundles. Hence, for any vector bundle $\mathcal{E}\to\mathfrak{X}$ as in Definition~\ref{Definition: Line and vector bundles}, %line bundles $\{\mathcal{L}_i\to C\}_{i=1}^r$
the pullback $[u]^*\mathcal{E}$ splits as the direct sum of line bundles $\oplus_{i=1}^r \mathcal{L}_i$ regardless of whether $\mathcal{E}$ splits.  This allows us to make the following definition:
\begin{definition}
\label{Definition: Weakly semi-positive}
The vector bundle $\mathcal{E}$ is \emph{weakly semi-positive} if, for any $\beta\in\Eff(W,G,\theta)$ and $(C,x_1,x_2,[u])\in Q_{0,2}^{0+}(X,\beta)$, the pullback $[u]^*\mathcal{E}$ splits as the direct sum of line bundles $\oplus_{i=1}^r \mathcal{L}_i$ such that 
$\deg(\mathcal{L}_i)\geq 0$ for all $1\leq i \leq r$.
%, $(C,x_1,x_2,[u])\in Q_{0,2}^{0+}(X,\beta)$, and $\beta\in\Eff(W,G,\theta)$.
\end{definition}

If $X$ is a smooth variety, convexity follows from semi-positivity of $E$.  This is no longer the case when $X$ is an orbifold (See Remark~\ref{r:conv} for more details).  In this section we consider a weaker notion, which we term weak convexity.  %We show that semi-positivity implies weak convexity.

\begin{definition}
\label{Definition: weak convexity}
A vector bundle $\mathcal{E}$ over $\mathfrak{X}$ is \emph{weakly convex} if $H^1(C,[u]^*\mathcal{E}(-x_2))$ vanishes for all $(C,x_1,x_2,[u])\in Q^{0+}_{0,2}(X,\beta)$ and $\beta\in \Eff(W,G,\theta)$.
\end{definition}

Assume the vector bundle $\mathcal{E}$ is weakly semi-positive. For a quasimap $(C,x_1,x_2,[u]) \in Q_{0,2}^{0+}(X,\beta)$, the group $H^1(C,[u]^*\mathcal{E})$ splits as
\begin{equation*}
    H^1(C,[u]^*\mathcal{E}) = \bigoplus_{i=1}^r H^1(C,\mathcal{L}_i) .
\end{equation*}

The line bundles $\mathcal{L}_i$ have non-negative degree by assumption.  Furthermore, each $\mathcal{L}_{i}$ has non-negative degree on every irreducible component of $C$.  Let $l$, $a$, and $b$ be as in \S \ref{Section: Source curves}. As noted in Remark~\ref{Remark: Line bundles over the source curves}, the restriction of $\mathcal{L}_{i}$ to an irreducible component of $C$ is then isomorphic to $\mathcal{O}_{\CP(a,b)/\mu_l}^{k_1,k_2}(d)$ for some integers $k_1$, $k_2$, and $d$ with $d\geq 0$.

We will show that $H^1(C,\mathcal{L})$ vanishes whenever $C$ is a two-pointed genus-zero $0+$-stable quasimap source curve and $\mathcal{L}\to C$ has non-negative degree on each irreducible component. We begin with the case that $C$ is smooth.

For Lemmas~\ref{Lemma: Convexity for P(a,b)/mu_l} and~\ref{Lemma: Global Sections of L and L|p}, let $(x,y)$ be the homogeneous coordinates of $\CP(a,b)/\mu_l$ and label $x_1=(1,0)$ and $x_2=(0,1)$. Also fix a non-negative integer $d$.

\begin{lemma}\label{Lemma: Convexity for P(a,b)/mu_l}
The cohomology group $H^1\left(\CP(a,b)/\mu_l,\mathcal{O}^{k_1,k_2}_{\CP(a,b)/\mu_l}(d)\right)$ vanishes.
\end{lemma}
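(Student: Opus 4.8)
The plan is to compute the cohomology of $\mathcal{O}^{k_1,k_2}_{\CP(a,b)/\mu_l}(d)$ directly from its description as a GIT quotient, using the identification in Remark~\ref{Remark: Line bundles over the source curves}. Recall that $\CP(a,b)/\mu_l = [\C^2 \setminus \{\overline 0\} / (\mu_{l_1} \times \mu_{l_2} \times \C^*)]$, and the total space of $\mathcal{O}^{k_1,k_2}_{\CP(a,b)/\mu_l}(d)$ is the GIT quotient of $(\C^2 \setminus \{\overline 0\}) \times \C$ by the stated action. First I would compute sections: $H^0(C, \mathcal{O}^{k_1,k_2}(d))$ is the space of polynomial functions $f(x,y) z^{-1}$, equivalently polynomials $f(x,y)$ in $x,y$ that are invariant under the full group action with the appropriate weight. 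Such monomials $x^p y^q$ must satisfy the $\C^*$-condition $ap + bq = d$ together with the two finite-group congruences $p \equiv k_1 \pmod{l_1}$ coming from the $\mu_{l_1}$-action and $q \equiv k_2 \pmod{l_2}$ from the $\mu_{l_2}$-action (with $p,q \geq 0$).

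Next I would compute $H^1$ via \v{C}ech cohomology with respect to the standard affine cover $\{x \neq 0\}$, $\{y \neq 0\}$ of $\C^2 \setminus \{\overline 0\}$, which descends to an affine cover of the orbifold $\CP(a,b)/\mu_l$. Since the cover has two elements, $H^1$ is the cokernel of the difference map from the sum of the two single-patch invariant sections to the invariant sections on the overlap $\{xy \neq 0\}$. On the overlap, sections are spanned by Laurent monomials $x^p y^q z^{-1}$ with $p,q \in \Z$ satisfying $ap + bq = d$ and the same two congruences mod $l_1$ and mod $l_2$; on $\{x \neq 0\}$ we require additionally $q \geq 0$, and on $\{y \neq 0\}$ we require $p \geq 0$. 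I would then argue that every such Laurent monomial on the overlap lies in the image: given $(p,q)$ with $ap+bq=d$ and the congruences, if $q \geq 0$ it comes from the first patch, if $p \geq 0$ from the second, and since $d \geq 0$ and $a,b > 0$, at least one of $p \geq 0$ or $q \geq 0$ must hold (if both were negative then $ap + bq < 0 \leq d$, a contradiction). Hence the difference map is surjective and $H^1 = 0$.

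The main technical point — and the step I would be most careful with — is making precise the dictionary between invariant (Laurent) monomials on the three loci and actual sections of the line bundle on the orbifold, in particular verifying that the congruence conditions mod $l_1$ and mod $l_2$ are exactly as claimed given the mixing term $e^{2\pi i (k_1 l_2 m_1 + k_2 l_1 m_2)/(l_1 l_2)}$ in the $\mu_{l_1}\times\mu_{l_2}$-action on $z$; here the coprimality relations $\gcd(l_1,l_2)=1$, $\gcd(l_1,b)=1$, $\gcd(l_2,a)=1$ ensure the two congruences decouple cleanly. I would also note that the orbifold structure does not introduce higher \v{C}ech cohomology because each patch $[\{x\neq 0\}/(\mu_{l_1}\times\mu_{l_2}\times\C^*)]$ is the quotient of an affine scheme by a linearly reductive group, hence has vanishing higher cohomology with coefficients in any coherent sheaf; so the two-term \v{C}ech complex genuinely computes the sheaf cohomology. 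Once surjectivity of the difference map is established as above, the vanishing of $H^1$ follows immediately.
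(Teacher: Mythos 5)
Your argument is correct, but it proceeds quite differently from the paper. The paper first treats the case $\mu_l=1$ by pushing $\mathcal{O}_{\CP(a,b)}(d)$ forward along the rigidification $\varphi:\CP(a,b)\to\CP^1$ and using orbifold Riemann--Roch to see that $\deg(\varphi_*\mathcal{O}_{\CP(a,b)}(d))\geq -1$, so $H^1$ vanishes on $\CP^1$ and hence on $\CP(a,b)$; the general case then follows by identifying $H^1\left(\CP(a,b)/\mu_l,\mathcal{O}^{k_1,k_2}(d)\right)$ with the $\mu_l$-invariant part of $H^1\left(\CP(a,b),\mathcal{O}_{\CP(a,b)}(d)\right)$. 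You instead work directly on the GIT chart presentation: a two-patch \v{C}ech (Mayer--Vietoris) computation with monomial bases, where $H^1$ is the cokernel of the restriction-difference map and surjectivity follows because any Laurent monomial $x^py^q$ with $ap+bq=d\geq 0$ has $p\geq 0$ or $q\geq 0$. Your two supporting points check out: the equivariance condition does decouple into $p\equiv k_1 \pmod{l_1}$ and $q\equiv k_2\pmod{l_2}$ (set $m_2=0$, then $m_1=0$), and in any case the exact form of the congruences is irrelevant since they are the same on the patches and the overlap; and the patches and overlap are quotients of affine schemes by the linearly reductive group $\mu_{l_1}\times\mu_{l_2}\times\C^*$, so they have no higher coherent cohomology and the two-term complex genuinely computes $H^1$. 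What each approach buys: the paper's route is short given rigidification and orbifold Riemann--Roch and avoids choosing coordinates, while yours is more elementary and explicit, gives the monomial description of $H^0$ essentially for free (which would also reprove the section-existence statement of Lemma~\ref{Lemma: Global Sections of L and L|p}), and handles the finite group and the $\C^*$-weight on an equal footing without a separate invariants step.
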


\begin{proof}
First consider the case $\mu_l$ is trivial. Then $\CP(a,b)/\mu_l$ is a weighted projective space $\CP(a,b)$ and $\mathcal{O}^{k_1,k_2}_{\CP(a,b)/\mu_l}(d)$ is simply $\mathcal{O}_{\CP(a,b)}(d)$. Let $\varphi$ denote the rigidification map from $\CP(a,b)$ to $\CP^1$. 

The points $x_1$ and $x_2$ are the only points of $\CP(a,b)$ with nontrivial isotropy. The generator of the isotropy group at $L|_{x_j}$ acts by multiplication by $e^{2\pi i\frac{w_j}{ab}}$ for some weight $0\leq w_j < ab$. Orbifold Reimann-Roch gives the following:
\begin{align*}
    \deg( \varphi_* \mathcal{O}_{\CP(a,b)}(d)) &= \frac{d}{ab} - \left( \frac{w_1}{ab} + \frac{w_2}{ab} \right) \\
    &> \frac{d}{ab} - 2 \\ 
    &> -2 .
\end{align*}

Since the degree of $\varphi_* \mathcal{O}_{\CP(a,b)}(d)$ is an integer no less than $-1$, we conclude that 
\begin{equation}
\label{Equation: H^1 vanishes when l=1}
    H^1(\CP(a,b),\mathcal{O}_{\CP(a,b)}(d)) = H^1(\CP^1,\varphi_*\mathcal{O}_{\CP(a,b)}(d)) = 0 .
\end{equation} 

Now let $\mu_l$ be an arbitrary finite cyclic group. We have the fiber diagram:
\begin{equation*}
\begin{tikzcd}
    \mathcal{O}_{\CP(a,b)}(d) \arrow[d] \arrow[r]
    & \mathcal{O}^{k_1,k_2}_{\CP(a,b)/\mu_l}(d) \arrow[d] \\
    \CP(a,b) \arrow[r]
    & \CP(a,b)/\mu_l .
\end{tikzcd}
\end{equation*}

So, $H^1\left(\CP(a,b)/\mu_l, \mathcal{O}^{k_1,k_2}_{\CP(a,b)/\mu_l}(d)\right)$ is the $\mu_l$-invariant part of $H^1\left(\CP(a,b), \mathcal{O}_{\CP(a,b)}(d)\right)$. Equation~\eqref{Equation: H^1 vanishes when l=1} implies $H^1\left(\CP(a,b), \mathcal{O}_{\CP(a,b)}(d)\right)$ vanishes when $d\geq 0$. Therefore $H^1\left(\CP(a,b)/\mu_l, \mathcal{O}^{k_1,k_2}_{\CP(a,b)/\mu_l}(d)\right)$ vanishes.
\end{proof}

\begin{lemma}\label{Lemma: Global Sections of L and L|p}
If the isotropy at $x_j$ acts nontrivially on the fibers of $\mathcal{O}^{k_1,k_2}_{\CP(a,b)/\mu_l}(d)$, then $H^0\left(x_j,\mathcal{O}^{k_1,k_2}_{\CP(a,b)/\mu_l}(d)|_{x_j}\right)$ vanishes. Otherwise, there exists a section of $\mathcal{O}^{k_1,k_2}_{\CP(a,b)/\mu_l}(d)$ that is nonzero at $x_j$ and zero at the other point.
\end{lemma}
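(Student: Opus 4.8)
The plan is to make the space $H^0\big(\CP(a,b)/\mu_l,\, \mathcal{O}^{k_1,k_2}_{\CP(a,b)/\mu_l}(d)\big)$ completely explicit in terms of monomials and then read off both assertions from the weights of the isotropy actions. Write $\Gamma=\mu_{l_1}\times\mu_{l_2}\times\C^*$. Using the presentations of Lemma~\ref{Lemma: two-pointed smooth twisted curves are isomorphic to a GIT quotient} and Remark~\ref{Remark: Line bundles over the source curves}, a global section of $\mathcal{O}^{k_1,k_2}_{\CP(a,b)/\mu_l}(d)$ is the same datum as a $\Gamma$-equivariant section of the projection $(\C^2\setminus\{\overline 0\})\times\C\to\C^2\setminus\{\overline 0\}$. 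Since the complement of $\C^2\setminus\{\overline 0\}$ in $\C^2$ has codimension two, such a section extends to a polynomial $g(x,y)$, and equivariance becomes the identity
\[
   g\big(\lambda^a x\,e^{2\pi i m_1/l_1},\ \lambda^b y\,e^{2\pi i m_2/l_2}\big)
   =\lambda^{d}\,e^{2\pi i(k_1 l_2 m_1+k_2 l_1 m_2)/(l_1 l_2)}\,g(x,y).
\]
Comparing monomials, I would conclude that $H^0$ has a basis given by those $x^p y^q$ with $p,q\ge 0$ satisfying
\[
   ap+bq=d,\qquad p\equiv k_1\pmod{l_1},\qquad q\equiv k_2\pmod{l_2}.
\]

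Next I would compute how the isotropy group acts on the fiber over $x_j$. For $x_1=(1,0)$, Lemma~\ref{Lemma: two-pointed smooth twisted curves are isomorphic to a GIT quotient} identifies $G_{x_1}\cong\mu_{al_1}\times\mu_{l_2}$, generated by $\big(e^{-2\pi i/l_1},1,e^{2\pi i/(al_1)}\big)$ and $\big(1,e^{2\pi i/l_2},1\big)$. Substituting these elements into the action on the $\C$-factor recorded in Remark~\ref{Remark: Line bundles over the source curves}, the first generator scales the fiber by $e^{2\pi i(d-ak_1)/(al_1)}$ and the second by $e^{2\pi i k_2/l_2}$. Hence $G_{x_1}$ acts trivially on the fiber precisely when $al_1\mid(d-ak_1)$ and $l_2\mid k_2$; symmetrically, $G_{x_2}$ acts trivially precisely when $bl_2\mid(d-bk_2)$ and $l_1\mid k_1$. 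Since $x_j$ is the gerbe $[\mathrm{pt}/G_{x_j}]$, one has $H^0\big(x_j,\mathcal{O}^{k_1,k_2}_{\CP(a,b)/\mu_l}(d)|_{x_j}\big)=(\text{fiber})^{G_{x_j}}$, which vanishes as soon as $G_{x_j}$ acts through a nontrivial character. This establishes the first assertion.

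For the second assertion, suppose $G_{x_1}$ acts trivially on the fiber, so $l_2\mid k_2$ and $al_1\mid(d-ak_1)$. The latter forces $a\mid d$; set $p=d/a\ge 0$ and $q=0$. Then $al_1\mid a(p-k_1)$ gives $p\equiv k_1\pmod{l_1}$, while $q\equiv 0\equiv k_2\pmod{l_2}$, so $x^p$ occurs in the basis above. It takes the value $1$ at $x_1=(1,0)$, and since $p\ge 1$ unless the line bundle is trivial, it vanishes at $x_2=(0,1)$. The argument at $x_2$ is identical with $y^{d/b}$ in place of $x^{d/a}$. The step demanding genuine care is the fiber computation in the second paragraph: one must track the mixing of the $\mu_{l_1}$ and $\C^*$ factors appearing in the explicit generators of $G_{x_j}$ all the way through to the character by which it acts on the fiber, and verify that the divisibility conditions so obtained coincide exactly with the conditions under which a monomial supported on a single coordinate axis survives as a global section.
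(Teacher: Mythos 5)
Your proof is correct and follows essentially the same route as the paper's: you compute the character by which the isotropy group acts on the fiber using explicit generators, extract the divisibility conditions $a \mid d$, $l_1 \mid (d/a - k_1)$, $l_2 \mid k_2$, and exhibit the monomial $x^{d/a}$ (the paper's $x^{c}$) as the desired section, with your preliminary monomial basis of $H^0$ being a mild elaboration rather than a genuinely different method. The one caveat you flag --- that when $d=0$ and the isotropy acts trivially the bundle is trivial, so no section vanishes at exactly one point --- is a degenerate case the paper's own argument silently shares, so it is not a gap relative to the paper.
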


\begin{proof}
The first claim is immediate. For the second, assume the isotropy at $x_1$ acts trivially.

By \eqref{e: subgroup which fixes (1,0)}, the isotropy group at $x_1$ is
\begin{equation*} 
    G_{x_1} = \left\langle \left( e^{2\pi i \frac{1}{l_1}}, e^{2\pi i \frac{1}{l_2}}, e^{ -2\pi i \frac{1}{al_1} } \right) \right\rangle . 
\end{equation*}
The action of $\left( e^{2\pi i \frac{1}{l_1}}, e^{2\pi i \frac{1}{l_2}}, e^{ -2\pi i \frac{1}{al_1} } \right)$ on $\mathcal{O}^{k_1,k_2}_{\CP(a,b)/\mu_l}(d)|_{x_1}$ is given by
\begin{equation*}
    \left( e^{2\pi i \frac{1}{l_1}}, e^{2\pi i \frac{1}{l_2}}, e^{ -2\pi i \frac{1}{al_1} } \right) \cdot (1,0,z) = \left( 1, 0, z e^{ 2\pi i\frac{al_2k_1+al_1k_2-l_2d}{al_1l_2} }  \right) . 
\end{equation*}
This action is trivial, therefore $al_1l_2$ divides $al_2k_1+al_1k_2-l_2d$. Thus $a$ divides $l_2d$. Since $l_2$ and $a$ are coprime by the assumption preceding Lemma~\ref{Lemma: two-pointed smooth twisted curves are isomorphic to a GIT quotient}, this implies $a$ divides $d$. Rewrite $d$ as the product $ac$ for some integer $c$. Similar arguments show that $l_2$ divides $k_2$ and $l_1$ divides $(k_1-c)$. The latter ensures $k_1$ is congruent to $c$ modulo $l_1$.

Consider the map $f: \C^2 \to \C$ defined by $f(x,y)=x^c$. We claim $f$ descends to a section of $\mathcal{O}^{k_1,k_2}_{\CP(a,b)/\mu_l}(d)$. Note that
\begin{align*}
    f\left( \lambda^a x e^{2\pi i \frac{m_1}{l_1}}, \lambda^b y e^{2\pi i \frac{m_2}{l_2}} \right) &= \lambda^{ac} x^c e^{2\pi i \frac{cm_1}{l_1}} \\
    &= \lambda^{d} e^{2\pi i \frac{k_1 l_2 m_1 + k_2 l_1 m_2}{l_1l_2}}  f(x,y) .
\end{align*}
The second equality holds because $k_1$ is congruent to $c$ modulo $l_1$, $d$ equals $ac$, and $l_2$ divides $k_2$ so $\frac{k_2l_1m_2}{l_1l_2}$ is an integer for all integers $0\leq m_2 < l_2$. 

This verifies the map $(x,y) \mapsto (x,y,f(x,y))$ is $G$-equivariant, and therefore $f$ descends to a section $s$ of $\mathcal{O}^{k_1,k_2}_{\CP(a,b)/\mu_l}(d)$. One sees immediately that $s(x_1)\neq 0$ and $s(x_2)=0$.

An identical argument shows that if the action of the isotropy group at $x_2$ is trivial, then there exists a section $\tilde s$ of $\mathcal{O}^{k_1,k_2}_{\CP(a,b)/\mu_l}(d)$ such that $\tilde s(x_1) = 0$ and $\tilde s(x_2) \neq 0$.
\end{proof}

\begin{proposition}
\label{Proposition: Convexity for orbifold hypersurfaces}
Let $(C,x_1,x_2)$ be the source curve of a genus-zero $0+$-stable quasimap to a GIT stack quotient and $\mathcal{L}\to C$ a line bundle with non-negative degree on each irreducible component of $C$. Then $H^1(C,\mathcal{L}(-x_2))$ vanishes.
\end{proposition}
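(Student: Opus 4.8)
The plan is to induct on the number $n$ of irreducible components of $C$, peeling off in each step the terminal component containing $x_1$ and reducing to the complementary sub-chain; the two preceding lemmas supply all the cohomological input.

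For the base case $n=1$, Corollary~\ref{Lemma: Orbifold Source Curves} and Remark~\ref{Remark: Line bundles over the source curves} give $C \cong \CP(a,b)/\mu_l$ and $\mathcal{L} \cong \mathcal{O}^{k_1,k_2}_{\CP(a,b)/\mu_l}(d)$ with $d \geq 0$. I would twist the sequence $0 \to \mathcal{O}_C(-x_2) \to \mathcal{O}_C \to \mathcal{O}_{x_2} \to 0$ by $\mathcal{L}$ to obtain $0 \to \mathcal{L}(-x_2) \to \mathcal{L} \to \mathcal{L}|_{x_2} \to 0$. Since $x_2$ is zero-dimensional, $H^1(x_2,\mathcal{L}|_{x_2}) = 0$, so the long exact sequence reduces the claim to showing $H^1(C,\mathcal{L}) = 0$ and that the restriction $H^0(C,\mathcal{L}) \to H^0(x_2,\mathcal{L}|_{x_2})$ is surjective. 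The first vanishing is exactly Lemma~\ref{Lemma: Convexity for P(a,b)/mu_l}. For the second: if the isotropy at $x_2$ acts nontrivially on the fiber of $\mathcal{L}$, then $H^0(x_2,\mathcal{L}|_{x_2}) = 0$ by Lemma~\ref{Lemma: Global Sections of L and L|p} and the map is trivially surjective; otherwise $H^0(x_2,\mathcal{L}|_{x_2}) \cong \C$ and Lemma~\ref{Lemma: Global Sections of L and L|p} produces a global section of $\mathcal{L}$ nonzero at $x_2$, so the restriction is onto.

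For the inductive step with $n \geq 2$, Lemma~\ref{Lemma: Source curves are chains of smooth curves with exactly two special points} presents $C$ as a chain; let $C_1$ be the terminal component carrying $x_1$, let $p$ be the node joining $C_1$ to the union $C''$ of the remaining components, and write $\mathcal{L}_1 = \mathcal{L}|_{C_1}$, $\mathcal{L}'' = \mathcal{L}|_{C''}$. Restricting $[u]$ to $C''$ exhibits $(C'',p,x_2)$ as the source curve of a genus-zero $0+$-stable quasimap on which $\mathcal{L}''$ has non-negative degree on every component, so the inductive hypothesis yields $H^1(C'',\mathcal{L}''(-x_2)) = 0$. Because $x_2 \notin C_1$, tensoring the partial-normalization sequence $0 \to \mathcal{O}_C \to \mathcal{O}_{C_1} \oplus \mathcal{O}_{C''} \to \mathcal{O}_p \to 0$ by the locally free sheaf $\mathcal{L}(-x_2)$ gives $0 \to \mathcal{L}(-x_2) \to \mathcal{L}_1 \oplus \mathcal{L}''(-x_2) \to \mathcal{L}|_p \to 0$, and its long exact sequence reduces the claim to $H^1(C_1,\mathcal{L}_1) = 0$ (Lemma~\ref{Lemma: Convexity for P(a,b)/mu_l}, as $\mathcal{L}_1$ has non-negative degree) together with surjectivity of $H^0(C_1,\mathcal{L}_1) \oplus H^0(C'',\mathcal{L}''(-x_2)) \to H^0(p,\mathcal{L}|_p)$. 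The target is $0$ unless the isotropy at $p$ acts trivially on $\mathcal{L}$, in which case it is one-dimensional and Lemma~\ref{Lemma: Global Sections of L and L|p}, applied to the smooth curve $C_1$ with special points $x_1$ and $p$, provides a section of $\mathcal{L}_1$ nonzero at $p$, so that already the first summand surjects.

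The two lemmas carry essentially all the geometry, so the only point needing genuine care is the stacky bookkeeping: one must check that $x_2$ and the nodes are honest effective Cartier divisors (so the structure-sheaf and normalization sequences are valid), that $H^1$ of a line bundle restricted to a gerbe $B\mu_r$ vanishes in characteristic zero, and that $H^0$ over such a gerbe is at most one-dimensional (so the evaluation-surjectivity step closes). I do not anticipate any obstacle beyond this routine verification.
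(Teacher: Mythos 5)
Your argument is correct, and it relies on exactly the same two ingredients as the paper's proof (Lemma~\ref{Lemma: Convexity for P(a,b)/mu_l} for the vanishing of $H^1$ on a single component, and Lemma~\ref{Lemma: Global Sections of L and L|p} to produce sections nonzero at one special point and zero at the other), but the bookkeeping is organized differently. The paper argues on the whole chain at once: it first proves $H^1(C,\mathcal{L})=0$ via the full normalization sequence \eqref{Equation: Normalization exact sequence}, using Lemma~\ref{Lemma: Global Sections of L and L|p} to show the node-difference map $F$ is surjective, and then handles the twist by $-x_2$ through a separate divisor sequence at $x_2$. You instead induct on the number of components, peeling off the component containing $x_1$ via a partial normalization sequence and folding the $-x_2$ twist into the inductive statement; the base case coincides with the paper's final divisor-sequence step. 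Both routes are equally rigorous; yours avoids introducing the auxiliary map $F$ on the whole chain, while the paper's avoids an induction. The one point in your version that merits a sentence of justification is the inductive hypothesis itself: as stated, the proposition is about source curves of $0+$-stable quasimaps, so you should either check that $(C'',p,x_2,[u]|_{C''})$ is again such a quasimap (it is: $[u]$ has positive degree on every component by Lemma~\ref{Lemma: Source curves are chains of smooth curves with exactly two special points} and its base locus avoids $p$ and $x_2$, so the stability condition \eqref{Equation: Quasimap stability condition} holds), or, more economically, run the induction over chains of components $\CP(a,b)/\mu_l$ with two special points each, which is all that Lemma~\ref{Lemma: Source curves are chains of smooth curves with exactly two special points} and Corollary~\ref{Lemma: Orbifold Source Curves} actually use. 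Your closing "stacky bookkeeping" caveats (markings and nodes as divisors/gerbes, $H^0$ of a line bundle on $B\mu_r$ being $\C$ or $0$ according to the fiber action, $H^1$ on a gerbe vanishing) are indeed routine and are used implicitly in the paper as well.
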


\begin{proof}
By Lemma~\ref{Lemma: Source curves are chains of smooth curves with exactly two special points}, the underlying coarse curve $\underline{C}$ is a chain of rational components such that the marked points $\underline{x}_1$ and $\underline{x}_2$ lie on the end components. Suppose $C$ has $k$ irreducible components $\{C_j\}_{j=1}^k$, labeled so that $x_1$ lies on $C_1$, $x_2$ lies on $C_k$, and the curves $C_j$ and $C_{j+1}$ intersect at the node $n_j$. 

We first show $H^1(C,\mathcal{L})$ vanishes. Consider the normalization sequence:
\begin{equation}
\label{Equation: Normalization exact sequence}
\begin{tikzcd}
    0 \arrow[r]
    &\mathcal{O}_C \arrow[r]
    &\displaystyle\bigoplus_{j=1}^k \mathcal{O}_{C_j} \arrow[r]
    &\displaystyle\bigoplus_{j=1}^{k-1} \mathcal{O}_C|_{n_j} \arrow[r]
    &0.
\end{tikzcd}
\end{equation}
Tensoring by $\mathcal{L}$ and taking cohomology, we obtain
\begin{equation*}
\begin{tikzcd}
    0 \arrow[r]
    & H^0(C,\mathcal{L}) \arrow[r] 
    & \displaystyle \bigoplus_{j=1}^k H^0(C_j,\mathcal{L}|_{C_j}) \arrow[r,"F"]
    & \displaystyle \bigoplus_{j=1}^{k-1} H^0(n_j,\mathcal{L}|_{n_j}) \arrow[r]
    & ~ \\
    ~ \arrow[r]
    & H^1(C,\mathcal{L}) \arrow[r] 
    & 0.
\end{tikzcd}
\end{equation*}
Here $\bigoplus_{j=1}^{k} H^1(C_j,\mathcal{L}|_{C_j})$ is zero by Corollary~\ref{Lemma: Orbifold Source Curves} and Lemma~\ref{Lemma: Convexity for P(a,b)/mu_l}. 

For sections $s_j$ in $H^0(C_j,\mathcal{L}|_{C_j})$, the map $F$ is defined as
\begin{equation*} 
    F(s_1,\ldots,s_k) = \left( s_1(n_1)-s_2(n_1), s_2(n_2) - s_3(n_2), \ldots, s_{k-1}(n_{k-1}) - s_k(n_{k-1}) \right). 
\end{equation*}
If the isotropy group $G_{n_j}$ acts nontrivially on the fiber $\mathcal{L}|_{n_j}$, then $H^0(n_j,\mathcal{L}|_{n_j})$ is zero. Otherwise, Lemma~\ref{Lemma: Global Sections of L and L|p} implies there exists a section $\tilde{s}_j$ in $H^0(C_j,\mathcal{L}|_{C_j})$ such that $\tilde{s}_j(n_j)\neq 0$ and, if $j>1$, $\tilde{s}_j(n_{j-1})=0$. Thus
\begin{equation*}
    F\left( 0,\ldots, \frac{1}{\tilde{s}_j(n_j)} \tilde{s}_j,\ldots, 0 \right) = (0,\ldots,1,\ldots,0) ,
\end{equation*}
where the vectors above have a zero in every entry except the $i$th one. This shows $F$ is surjective. As a result, $H^1(C,\mathcal{L})$ vanishes.

Tensoring the divisor exact sequence of $C$ for the marked point $x_2$ by $\mathcal{L}$ and pushing forward gives the long exact sequence:
\begin{equation*}
\begin{tikzcd}
    0 \arrow[r]
    & H^0(C,\mathcal{L}(-x_2)) \arrow[r] 
    & H^0(C,\mathcal{L}) \arrow[r]
    & H^0(x_2,\mathcal{L}|_{x_2}) \arrow[r]
    & ~ \\
    ~ \arrow[r]
    & H^1(C,\mathcal{L}(-x_2)) \arrow[r] 
    & 0 .
\end{tikzcd}
\end{equation*}

If $G_{x_2}$ acts nontrivially on the fiber of $\mathcal{L}$ at $x_2$, then $H^0(x_2,\mathcal{L}|_{x_2})$ vanishes. So, the cohomology group $H^1(C,\mathcal{L}(-x_2))$ also vanishes. 

If $G_{x_2}$ acts trivially on the fiber of $\mathcal{L}$ at $x_2$, we need to show that the map $H^0(C,\mathcal{L}) \to H^0(x_2,\mathcal{L}|_{x_2})$ is surjective. By Lemma~\ref{Lemma: Global Sections of L and L|p}, there exists a section $s$ of $\mathcal{L}|_{C_k}\to C_k$ such that $s(n_{k-1})$ is zero and $s(x_2)$ is nonzero. The section $s$ can be extended by the zero section on the other components to get a section in $H^0(C,\mathcal{L})$ that does not vanish at $x_2$. Therefore the map $H^0(C,\mathcal{L}) \to H^0(x_2,\mathcal{L}|_{x_2})$ is surjective, so $H^1(C,\mathcal{L}(-x_2))$ is zero.
\end{proof}

\begin{theorem}
\label{Theorem: Weak convexity}
If a vector bundle $\mathcal{E}\to\mathfrak{X}$ is weakly semi-positive, then it is weakly convex.
\end{theorem}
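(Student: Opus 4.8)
The plan is to reduce the statement to the line-bundle case already established in Proposition~\ref{Proposition: Convexity for orbifold hypersurfaces}. Fix $\beta \in \Eff(W,G,\theta)$ and a point $(C,x_1,x_2,[u]) \in Q^{0+}_{0,2}(X,\beta)$. By Lemma~\ref{Lemma: Source curves are chains of smooth curves with exactly two special points} together with the generalization of Birkhoff--Grothendieck to chains of orbifold lines proved in \cite{martens2012variations}, the pullback $[u]^*\mathcal{E}$ splits on $C$ as a direct sum of line bundles $\bigoplus_{i=1}^r \mathcal{L}_i$; weak semi-positivity of $\mathcal{E}$ (Definition~\ref{Definition: Weakly semi-positive}) gives $\deg(\mathcal{L}_i) \geq 0$ for every $i$.

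The next step is to upgrade this to the statement that each $\mathcal{L}_i$ has non-negative degree on \emph{every} irreducible component $C_j$ of $C$. For this I would observe that the restriction $[u]|_{C_j}$, with the two special points of $C_j$ marked, is again a $0+$-stable quasimap: Lemma~\ref{Lemma: Source curves are chains of smooth curves with exactly two special points} guarantees $[u]$ has positive degree on $C_j$, so the $\Q$-line bundle $\omega_{\underline{C_j}}(\underline{x}_1 + \underline{x}_2) \otimes (\varphi_*([u]^*\mathcal{L}_\theta^{\otimes \mathbf{e}}))^{\varepsilon}$ is ample for all $\varepsilon > 0$. Applying weak semi-positivity to this restricted quasimap, and using that a vector bundle on $\CP(a,b)/\mu_l$ splits into line bundles uniquely up to reordering (Krull--Schmidt), the multiset $\{\deg(\mathcal{L}_i|_{C_j})\}_i$ coincides with the intrinsic splitting of $[u]|_{C_j}^*\mathcal{E}$ and hence consists of non-negative integers. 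By Remark~\ref{Remark: Line bundles over the source curves}, each $\mathcal{L}_i|_{C_j}$ is then isomorphic to some $\mathcal{O}^{k_1,k_2}_{\CP(a,b)/\mu_l}(d)$ with $d \geq 0$.

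With this in place the conclusion is immediate. Since cohomology commutes with finite direct sums,
\[
    H^1\bigl(C,[u]^*\mathcal{E}(-x_2)\bigr) = \bigoplus_{i=1}^r H^1\bigl(C,\mathcal{L}_i(-x_2)\bigr),
\]
and each summand vanishes by Proposition~\ref{Proposition: Convexity for orbifold hypersurfaces} applied to $\mathcal{L} = \mathcal{L}_i$, which has non-negative degree on each component of $C$ by the previous paragraph. Therefore $H^1(C,[u]^*\mathcal{E}(-x_2)) = 0$ for all $(C,x_1,x_2,[u]) \in Q^{0+}_{0,2}(X,\beta)$ and all $\beta \in \Eff(W,G,\theta)$, i.e.\ $\mathcal{E}$ is weakly convex.

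The only genuine obstacle is the middle step: weak semi-positivity as defined controls the total degree of each line-bundle summand of $[u]^*\mathcal{E}$ over all of $C$, but Proposition~\ref{Proposition: Convexity for orbifold hypersurfaces} requires non-negativity \emph{componentwise}. Bridging this gap requires checking that restriction to a component of the source curve again produces an object to which the hypothesis applies, and invoking uniqueness of the line-bundle decomposition on the orbifold $\CP^1$'s described in \S\ref{Section: Source curves}. Everything else is a formal consequence of results already proved.
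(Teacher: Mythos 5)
Your proposal is correct and follows essentially the same route as the paper: split $[u]^*\mathcal{E}$ into line bundles and reduce to Proposition~\ref{Proposition: Convexity for orbifold hypersurfaces}. The only difference is that you carefully justify the componentwise non-negativity of the $\mathcal{L}_i$ (by restricting the quasimap to each irreducible component, noting the restriction is again a two-pointed genus-zero $0+$-stable quasimap, and invoking uniqueness of the line-bundle splitting), a step the paper simply asserts in the discussion following Definition~\ref{Definition: weak convexity}; your argument for it is sound.
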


\begin{proof}
Consider a $0+$-stable quasimap $(C,x_1,x_2,[u])$ in $Q^{0+}_{0,2}(X,\beta)$. Since $[u]^*\mathcal{E}$ is a direct sum of line bundles, we may write
\begin{equation*}
    H^1(C,[u]^*\mathcal{E}(-x_2)) = \bigoplus_{i=1}^r H^1(C,\mathcal{L}_{i}(-x_2)). 
\end{equation*}
Each $\mathcal{L}_{i}$ satisfies the hypothesis of Proposition~\ref{Proposition: Convexity for orbifold hypersurfaces}. Hence, the cohomology group $H^1(C,[u]^*\mathcal{E}(-x_2))$ vanishes.
\end{proof}

%%%%%%%%%% The log canonical bundle %%%%%%%%%%

\subsection{Weak concavity}
\label{Section: The log canonical bundle}

In this section we define weak concavity. We then prove the relative log canonical bundle over the universal curve $\pi:\mathcal{C}\to \nolinebreak Q_{0,2}^{0+}(X,\beta)$ is trivial and use this result to prove a vector bundle $\mathcal{E}\to\mathfrak{X}$ is weakly convex if and only if its dual $\mathcal{E}^\vee\to\mathfrak{X}$ is weakly concave.

%In this section we prove the relative log canonical bundle over the universal curve $\pi:\mathcal{C}\to Q_{0,2}^{0+}(X,\beta)$ is trivial. We then use this result and Theorem~\ref{Theorem: Weak convexity} to prove that the dual of a semi-positive vector bundle $\mathcal{E}\to\mathfrak{X}$ is weakly concave.

\begin{definition}
A vector bundle $\mathcal{E}^\vee$ over $\mathfrak{X}$ is \emph{weakly concave} if $H^0(C,[u]^*\mathcal{E}^\vee(-x_1))$ vanishes for all $0+$-stable quasimaps $(C,x_1,x_2,[u])\in Q^{0+}_{0,2}(X,\beta)$ and $\beta\in \nolinebreak \Eff(W,G,\theta)$.
\end{definition}

\begin{lemma}
\label{Lemma: The log canonical bundle over a given source curve is trivial}
The log canonical bundle over a source curve $(C,x_1,x_2)$ of a $0+$-stable quasimap $(C,x_1,x_2,[u])$ in $Q^{0+}_{0,2}(X,\beta)$ is trivial.
\end{lemma}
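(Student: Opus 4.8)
The plan is to reduce to a purely combinatorial computation on a chain of orbifold $\mathbb{P}^1$'s using the explicit normal forms already established. By Lemma~\ref{Lemma: Source curves are chains of smooth curves with exactly two special points} and Corollary~\ref{Lemma: Orbifold Source Curves}, the source curve $(C,x_1,x_2)$ is a chain $C_1 \cup \cdots \cup C_k$ with each $C_j \cong \CP(a_j,b_j)/\mu_{l_j}$, with $x_1 \in C_1$, $x_2 \in C_k$, and $C_j$ meeting $C_{j+1}$ at a node $n_j$. The relative log canonical bundle is $\omega^{\log}_C = \omega_C(x_1 + x_2)$, and it is more natural to verify triviality by checking it on each component: on $C_j$ we must show that $\omega_{C_j}$ twisted by the appropriate special points (the node(s) on $C_j$ together with any marked point) is trivial, and that these local trivializations glue. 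So the first step is to establish triviality on a single $\CP(a,b)/\mu_l$.

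On a single component, I would argue as follows. For the weighted projective line $\CP(a,b)$ with rigidification $\varphi\colon \CP(a,b)\to\CP^1$, one has $\omega_{\CP(a,b)} = \mathcal{O}_{\CP(a,b)}(-a-b)$ (the canonical is $\mathcal{O}(-\sum \text{weights})$), while $\mathcal{O}_{\CP(a,b)}(a)$ and $\mathcal{O}_{\CP(a,b)}(b)$ are, up to the orbifold structure at $x_1$ and $x_2$, the line bundles $\mathcal{O}(x_2)$-type and $\mathcal{O}(x_1)$-type twists respectively; more precisely $\omega_{\CP(a,b)}^{\log}$, the log canonical with its two orbifold marked points, is trivial — this is the orbifold analogue of the fact that $\omega^{\log}_{\CP^1} = \mathcal{O}$ for a two-pointed $\mathbb{P}^1$, and is the content of the orbifold Euler sequence / orbifold Riemann--Roch. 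The $\mu_l$-quotient then poses the question of whether the quotient's log canonical is still trivial; since $\mu_l = \mu_{l_1}\times\mu_{l_2}$ acts on $\CP(a,b)$ fixing $x_1$ and $x_2$ and the line bundle $\omega^{\log}$ is equivariantly trivial (the trivializing section, a global nowhere-vanishing section, can be chosen $\mu_l$-invariant because the action on the base fixing the two special points forces the action on the fiber of an equivariantly-rigidified $\omega^{\log}$ to be trivial — one checks the weights at the two fixed points cancel against the twisting divisor), triviality descends. Concretely I would use the description $\mathcal{O}^{k_1,k_2}_{\CP(a,b)/\mu_l}(d)$ from Remark~\ref{Remark: Line bundles over the source curves} to write $\omega^{\log}$ as such a bundle with a specific $(d,k_1,k_2)$, and then exhibit an invariant section exactly as in the proof of Lemma~\ref{Lemma: Global Sections of L and L|p}.

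For the gluing step: the log canonical of a nodal curve restricts on each component to the canonical twisted by the preimages of the nodes (and the marked points landing on that component), so $\omega_C^{\log}|_{C_j} = \omega_{C_j}(n_{j-1} + n_j)$ for an interior component, $\omega_{C_1}(x_1 + n_1)$ for the first, $\omega_{C_k}(n_{k-1} + x_2)$ for the last — in each case a two-special-point log canonical, hence trivial by the first step. To conclude $\omega_C^{\log}$ itself is trivial, I would note that a line bundle on a chain of curves that is trivial on each component is trivial iff the gluing isomorphisms at the nodes can be chosen compatibly; since each $\omega_{C_j}^{\log}$ has a canonical nowhere-vanishing section (the logarithmic differential form) which at a node $n_j$ has a prescribed residue, and residues on the two branches at $n_j$ are opposite, these sections glue to a global nowhere-vanishing section of $\omega_C^{\log}$ (this is the standard fact that dualizing/log-canonical sheaves of nodal curves are built from component differentials with matching residues).

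The main obstacle I expect is the equivariance/descent bookkeeping on the $\mu_l$-quotient: one must track the characters by which $\mu_{l_1}$ and $\mu_{l_2}$ act on the fiber of $\omega^{\log}$ at each fixed point and verify they are trivial, so that the invariant trivializing section exists. This is exactly the type of coprimality-and-divisibility argument carried out in Lemmas~\ref{Lemma: two-pointed smooth twisted curves are isomorphic to a GIT quotient} and~\ref{Lemma: Global Sections of L and L|p}, so it is routine but needs to be done carefully; everything else is formal once the single-component case is in hand.
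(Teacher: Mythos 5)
Your proposal is correct and follows essentially the same route as the paper: triviality of $\omega_{C_j}(n_{j-1}+n_j)$ on each component $\CP(a,b)/\mu_l$ (the paper simply notes $\omega_{\CP(a,b)/\mu_l}\cong\mathcal{O}(-[0]-[\infty])$, where you spell out the equivariant descent), followed by gluing nowhere-vanishing sections with opposite residues at the nodes via the normalization sequence, exactly as in the paper's proof.
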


\begin{proof}
If $C$ is smooth, the canonical bundle of $\CP(a,b)/\mu_l$ is isomorphic to $\mathcal{O}(-[0]-[\infty])$. The result is then immediate. When the source curve is nodal, we again use Lemma~\ref{Lemma: Orbifold Source Curves}. In this case we will show that $\omega_{C}(x_1+x_2)$ has a nowhere vanishing section, which gives an isomorphism $\omega_{C}(x_1+ \nolinebreak x_2) \cong \nolinebreak \mathcal{O}_{C}$.

Let $C$ have $k$ rational components and $k-1$ nodes, labeled as $\{C_i\}_{i=1}^k$ and $\{n_i\}_{i=1}^{k-1}$ as in the proof of Proposition~\ref{Proposition: Convexity for orbifold hypersurfaces}. For convenience, relabel $x_1$ by $n_0$ and $x_2$ by $n_k$. Tensoring the normalization sequence \eqref{Equation: Normalization exact sequence} by the log canonical bundle gives rise to the following long exact sequence:
\begin{equation*}
0 \rightarrow H^0(C,\omega_C(x_1+x_2)) \xrightarrow{G} \bigoplus_{i=1}^k H^0(C_i,\omega_{C_i}(n_{i-1}+n_i))  \xrightarrow{F} \bigoplus_{i=1}^{k-1} H^0(n_i,\omega_{C}(x_1+x_2)|_{n_i}) \rightarrow \cdots .
\end{equation*}
    
Note that the map $F$ is a sum of residue maps. There exists nowhere vanishing sections $s_i$ of each $\omega_{C_i}(n_{i-1}+n_i)$ that have residue $1$ near $n_{i-1}$ and $-1$ near $n_i$. The tuple
\begin{equation*}
    \left( s_1 , \ldots , s_k \right) \in \bigoplus_{i=1}^k H^0(C_i,\omega_{C_i}(n_{i-1}+n_i))
\end{equation*}
lies in kernel of $F$, which is isomorphic to $H^0(C,\omega_C(x_1+x_2))$. Hence, the sections $s_1,\ldots,s_k$ glue together to give a nowhere vanishing global section of $\omega_C(x_1+x_2)$. Thus the log canonical bundle is trivial. 
\end{proof}

\begin{proposition}
\label{Proposition: The log canonical bundle over the universal curve is trivial}
The relative log canonical bundle over the universal curve $\pi : \mathcal{C}\to Q_{0,2}^{0+}(X,\beta)$ is trivial.
\end{proposition}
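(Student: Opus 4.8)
The plan is to upgrade the fibrewise statement of Lemma~\ref{Lemma: The log canonical bundle over a given source curve is trivial} to a statement over the whole family, using cohomology and base change. Write $B := Q^{0+}_{0,2}(X,\beta)$, let $\mathcal{D}_1,\mathcal{D}_2\subset\mathcal{C}$ be the divisors of the two markings, and set $\omega_{\pi,\log} := \omega_\pi(\mathcal{D}_1+\mathcal{D}_2)$. By Lemma~\ref{Lemma: The log canonical bundle over a given source curve is trivial}, the restriction of $\omega_{\pi,\log}$ to each geometric fibre $C$ of $\pi$ is isomorphic to $\mathcal{O}_C$; in particular $h^0\bigl(C,\omega_{\pi,\log}|_C\bigr)=1$ and $h^1\bigl(C,\omega_{\pi,\log}|_C\bigr)=h^1(C,\mathcal{O}_C)=0$, the latter since $C$ has arithmetic genus zero. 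As $\pi$ is proper and flat and $\omega_{\pi,\log}$ is flat over $B$, cohomology and base change gives $R^1\pi_*\omega_{\pi,\log}=0$, that $L := \pi_*\omega_{\pi,\log}$ is a line bundle on $B$, and that its formation commutes with base change, so $L\otimes\kappa(b)\xrightarrow{\ \sim\ }H^0\bigl(C_b,\omega_{\pi,\log}|_{C_b}\bigr)$ for every point $b$ of $B$.

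Next I would show the evaluation morphism $\pi^*L=\pi^*\pi_*\omega_{\pi,\log}\to\omega_{\pi,\log}$ is an isomorphism. Both source and target are line bundles on $\mathcal{C}$, so it suffices to check the map is an isomorphism over each fibre $C_b$, where by the base change statement it becomes the evaluation $H^0\bigl(C_b,\omega_{\pi,\log}|_{C_b}\bigr)\otimes_\C\mathcal{O}_{C_b}\to\omega_{\pi,\log}|_{C_b}$. Since this $H^0$ is one-dimensional and, by Lemma~\ref{Lemma: The log canonical bundle over a given source curve is trivial}, is spanned by a nowhere-vanishing section, the evaluation is an isomorphism. Hence $\omega_{\pi,\log}\cong\pi^*L$, and it remains to prove $L\cong\mathcal{O}_B$.

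For this I would use the Poincar\'e residue along $\mathcal{D}_1$. Since $\mathcal{D}_1$ is disjoint from $\mathcal{D}_2$ and $\mathcal{D}_1\to B$ is \'etale (an isomorphism, or a gerbe, over $B$), relative adjunction identifies $\omega_{\pi,\log}|_{\mathcal{D}_1}=\omega_\pi(\mathcal{D}_1)|_{\mathcal{D}_1}$ with $\omega_{\mathcal{D}_1/B}\cong\mathcal{O}_{\mathcal{D}_1}$; composing restriction to $\mathcal{D}_1$ with this identification and pushing forward by $\pi$ — using $(\pi|_{\mathcal{D}_1})_*\mathcal{O}_{\mathcal{D}_1}=\mathcal{O}_B$ — yields a morphism of line bundles $r\colon L\to\mathcal{O}_B$. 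Over any point $b$, $r$ sends the nowhere-vanishing generator of $H^0\bigl(C_b,\omega_{\pi,\log}|_{C_b}\bigr)$ — which restricted to the component of $C_b$ carrying $x_1$ is the residue-normalised section constructed in the proof of Lemma~\ref{Lemma: The log canonical bundle over a given source curve is trivial} — to its residue at $x_1$, which is nonzero. Thus $r$ is nonzero at every point of $B$, hence an isomorphism, and therefore $\omega_{\pi,\log}\cong\pi^*L\cong\pi^*\mathcal{O}_B=\mathcal{O}_{\mathcal{C}}$.

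The step I expect to require the most care is the orbifold bookkeeping near the marked gerbes and the nodes: checking that cohomology and base change applies over the Deligne--Mumford stack $B$, that the relative adjunction isomorphism $\omega_\pi(\mathcal{D}_1)|_{\mathcal{D}_1}\cong\mathcal{O}_{\mathcal{D}_1}$ and the identity $(\pi|_{\mathcal{D}_1})_*\mathcal{O}_{\mathcal{D}_1}=\mathcal{O}_B$ hold for the possibly gerby section $\mathcal{D}_1$, and that the Poincar\'e residue is the weight-zero generator in the local chart $[\mathbb{A}^1/\mu_r]$ near a marking. An alternative, more hands-on route avoiding the pushforward is to glue the residue-$1$ sections of the previous lemma directly over $\mathcal{C}$: near a node the universal curve has \'etale-local model $xy=t$, on which $\omega_{\pi,\log}$ is freely generated by $dx/x=-dy/y$, and the canonical choices of the lemma patch to a global nowhere-vanishing section; this makes the trivialisation explicit at the cost of spelling out the local charts of $\mathcal{C}$.
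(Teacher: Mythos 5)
Your proposal is correct and is essentially the paper's argument: both rest on the fibrewise triviality of $\omega_C(x_1+x_2)$ (Lemma~\ref{Lemma: The log canonical bundle over a given source curve is trivial}) and trivialize the pushforward $\pi_*\omega_\pi(x_1+x_2)$ via the residue at the first marking, producing a section that is nonzero, hence nowhere vanishing, on every fibre. The only difference is packaging: where you invoke cohomology-and-base-change plus the evaluation map $\pi^*L\to\omega_\pi(x_1+x_2)$ and a residue morphism $r\colon L\to\mathcal{O}_B$, the paper pushes forward the divisor sequence for $x_1$ twisted by $\omega_\pi(x_2)$ and uses fibrewise Serre duality to kill $\mathbbm{R}^0\pi_*\omega_\pi(x_2)$ and $\mathbbm{R}^1\pi_*\omega_\pi(x_2)$, so that restriction-plus-residue at $x_1$ directly identifies $\mathbbm{R}^0\pi_*\omega_\pi(x_1+x_2)$ with the trivial bundle.
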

\begin{proof}
Tensoring the divisor exact sequence of $\mathcal{C}$ for the divisor corresponding to the first marked point by $\omega_\pi(x_2)$ and pushing forward, we obtain the long exact sequence
\begin{equation*}
\begin{tikzcd}
    0 \arrow[r]
    & \mathbbm{R}^0\pi_*\omega_\pi(x_2) \arrow[r] 
    & \mathbbm{R}^0\pi_*\omega_{\pi}(x_1+x_2) \arrow[r]
    & \mathbbm{R}^0\pi_*\left(\omega_\pi(x_1+x_2)|_{x_1}\right) \arrow[r]
    & ~ \\
    ~\arrow[r]
    & \mathbbm{R}^1\pi_*\omega_\pi(x_2) \arrow[r] 
    & \mathbbm{R}^1\pi_*\omega_{\pi}(x_1+x_2) \arrow[r]
    & 0 .
\end{tikzcd}
\end{equation*}

Lemma~\ref{Lemma: The log canonical bundle over a given source curve is trivial} and Serre duality imply that for every $0+$-stable quasimap $(C,x_1,x_2,[u])$ in $Q_{0,2}^{0+}(X,\beta)$, the cohomology groups $H^0(C,\omega_C(x_2))$ and $H^1(C,\omega_C(x_2))$ vanish. Therefore $\mathbb{R}^0\pi_*\omega_\pi(x_2)$ and $\mathbb{R}^1\pi_*\omega_\pi(x_2)$ are both zero. Hence, the map
\begin{equation*}
    \mathbbm{R}^0\pi_*\omega_{\pi}(x_1+x_2) \longrightarrow \mathbbm{R}^0\pi_*\left(\omega_\pi(x_1+x_2)|_{x_1}\right) 
\end{equation*}
is an isomorphism. 

The sheaf $\mathbbm{R}^0\pi_*(\omega_\pi(x_1+x_2)|_{x_1})$ is canonically trivialized by the residue map. Thus $\mathbbm{R}^0\pi_*\omega_\pi(x_1+x_2)$ is a trivial line bundle.

The constant function $1\in \Gamma( Q_{0,2}^{0+}(X,\beta), \mathbbm{R}^0\pi_*(\omega_\pi(x_1+x_2)|_{x_1}) )$ is the image of a section $s\in \Gamma( Q_{0,2}^{0+}(X,\beta), \mathbbm{R}^0\pi_*\omega_{\pi}(x_1+x_2) )$. The section $s$ corresponds to a nonzero section $\tilde{s}$ of $\omega_\pi(x_1+x_2)$. The restriction of $\omega_\pi(x_1+\nolinebreak x_2)$ to a fiber $(C,x_1,x_2)$ of the universal curve $\mathcal{C}$ is canonically trivial by Lemma~\ref{Lemma: The log canonical bundle over a given source curve is trivial}. The section $\tilde{s}|_C$ is nonzero and hence nowhere vanishing on each fiber $(C,x_1,x_2)$ in $\mathcal{C}$. Therefore $\tilde{s}$ is  nowhere vanishing.

Hence, $\omega_{\pi}(x_1+x_2)$ has a nowhere vanishing section and so it is trivial. 
\end{proof}

\begin{theorem}
\label{Theorem: Weak concavity}
The vector bundle $\mathcal{E}$ is weakly convex if and only if $\mathcal{E}^\vee$ is weakly concave.
\end{theorem}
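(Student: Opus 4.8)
The plan is to derive the equivalence from Serre duality on the source curve, converting the $H^1$-vanishing condition that defines weak convexity into the $H^0$-vanishing condition that defines weak concavity.

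First I would fix $\beta \in \Eff(W,G,\theta)$ and a $0+$-stable quasimap $(C,x_1,x_2,[u]) \in Q^{0+}_{0,2}(X,\beta)$, and record that $[u]^*\mathcal{E}$ is a vector bundle on the twisted curve $C$ with $([u]^*\mathcal{E})^\vee = [u]^*(\mathcal{E}^\vee)$. Since $C$ is a proper one-dimensional Deligne--Mumford stack with projective coarse space, and is Gorenstein (being nodal), Serre duality applies: for any vector bundle $\mathcal{F}$ on $C$ there is a natural isomorphism
\begin{equation*}
    H^1(C,\mathcal{F})^\vee \cong H^0(C,\mathcal{F}^\vee \otimes \omega_C),
\end{equation*}
where $\omega_C$ is the dualizing line bundle. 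I would then apply this with $\mathcal{F} = [u]^*\mathcal{E}(-x_2)$.

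Next I would simplify the right-hand side. We have $\mathcal{F}^\vee \otimes \omega_C = [u]^*(\mathcal{E}^\vee) \otimes \mathcal{O}_C(x_2) \otimes \omega_C$, and by Lemma~\ref{Lemma: The log canonical bundle over a given source curve is trivial} the log canonical bundle $\omega_C(x_1+x_2)$ is trivial, so $\omega_C \cong \mathcal{O}_C(-x_1-x_2)$. Substituting gives $\mathcal{F}^\vee \otimes \omega_C \cong [u]^*(\mathcal{E}^\vee)(-x_1)$, hence
\begin{equation*}
    H^1\bigl(C,[u]^*\mathcal{E}(-x_2)\bigr)^\vee \cong H^0\bigl(C,[u]^*(\mathcal{E}^\vee)(-x_1)\bigr).
\end{equation*}
Thus, for each individual quasimap, the first group vanishes precisely when the second does; quantifying over all $\beta \in \Eff(W,G,\theta)$ and all $(C,x_1,x_2,[u]) \in Q^{0+}_{0,2}(X,\beta)$ then yields the equivalence asserted in the theorem.

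I expect the only delicate point to be the invocation of Serre duality in the orbifold setting together with the identification of the dualizing sheaf. This should cause no real trouble: twisted curves are Gorenstein proper DM stacks of dimension one, so Serre duality holds in the usual form, and the identification $\omega_C \cong \mathcal{O}_C(-x_1-x_2)$ is exactly the content of Lemma~\ref{Lemma: The log canonical bundle over a given source curve is trivial}. The remaining manipulations are formal line-bundle bookkeeping.
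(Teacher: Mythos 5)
Your proof is correct and follows essentially the same route as the paper: Serre duality on the twisted curve combined with the triviality of the log canonical bundle (Lemma~\ref{Lemma: The log canonical bundle over a given source curve is trivial}) to identify $H^1(C,[u]^*\mathcal{E}(-x_2))^\vee$ with $H^0(C,[u]^*\mathcal{E}^\vee(-x_1))$, then quantifying over all two-pointed quasimaps. The paper performs the identical computation, merely written starting from the $H^0$ side.
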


\begin{proof}
For a point $(C,x_1,x_2,[u])$ in the moduli space $Q_{0,2}^{0+}(X,\beta)$, the log canonical bundle $\omega_C(x_1+x_2)$ is canonically trivial by Lemma~\ref{Lemma: The log canonical bundle over a given source curve is trivial}. Hence, the canonical bundle $\omega_C$ is isomorphic to $\mathcal{O}_C(-x_1-x_2)$. We now have the following:
\begin{align*}
    H^0(C, [u]^*\mathcal{E}^\vee(-x_1)) &= H^1(C, [u]^*\mathcal{E}(x_1) \otimes \omega_C)^\vee \\
    &= H^1(C, [u]^*\mathcal{E}(-x_2))^\vee .
\end{align*}
The first equality is Serre duality and the second is because of the isomorphism $\omega_C\cong \mathcal{O}_C(-x_1-x_2)$. If the left hand side vanishes for all $0+$-stable two-pointed quasimaps then so does the right, and vice versa.
This completes the proof.
\end{proof}

% --------------------------------------------------------------
%                   Quantum Lefschetz
% --------------------------------------------------------------

\section{Quantum Lefschetz}
\label{Section: Quantum Lefschetz}

The quantum Lefschetz hyperplane theorem compares the genus-zero Gromov-Witten theory of a space $X$ with that of a complete intersection $Z\subset X$ defined by a section of a vector bundle $E \to X$ \cite{Coates_Givental_Quantum_Riemann-Roch_Lefschetz_and_Serre_2007,batyrev2000mirror,kim1999quantum,bertram2000another,lee2001quantum,gathmann2003relative}. The same proof applies to quasimaps under certain conditions on $E$ \cite{Ciocan_Fontanine_Kim_Maulik_Stable_quasimaps_to_GIT_quotients_2014}.

In this section we use a modification of the quantum Lefschetz theorem for two-pointed genus-zero quasimaps to relate the generating function $L^{Z,0+}(z)$ to a twisted version of $L^{X,0+}(z)$.

% Convexity
%\subsection{Convexity}\label{s:conv}

\begin{remark}[quantum Lefschetz for orbifolds]\label{r:conv}
Let $Z$ be a closed subset of $X$ cut out by a section of a vector bundle $E\to X$. In the case when $X$ is a smooth variety, the quantum Lefschetz theorem holds as long as $E$ is the direct sum of semi-positive line bundles. However this can fail when $X$ is an orbifold. For example, the line bundle  $\mathcal{O}_{\CP(1,1,2,2)}(1) \to \CP(1,1,2,2)$ is positive, but quantum Lefschetz fails for the hypersurface $\CP(1,2,2)$ defined by a section of $\mathcal{O}_{\CP(1,1,2,2)}(1)$  \cite{coates2012quantum}. 

As observed in \cite{coates2012quantum}, the general setting in which one should expect a quantum Lefschetz statement is not when $E$ is semi-positive, but rather when $E$ is \emph{convex}, that is, when for every stable map $f: C \to X$ from a genus-zero curve $C$, the cohomology group $H^1(C, f^*(E))$ is zero. If $X$ is a variety, then if $E \to X$ is semi-positive it is also convex. This no longer holds when $X$ is an orbifold.  

For two-pointed genus-zero quasimaps, we will see that in fact weak convexity is sufficient for a quantum Lefschetz statement.
\end{remark}

\subsection{Weak convexity and twisted invariants}

Let $X$, $\mathfrak{X}$, and $\mathcal{E}$ be as in \S\ref{Section: A local complete intersections in X}. For the remainder of the paper we assume that $X$ is proper and that $\mathcal{E}$ is weakly convex.

\begin{lemma}
\label{Lemma: H^1(E) vanishes}
The cohomology group $H^1(C,[u]^*\mathcal{E})$ vanishes for all points $(C,x_1,x_2,[u])$ in $Q^{0+}_{0,2}(X,\beta)$ and classes $\beta\in \Eff(W,G,\theta)$.
\end{lemma}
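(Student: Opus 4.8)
The plan is to deduce this from Definition~\ref{Definition: weak convexity} together with the observation that over a two-pointed genus-zero $0+$-stable source curve, adding the reduced marked point $x_2$ back in cannot create new first cohomology. First I would recall that by Lemma~\ref{Lemma: Source curves are chains of smooth curves with exactly two special points} and \cite{martens2012variations}, the pullback $[u]^*\mathcal{E}$ splits as a direct sum of line bundles $\bigoplus_{i=1}^r\mathcal{L}_i$, so it suffices to show $H^1(C,\mathcal{L}_i)=0$ for each $i$. The hypothesis of weak convexity gives $H^1(C,[u]^*\mathcal{E}(-x_2))=\bigoplus_i H^1(C,\mathcal{L}_i(-x_2))=0$, hence $H^1(C,\mathcal{L}_i(-x_2))=0$ for every $i$.

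Next I would run the divisor exact sequence for the marked point $x_2$ on each line bundle $\mathcal{L}_i$:
\begin{equation*}
    0 \to \mathcal{L}_i(-x_2) \to \mathcal{L}_i \to \mathcal{L}_i|_{x_2} \to 0 .
\end{equation*}
Pushing forward to the coarse curve (or just taking cohomology on $C$) yields the long exact sequence
\begin{equation*}
    \cdots \to H^1(C,\mathcal{L}_i(-x_2)) \to H^1(C,\mathcal{L}_i) \to H^1(x_2,\mathcal{L}_i|_{x_2}) \to 0 ,
\end{equation*}
and since $x_2$ is zero-dimensional, $H^1(x_2,\mathcal{L}_i|_{x_2})=0$. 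Combined with the vanishing of the first term from weak convexity, we conclude $H^1(C,\mathcal{L}_i)=0$, and summing over $i$ gives $H^1(C,[u]^*\mathcal{E})=0$.

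There is essentially no hard part here — this is a soft consequence of the definitions. The only point requiring a sentence of care is the splitting of $[u]^*\mathcal{E}$ into line bundles, which is exactly what was established in \S\ref{Section: Weak convexity} (via \cite{martens2012variations}) and is what makes the component-by-component argument legitimate even when $\mathcal{E}$ itself does not split; I would cite that discussion rather than reprove it. One could also phrase the whole thing in one line without the splitting, directly from the sequence $0\to[u]^*\mathcal{E}(-x_2)\to[u]^*\mathcal{E}\to[u]^*\mathcal{E}|_{x_2}\to 0$, since $H^1$ of a sheaf supported on the zero-dimensional locus $x_2$ vanishes; the split form is only useful if one wants to match the bookkeeping used later in the quantum Lefschetz argument.
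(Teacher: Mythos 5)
Your proposal is correct and matches the paper's argument: the paper applies the divisor exact sequence for $x_2$ tensored with $[u]^*\mathcal{E}$ directly (no line-bundle splitting) and uses weak convexity to kill $H^1(C,[u]^*\mathcal{E}(-x_2))$, which is precisely the one-line unsplit version you mention at the end. The detour through the splitting $\bigoplus_i \mathcal{L}_i$ is harmless but unnecessary here.
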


\begin{proof}
Fix a point $(C,x_1,x_2,[u])$ in $Q^{0+}_{0,2}(X,\beta)$. Consider the divisor exact sequence
\begin{equation*}
\begin{tikzcd}
    0 \arrow[r]
    & \mathcal{O}_C(-x_2) \arrow[r]
    & \mathcal{O}_C \arrow[r]
    & \mathcal{O}_C|_{x_2} \arrow[r]
    & 0 .
\end{tikzcd}
\end{equation*}
Tensoring this by $[u]^*\mathcal{E}$ and pushing forward gives rise to a long exact sequence in cohomology
\begin{equation*}
\begin{tikzcd}
    0 \arrow[r]
    & H^0(C,[u]^*\mathcal{E}(-x_2)) \arrow[r] 
    & H^0(C,[u]^*\mathcal{E}) \arrow[r]
    & H^0(x_2,[u]^*\mathcal{E}|_{x_2}) \arrow[r]
    & ~ \\
    ~ \arrow[r]
    & H^1(C,[u]^*\mathcal{E}(-x_2)) \arrow[r] 
    & H^1(C,[u]^*\mathcal{E}) \arrow[r]
    & 0 . ~~~~~~~~~~
\end{tikzcd}
\end{equation*}
By Theorem~\ref{Theorem: Weak convexity}, $H^1(C,[u]^*\mathcal{E}(-x_2))$ vanishes. Therefore $H^1(C,[u]^*\mathcal{E})$ also vanishes.
\end{proof}

Denote by $\pi$ the projection from the universal curve $\mathcal{C}$ to $Q^{0+}_{0,2}(X,\beta)$. Let $[u]$ be the universal map from $\mathcal{C}$ to $\mathfrak{X}$. By Lemma~\ref{Lemma: H^1(E) vanishes}, $\mathbbm{R}^0\pi_*[u]^*\mathcal{E}$ is a vector bundle on $Q_{0,2}^{0+}(X,\beta)$. Let $0_{X}$ denote the zero section of $\mathbbm{R}^0\pi_*[u]^*\mathcal{E}$ and $\tilde{s}\in \Gamma(Q_{0,2}^{0+}(X,\beta), \mathbbm{R}^0\pi_*[u]^*\mathcal{E})$ denote the section induced by $s$.

\begin{theorem}
\label{Theorem: Functoriality of the virtual class}
There is a fiber square
\begin{center}
\begin{tikzcd}
    Q^{0+}_{0,2}(Z,\beta) \arrow[r,hook,"\tilde{j}"] \arrow[d,"\tilde{j}"] 
    & Q^{0+}_{0,2}(X,\beta) \arrow[d,"\tilde{s}"] \\
    Q^{0+}_{0,2}(X,\beta) \arrow[r,hook,"~~~0_{X}" below]
    & \mathbbm{R}^0\pi_*[u]^*\mathcal{E} .
\end{tikzcd}
\end{center}
The virtual classes are related by
\begin{equation*}
    0^!_{X}\left[Q^{0+}_{0,2}(X,\beta)\right]^{\vir} = \left[Q^{0+}_{0,2}(Z,\beta)\right]^{\vir}.
\end{equation*}
\end{theorem}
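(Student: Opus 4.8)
The plan is to establish the Cartesian square first, and then to deduce the comparison of virtual classes from the general functoriality of virtual classes under refined Gysin pullback. The essential new input is Lemma~\ref{Lemma: H^1(E) vanishes}: it is precisely weak convexity that makes $\mathbbm{R}^0\pi_*[u]^*\mathcal{E}$ a vector bundle $V$ on $Q^{0+}_{0,2}(X,\beta)$ (rather than merely a two-term complex), so that $0_X$ and $\tilde s$ are honest sections of a bundle and one can carry out the comparison over $V$.

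\emph{The Cartesian square.} I would check that for a family of $0+$-stable quasimaps $(\mathcal{C}_T\to T, x_1, x_2, [u]_T)$ to $X$ over a base $T$, the section $\tilde s$ restricts to the zero section of $V|_T$ if and only if $[u]_T\colon\mathcal{C}_T\to\mathfrak{X}$ factors through the closed substack $\mathfrak{Z}=[Z(s)/G]$. By Lemma~\ref{Lemma: H^1(E) vanishes} and cohomology-and-base-change, $\mathbbm{R}^0\pi_*[u]^*\mathcal{E}$ commutes with arbitrary base change, so by adjunction a global section of $[u]_T^*\mathcal{E}$ on $\mathcal{C}_T$ is the same datum as a section of $V|_T$ over $T$; under this identification the tautological section $[u]_T^*s$ corresponds to $\tilde s|_T$. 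Hence $\tilde s|_T=0$ iff $[u]_T^*s=0$ iff the image of $[u]_T$ is contained in the zero scheme $Z(s)$. Since the linearization restricted to $Z(s)$ has semistable locus $Z(s)\cap W^{ss}(\theta)$ (recorded in \S\ref{Section: A local complete intersections in X}) and the preimage of the unstable locus of $\mathfrak{Z}$ under $\mathfrak{Z}\hookrightarrow\mathfrak{X}$ is the unstable locus of $\mathfrak{Z}$, all the remaining data --- base locus, markings, nodes, curve class, and the stability requirement \eqref{Equation: Quasimap stability condition}, which depends only on $(\underline{C},\underline x_1,\underline x_2)$ and on $[u]^*\mathcal{L}_\theta$ --- are inherited compatibly; thus such a family is exactly a family of $0+$-stable quasimaps to $Z$. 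This identifies $Q^{0+}_{0,2}(Z,\beta)$ with the zero locus $\tilde s^{-1}(0_X)$, and since $0_X$ and $\tilde s$ are sections of the same bundle $V$ the square is automatically Cartesian.

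\emph{The virtual classes.} Abbreviate $Q_Z=Q^{0+}_{0,2}(Z,\beta)$, $Q_X=Q^{0+}_{0,2}(X,\beta)$, and write $\tilde j\colon Q_Z\hookrightarrow Q_X$ for the resulting embedding. Let $\mathfrak{M}$ be the (smooth) Artin stack of prestable twisted curves together with the $G$-bundle data relative to which both quasimap obstruction theories $E^\bullet_X$ and $E^\bullet_Z$ are defined. On the universal curve over $Q_Z$, the normal bundle sequence $0\to[u]^*T_{\mathfrak{Z}}\to[u]^*T_{\mathfrak{X}}\to[u]^*\mathcal{E}\to 0$ of $\mathfrak{Z}\subset\mathfrak{X}$, pushed forward by $\mathbbm{R}\pi_*$ and dualized, produces a distinguished triangle $\tilde j^*E^\bullet_X\to E^\bullet_Z\to V^\vee|_{Q_Z}\xrightarrow{+1}$ in which $V^\vee|_{Q_Z}\to\mathbb{L}_{Q_Z/Q_X}$ is the Koszul (section) obstruction theory of $Q_Z$ viewed as the zero locus of $\tilde s$; one then checks this triangle is compatible with the triangle of cotangent complexes of the tower $Q_Z\to Q_X\to\mathfrak{M}$. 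Feeding this compatible datum into the functoriality of virtual classes (Kim--Kresch--Pantev; Manolache's virtual pullbacks) gives $[Q_Z]^{\vir}=\tilde j^![Q_X]^{\vir}$, where $\tilde j^!$ is the virtual pullback attached to $V^\vee\to\mathbb{L}_{Q_Z/Q_X}$. Because that obstruction theory is exactly the one arising from the Cartesian square, $\tilde j^!$ coincides with the refined Gysin homomorphism $0^!_X$ of the regular embedding $0_X\colon Q_X\hookrightarrow V$, which is the claimed identity $0^!_X[Q^{0+}_{0,2}(X,\beta)]^{\vir} = [Q^{0+}_{0,2}(Z,\beta)]^{\vir}$.

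\emph{Main obstacle.} The delicate point is the compatibility in the second step: producing a genuine morphism of distinguished triangles linking $\tilde j^*E^\bullet_X$, $E^\bullet_Z$ and $V^\vee$ with the triangle of cotangent complexes, i.e. showing the quasimap obstruction theory of $Z$ is compatible with that of $X$ and the section $\tilde s$ in the precise sense required for functoriality. This is the standard but technical ingredient in every quantum Lefschetz argument, and here it runs exactly as in \cite{Ciocan_Fontanine_Kim_Maulik_Stable_quasimaps_to_GIT_quotients_2014}; the only change is that we need $H^1(C,[u]^*\mathcal{E})$ to vanish only for two-pointed genus-zero domains, which is supplied by weak convexity in place of the usual convexity hypothesis.
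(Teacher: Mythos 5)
Your proposal is correct and takes essentially the same route as the paper: the paper's proof is the single line that the argument of Kim--Kresch--Pantev \cite{kim2003functoriality} extends to quasimaps, and your write-up (the Cartesian square via weak convexity plus cohomology-and-base-change, then KKP/Manolache functoriality applied to the compatible triangle of obstruction theories over the stack of curves with $G$-bundles) is exactly that extension made explicit. The only cosmetic slip is a shift: the Koszul obstruction theory of the zero locus is $V^\vee|_{Q^{0+}_{0,2}(Z,\beta)}[1]\to\mathbb{L}_{Q^{0+}_{0,2}(Z,\beta)/Q^{0+}_{0,2}(X,\beta)}$, so the third term of your distinguished triangle should carry the shift $[1]$.
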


\begin{proof}
The proof of the theorem in \cite{kim2003functoriality} for Gromov-Witten theory extends to quasimaps.
\end{proof}

Theorem~\ref{Theorem: Functoriality of the virtual class} implies the more familiar quantum Lefschetz statement from \cite{Ciocan_Fontanine_Kim_Maulik_Stable_quasimaps_to_GIT_quotients_2014}.

\begin{proposition}
\emph{\cite{Ciocan_Fontanine_Kim_Maulik_Stable_quasimaps_to_GIT_quotients_2014}}
\label{Proposition: Quantum Lefschetz}
The following classes are equal in the Chow group of $Q_{0,2}^{0+}(X,\beta)$:
\begin{equation*}
    \tilde{j}_*\left[Q^{0+}_{0,2}(Z,\beta)\right]^{\vir} = e(\mathbbm{R}^{0}\pi_*[u]^*\mathcal{E}) \cap \left[Q^{0+}_{0,2}(X,\beta)\right]^{\vir} .
\end{equation*}
\end{proposition}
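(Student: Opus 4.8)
The plan is to deduce Proposition~\ref{Proposition: Quantum Lefschetz} from the refined cycle-level statement in Theorem~\ref{Theorem: Functoriality of the virtual class} by a standard excess-intersection (or ``Gysin pullback vs.\ Euler class'') argument. The essential observation is that $\mathbbm{R}^0\pi_*[u]^*\mathcal{E}$ is, by Lemma~\ref{Lemma: H^1(E) vanishes}, an honest vector bundle $V \to Q^{0+}_{0,2}(X,\beta)$ (the higher pushforward vanishes, so the complex $\mathbbm{R}\pi_*[u]^*\mathcal{E}$ is represented by a vector bundle in degree zero), and that $Q^{0+}_{0,2}(Z,\beta)$ sits inside $Q^{0+}_{0,2}(X,\beta)$ as the zero locus of the section $\tilde{s}$ of $V$. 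In this situation the composition of the Gysin map $0^!_X$ with the inclusion $\tilde\jmath_*$ is cupping with the Euler class.

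First I would set notation: write $V := \mathbbm{R}^0\pi_*[u]^*\mathcal{E}$, $Q := Q^{0+}_{0,2}(X,\beta)$, and $Q_Z := Q^{0+}_{0,2}(Z,\beta)$, so that we have the fiber square of Theorem~\ref{Theorem: Functoriality of the virtual class} with $\tilde{s}\colon Q \to V$ the section induced by $s$ and $0_X\colon Q \to V$ the zero section, and with $Q_Z = \tilde{s}^{-1}(0_X(Q))$. Second, I would recall the general fact (see Fulton, \emph{Intersection Theory}, Ch.~6, or Vistoli's refined Gysin formalism for Deligne--Mumford stacks) that for a vector bundle $V$ on a stack $Q$ with zero section $0_X$ and any section $\sigma$ with zero scheme $i\colon Q_\sigma \hookrightarrow Q$, and for any class $\alpha \in A_*(Q)$, one has
\begin{equation*}
    i_*\bigl(0^!_X \alpha\bigr) = e(V)\cap \alpha
\end{equation*}
in $A_*(Q)$, where $0^!_X\alpha$ is computed via the fiber square obtained by pulling back $0_X$ along $\sigma$. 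This is simply the statement that the refined intersection with the zero section, followed by pushing forward to $Q$, equals the top Chern class of $V$; the two sections $0_X$ and $\sigma$ are homotopic through sections, so they induce the same Gysin map $A_*(V)\to A_*(Q)$ after composing with $0_{X*}$, and $0^!_X \circ 0_{X*} = e(V)\cap(-)$.

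Third, I would apply this with $\alpha = [Q]^{\vir} = [Q^{0+}_{0,2}(X,\beta)]^{\vir}$, $\sigma = \tilde{s}$, $i = \tilde\jmath$, and $Q_\sigma = Q_Z$. Theorem~\ref{Theorem: Functoriality of the virtual class} identifies $0^!_X [Q]^{\vir} = [Q_Z]^{\vir}$, so pushing forward along $\tilde\jmath$ and invoking the displayed identity gives
\begin{equation*}
    \tilde{j}_*\left[Q^{0+}_{0,2}(Z,\beta)\right]^{\vir} = \tilde{j}_*\bigl(0^!_X[Q]^{\vir}\bigr) = e(V)\cap [Q]^{\vir} = e(\mathbbm{R}^{0}\pi_*[u]^*\mathcal{E}) \cap \left[Q^{0+}_{0,2}(X,\beta)\right]^{\vir},
\end{equation*}
which is exactly the claimed equality in the Chow group of $Q^{0+}_{0,2}(X,\beta)$. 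I expect the only real subtlety is bookkeeping: ensuring that the Gysin pullback $0^!_X$ appearing in Theorem~\ref{Theorem: Functoriality of the virtual class} is literally the one used in the excess-intersection formula (it is, since both are defined via the same fiber square), and that all intersection-theoretic operations are legitimate on these Deligne--Mumford stacks (which they are, working with $\Q$-coefficients and the operational/Chow formalism for DM stacks). No convexity or orbifold-specific input is needed here beyond what is already packaged into the hypothesis that $V$ is a vector bundle, i.e.\ Lemma~\ref{Lemma: H^1(E) vanishes}.
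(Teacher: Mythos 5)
Your proposal is correct and matches the paper's route: the paper deduces this proposition directly from Theorem~\ref{Theorem: Functoriality of the virtual class}, exactly as you do, with the standard identity $\tilde{j}_*\circ 0^!_X = e(\mathbbm{R}^0\pi_*[u]^*\mathcal{E})\cap(-)$ (Fulton, Ch.~6, together with Lemma~\ref{Lemma: H^1(E) vanishes} guaranteeing $\mathbbm{R}^0\pi_*[u]^*\mathcal{E}$ is a vector bundle) supplying the only additional input. Your filled-in excess-intersection details are the implicit content of the paper's one-line deduction, so there is nothing substantively different to flag.
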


%%%%%%%%%% The $\bullet^{X/Z}_{0+}$-product %%%%%%%%%%
%\subsection{The $\bullet^{X/Z}_{0+}$-product} {\color{blue}I think we should remove this section heading because its not what the section is really about.  I added a new subsection a little ways up.  At this point I'm not actually sure there's much point in talking about the $\bullet^{X/Z}_{0+}$-product, maybe we remove these paragraphs and just jump straight to the stuff about $L$, but mention that the arguments are similar to those appearing in  \parencite[\S 2.1]{Pandharipande_after_Givental_1998} regarding a twisted quantum product } {\color{red} Sounds good, I will purge away and change \ref{Definition: Z-twisted virtual class} to be only for 2 marked points.}

The definitions and arguments that follow are similar to those appearing in \parencite[\S 2.1]{Pandharipande_after_Givental_1998} to define a twisted quantum product.

\begin{definition}
\label{Definition: Z-twisted virtual class}
In the Chow group of $Q^{0+}_{0,2}(X,\beta)$, define the twisted virtual class
\begin{equation}
\label{Equation: Defintion of the twisted virtual class for the hypersurface}
    \left[Q^{0+}_{0,2}(X,\beta)\right]^{\vir}_{X/Z,2} := e(\mathbbm{R}^0\pi_*[u]^*\mathcal{E}(-x_2)) \cap \left[Q^{0+}_{0,2}(X,\beta)\right]^{\vir} .
\end{equation}
Define the operator $L^{X/Z,0+}(z)$ by
%\begin{equation*}
%    L^{X/Z,0+}(z)(\alpha) := \alpha + \sum_{i\in I} \sum_{\beta\in\Eff} q^\beta \left\langle \frac{\alpha}{-z-\psi} , \tilde{T}^i \right\rangle^{X/Z,0+}_{0,\beta} T_i
%\end{equation*}
\begin{equation*}
    L^{X/Z,0+}(z)(\alpha) := \alpha + \sum_{\beta\in\Eff} q^\beta \tilde{ev}_{2*}\left( \frac{ev_1^*(\alpha)}{-z-\psi_1} \cap \left[Q^{0+}_{0,2}(X,\beta)\right]^{\vir}_{X/Z,2} \right)
\end{equation*}
for $\alpha\in H^*_{\CR}(X)$.
\end{definition}

Now, we compare $L^{X/Z,0+}(z)$ and $L^{Z,0+}(z)$.

\begin{proposition}\label{Proposition: Pullback of L^X/Z is equivalent to L^Z of the pullback}
The operators $L^{X/Z,0+}(z)$ and $L^{Z,0+}(z)$ are related as follows 
\begin{equation*}
    j^* \circ L^{X/Z,0+}(z) = L^{Z,0+}(z) \circ j^* .
\end{equation*}
\end{proposition}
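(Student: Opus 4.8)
The plan is to trace through the definitions of both operators and show that the displayed identity reduces to a statement about virtual classes on the quasimap moduli spaces, specifically a compatibility of the twisted class on $Q^{0+}_{0,2}(X,\beta)$ with the ordinary virtual class on $Q^{0+}_{0,2}(Z,\beta)$ under the maps $\tilde j$ in Theorem~\ref{Theorem: Functoriality of the virtual class}. First I would unwind the right-hand side: for $\alpha \in H^*_{\CR}(X)$, one has
\begin{equation*}
    L^{Z,0+}(z)(j^*\alpha) = j^*\alpha + \sum_{\beta\in\Eff} q^\beta\, \tilde{ev}^Z_{2*}\left( \frac{(ev^Z_1)^*(j^*\alpha)}{-z-\psi_1} \cap \left[Q^{0+}_{0,2}(Z,\beta)\right]^{\vir} \right).
\end{equation*}
Using the fiber square of Theorem~\ref{Theorem: Functoriality of the virtual class}, $\tilde j^* ev_i^X = ev_i^Z \circ$ (inclusion), so $(ev^Z_i)^*(j^*\alpha) = \tilde j^*\, ev_i^*(\alpha)$, and the $\psi$-classes pull back as well since the universal coarse curve is pulled back along $\tilde j$.

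The core of the argument is then the projection formula together with Theorem~\ref{Theorem: Functoriality of the virtual class}. We have $[Q^{0+}_{0,2}(Z,\beta)]^{\vir} = 0^!_X [Q^{0+}_{0,2}(X,\beta)]^{\vir}$, and pushing forward along $\tilde j$ on the target side gives $\tilde j_*([Q^{0+}_{0,2}(Z,\beta)]^{\vir}) = e(\mathbbm{R}^0\pi_*[u]^*\mathcal{E}) \cap [Q^{0+}_{0,2}(X,\beta)]^{\vir}$ by Proposition~\ref{Proposition: Quantum Lefschetz}. The subtlety is that the twisted class in Definition~\ref{Definition: Z-twisted virtual class} uses $\mathbbm{R}^0\pi_*[u]^*\mathcal{E}(-x_2)$, twisted down by the \emph{second} marked point, not the full $\mathbbm{R}^0\pi_*[u]^*\mathcal{E}$. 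So I would apply the projection formula in the form
\begin{equation*}
    \tilde j_*\left( \tilde j^*(\gamma) \cap \left[Q^{0+}_{0,2}(Z,\beta)\right]^{\vir} \right) = \gamma \cap \tilde j_*\left[Q^{0+}_{0,2}(Z,\beta)\right]^{\vir} = \gamma \cup e(\mathbbm{R}^0\pi_*[u]^*\mathcal{E}) \cap \left[Q^{0+}_{0,2}(X,\beta)\right]^{\vir},
\end{equation*}
and then account for the discrepancy between $e(\mathbbm{R}^0\pi_*[u]^*\mathcal{E})$ and $e(\mathbbm{R}^0\pi_*[u]^*\mathcal{E}(-x_2))$. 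The divisor sequence $0 \to [u]^*\mathcal{E}(-x_2) \to [u]^*\mathcal{E} \to [u]^*\mathcal{E}|_{x_2} \to 0$, pushed forward via $\pi$ (using $H^1$-vanishing from Lemma~\ref{Lemma: H^1(E) vanishes}), yields $e(\mathbbm{R}^0\pi_*[u]^*\mathcal{E}) = e(\mathbbm{R}^0\pi_*[u]^*\mathcal{E}(-x_2)) \cup e(ev_2^*E)$; so applying $\tilde{ev}_{2*}$ and using the projection/pullback compatibility of $ev_2$ with the Euler class $e(E)$ converts between the two twisted classes, and the factor $e(E)$ pulled back along $ev_2^Z$ vanishes on $Z$ precisely because $ev_2^Z$ lands in $IZ = Z(s)$.

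The main obstacle I anticipate is bookkeeping the role of the $x_2$-twist and making the Euler-class comparison rigorous at the level of cones/obstruction theories rather than just Chow classes — in particular verifying that $\tilde{ev}_{2*}$ of the class with the extra $e(ev_2^*E)$ factor matches $j_* \circ \tilde{ev}^Z_{2*}$ applied to the untwisted $Z$-side, which requires knowing that $ev_2^X \circ \tilde j = j \circ ev_2^Z$ and that $e(ev_2^*E)$ restricted to $Q^{0+}_{0,2}(Z,\beta)$ is the Euler class of the normal bundle appearing in the excess intersection. Once that identification is in place, matching the $\beta = 0$ (identity) terms is immediate, and matching the $q^\beta$ terms reduces to the identity above; pulling the final expression back by $j^*$ and comparing with $j^* \circ L^{X/Z,0+}(z)(\alpha)$ completes the proof.
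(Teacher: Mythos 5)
There is a genuine gap, and it sits exactly at the point your last paragraph glosses over. Your computation runs in the wrong direction: by pushing the $Z$-side class forward along $\tilde j$ and invoking Proposition~\ref{Proposition: Quantum Lefschetz} plus the sequence $0\to \mathbbm{R}^0\pi_*[u]^*\mathcal{E}(-x_2)\to \mathbbm{R}^0\pi_*[u]^*\mathcal{E}\to ev_2^{X*}E_{g_2}\to 0$, what you actually obtain is an identity of the form
\begin{equation*}
j_*\,\tilde{ev}^{Z}_{2*}\!\left(\frac{ev_1^{Z*}j^*\alpha}{-z-\psi_1}\cap\left[Q^{0+}_{0,2}(Z,\beta)\right]^{\vir}\right)
= e(E_{\bullet})\cup \tilde{ev}^{X}_{2*}\!\left(\frac{ev_1^{X*}\alpha}{-z-\psi_1}\cap\left[Q^{0+}_{0,2}(X,\beta)\right]^{\vir}_{X/Z,2}\right),
\end{equation*}
i.e.\ $j_*B=j_*j^*A$ (using $j_*j^*=e(E_\bullet)\cup-$), not $B=j^*A$. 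To pass from this to the Proposition you would need $j_*$ to be injective on the classes involved, which by Assumption~\ref{Assumption: The Poincare pairing on the ambient cohomology of Z is non-degenerate} amounts to knowing in advance that the two-pointed classes on $Z$ lie in $H^*_{\CR,\amb}(Z)$ --- but that is precisely (a consequence of) the statement being proved, so the argument is circular as it stands. ``Pulling the final expression back by $j^*$'' does not rescue it either: $j^*j_*$ produces an extra Euler factor (excess intersection), and your stated cancellation --- that $e(E)$ pulled back along $ev_2^Z$ ``vanishes on $Z$ because $ev_2^Z$ lands in $Z(s)$'' --- is false; the Euler class of $E|_Z$ does not vanish just because the defining section does (the self-intersection formula gives the opposite: restriction of $j_*$-classes \emph{creates} $e(E|_Z)$, it does not kill it).

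The missing idea, which is how the paper proceeds, is to compute $j^*$ of the pushforward from $Q^{0+}_{0,2}(X,\beta)$ honestly, via base change over $ev_2$: one introduces $Q:=Z(ev_2^{X*}s)\subset Q^{0+}_{0,2}(X,\beta)$, the fiber product of $ev_2^X$ with $j$, which is strictly larger than $Q^{0+}_{0,2}(Z,\beta)$, and uses Fulton's compatibilities (Theorems 6.2, 6.3, 6.5) to reduce the Proposition to the refined Gysin identity in the Chow group of $Q$,
\begin{equation*}
j''_{*}\, 0^{!}_{X}\left(\left[Q^{0+}_{0,2}(X,\beta)\right]^{\vir}\right)
= j^{!}\left(e(\mathbbm{R}^0\pi_*[u]^*\mathcal{E}(-x_2))\cap\left[Q^{0+}_{0,2}(X,\beta)\right]^{\vir}\right),
\end{equation*}
proved via the fiber diagram in which, over $Q$, the section $\tilde s$ of $\mathbbm{R}^0\pi_*[u]^*\mathcal{E}$ factors through the subbundle $\mathbbm{R}^0\pi_*[u]^*\mathcal{E}(-x_2)|_Q$ with zero locus $Q^{0+}_{0,2}(Z,\beta)$, together with Theorem~\ref{Theorem: Functoriality of the virtual class}. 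This is exactly where the $(-x_2)$-twist is explained at the cycle level; your proposal never engages with this step, and the coarser pushforward statement you use (Proposition~\ref{Proposition: Quantum Lefschetz}) loses precisely the information needed to conclude.
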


The analogous statement in Gromov-Witten theory appears in \parencite[Proposition~2.4]{Iritani-Quantum_Cohomology_and_periods_2011}. The proof in this case is very similar.

\begin{proof}
To avoid confusion, we use superscripts to distinguish the evaluation maps $ev_i^X: Q_{0,2}^{0+}(X,\beta)\to \bar IX$ and $ev_i^Z: Q_{0,2}^{0+}(Z,\beta)\to \bar IZ$. For $\alpha$ in $H^*(\bar IX)$, we can express $j^* \circ L^{X/Z,0+}(z)(\alpha)$ as
\begin{equation}
\label{Equation: Proof that j^* is a homomorphism - LHS}
   j^*\alpha + \sum_{\beta\in\Eff} q^\beta j^* \tilde{ev}^{X}_{2*}\left( \frac{ev_1^{X*}(\alpha)}{-z-\psi_1} \cup e(\mathbbm{R}^0\pi_*[u]^*\mathcal{E}(-x_2)) \cap \left[Q^{0+}_{0,2}(X,\beta)\right]^{\vir} \right)
\end{equation}
and $L^{Z,0+}(z) \circ j^* (\alpha)$ as
\begin{equation}
    \label{Equation: Proof that j^* is a homomorphism - RHS}
   j^*(\alpha) + \sum_{\beta\in\Eff} q^\beta \tilde{ev}^{Z}_{2*} \left( \frac{ev_1^{Z*}j^*(\alpha)}{-z-\psi_1} \cap \left[Q^{0+}_{0,2}(Z,\beta)\right]^{\vir} \right) .
\end{equation}
To prove the claim we compare \eqref{Equation: Proof that j^* is a homomorphism - LHS} and \eqref{Equation: Proof that j^* is a homomorphism - RHS} term by term. 

Define $Q$ to be the zero locus of the section $ev_2^{X*}s\in \Gamma(Q_{0,2}^{0+}(X,\beta),ev_2^{X*}E)$ and denote the evaluation maps of $Q$ by $ev_1^Q: Q\to \bar I X$ and $ev_2^Q: Q\to \bar I Z$. Label the following projections $\pi_1^X,\pi_2^X: \bar IX \times \bar IX \to \bar IX$, $\pi_1:\bar IX \times \nolinebreak \bar IZ \to \nolinebreak \bar IX$, and $\pi_2:\bar IX\times\bar IZ\to\bar IZ$. Consider the following diagram, where the rectangles are fiber squares
\begin{equation}
\begin{tikzcd}
    \label{Diagram: Proof that j^* is a homomorphism - fiber diagram for IXxIX and IXxIZ}
    Q_{0,2}^{0+}(Z,\beta) \arrow[r,"j''"] \arrow[rd,"\underline{ev}^Z"]
    &Q \arrow[r,"j'"] \arrow[d,"\underline{ev}^Q"]
    &  Q^{0+}_{0,2}(X,\beta) \arrow[d,"\underline{ev}^X"] \\
    &\bar IX \times \bar IZ \arrow[r,"\id\times j"] \arrow[d, "\pi_2"]
    & \bar IX\times \bar IX \arrow[d,"\pi_2^X"]\\
    &\bar IZ \arrow[r,"j"]
    & \bar IX ,
\end{tikzcd}
\end{equation}
where $\underline{ev}^X=(ev_1^X,ev_2^X)$, $\underline{ev}^Q=(ev_1^Q,ev_2^Q)$, and $\underline{ev}^Z=(j\circ ev_1^Z,ev_2^Z)$.

Using the diagram above, we rewrite each term in the sum of \eqref{Equation: Proof that j^* is a homomorphism - LHS}:
\begin{align}
    &j^* \tilde{ev}^X_{2*} \left( \frac{ev_1^{X*}(\alpha)}{-z-\psi_1} \cup e(\mathbbm{R}^0\pi_*[u]^*\mathcal{E}(-x_2)) \cap \left[Q^{0+}_{0,2}(X,\beta)\right]^{\vir} \right) \nonumber \\
    =& \tilde{ev}^Q_{2*} j^! \left( \frac{ev_1^{X*}(\alpha)}{-z-\psi_1} \cup e(\mathbbm{R}^0\pi_*[u]^*\mathcal{E}(-x_2)) \cap \left[Q^{0+}_{0,2}(X,\beta)\right]^{\vir} \right) \nonumber \\
    =& \iota_* \pi_{2*} \underline{ev}^Q_{*} (\id\times j)^! \left( \frac{\underline{ev}^{X*}\pi^{X*}_1(\alpha)}{-z-\psi_1} \cup e(\mathbbm{R}^0\pi_*[u]^*\mathcal{E}(-x_2)) \cap \left[Q^{0+}_{0,2}(X,\beta)\right]^{\vir} \right) \nonumber \\
    =& \iota_* \pi_{2*} (\id\times j)^* \underline{ev}^X_{*} \left( \frac{\underline{ev}^{X*}\pi^{X*}_1(\alpha)}{-z-\psi_1} \cup e(\mathbbm{R}^0\pi_*[u]^*\mathcal{E}(-x_2)) \cap \left[Q^{0+}_{0,2}(X,\beta)\right]^{\vir} \right) \nonumber \\
    =& \iota_* \pi_{2*} \left( (\id\times j)^* \pi^{X*}_1(\alpha) \cup (\id\times j)^*\underline{ev}^X_*\left( \frac{e(\mathbbm{R}^0\pi_*[u]^*\mathcal{E}(-x_2)) \cap \left[Q^{0+}_{0,2}(X,\beta)\right]^{\vir}}{-z-\psi_1} \right)  \right) \nonumber \\
    =& \iota_* \pi_{2*} \left( \pi^{*}_1(\alpha) \cup \underline{ev}^Q_* j^! \left( \frac{e(\mathbbm{R}^0\pi_*[u]^*\mathcal{E}(-x_2)) \cap \left[Q^{0+}_{0,2}(X,\beta)\right]^{\vir}}{-z-\psi_1} \right)  \right) .
    \label{Equation: Proof that j^* is a homomorphism - LHS simplified}
\end{align}
Equalities one, three, and five follow from \parencite[Theorem~6.2(a)]{Fulton_Intersection_theory} and \eqref{Diagram: Proof that j^* is a homomorphism - fiber diagram for IXxIX and IXxIZ}. Equality two is by \parencite[Theorem~6.2(c)]{Fulton_Intersection_theory} and \eqref{Diagram: Proof that j^* is a homomorphism - fiber diagram for IXxIX and IXxIZ}. Equality four is the projection formula.

Let $0_X$ denote the zero section of $\mathbbm{R}^0\pi_*[u]^*\mathcal{E}$ over $Q^{0+}_{0,2}(X,\beta)$. We rewrite each term in~\eqref{Equation: Proof that j^* is a homomorphism - RHS}:
\begin{align}
    &\tilde{ev}^{Z}_{2*} \left( \frac{ev_1^{Z*}j^*(\alpha)}{-z-\psi_1} \cap \left[Q^{0+}_{0,2}(Z,\beta)\right]^{\vir} \right) \nonumber \\
    =& \iota_* \pi_{2*} \underline{ev}^{Z}_{*} \left( \frac{\underline{ev}^{Z*}\pi_1^*(\alpha)}{-z-\psi_1} \cap \left[Q^{0+}_{0,2}(Z,\beta)\right]^{\vir} \right) \nonumber \\
    =& \iota_* \pi_{2*} \left( \pi_1^*(\alpha) \cup \underline{ev}^{Z}_{*} \left( \frac{\left[Q^{0+}_{0,2}(Z,\beta)\right]^{\vir}}{-z-\psi_1} \right) \right) \nonumber \\
    =& \iota_* \pi_{2*} \left( \pi_1^*(\alpha) \cup \underline{ev}^{Q}_{*} j''_* 0^!_X\left( \frac{\left[Q^{0+}_{0,2}(X,\beta)\right]^{\vir}}{-z-\psi_1} \right) \right) .
    \label{Equation: Proof that j^* is a homomorphism - RHS simplified}
\end{align}
The third equality follows by factoring $\underline{ev}^Z$ as $\underline{ev}^Q\circ\nolinebreak j''$, Theorem~\ref{Theorem: Functoriality of the virtual class}, and \parencite[Proposition~6.3]{Fulton_Intersection_theory}.

Compare \eqref{Equation: Proof that j^* is a homomorphism - LHS simplified} and \eqref{Equation: Proof that j^* is a homomorphism - RHS simplified}. We may factor $\psi$-classes out of Gysin maps by \parencite[Proposition~6.3]{Fulton_Intersection_theory}. Hence, to complete the proof it suffices to show the following equality in the Chow group of $Q$:
\begin{equation}
    \label{Equation: Proof that j^* is a homomorphism - final equality}
    j''_{*} 0^{!}_{X} \left( \left[Q^{0+}_{0,2}(X,\beta)\right]^{\vir} \right)
    = j^! \left( e(\mathbbm{R}^0\pi_*[u]^*\mathcal{E}(-x_2)) \cap \left[Q^{0+}_{0,2}(X,\beta)\right]^{\vir} \right) .
\end{equation}

Consider the fiber diagram (also constructed in \parencite[\S 2]{Iritani-Quantum_Cohomology_and_periods_2011}):
\begin{equation}
\label{Diagram: Proof that j^* is a homomorphism - Iritani's fiber diagram}
\begin{tikzcd}
    Q^{0+}_{0,2}(Z,\beta) \arrow[r,"j''"] \arrow[d,"j''"]
        & Q \arrow[r] \arrow[d,"\tilde{s}_Q"]
        & Q^{0+}_{0,2}(X,\beta) \arrow[dd,"\tilde{s}"] \\
    Q \arrow[r, "0_{Q}"] \arrow[d]
        & \mathbbm{R}^0\pi_*[u]^*\mathcal{E}(-x_2)|_Q \arrow[d,"f"] \\
    Q^{0+}_{0,2}(X,\beta) \arrow[r,"0'_{X}"]
        & \mathbbm{R}^0\pi_*[u]^*\mathcal{E}(-x_2) \arrow[r,"h"]
        & \mathbbm{R}^0\pi_*[u]^*\mathcal{E} .
\end{tikzcd}
\end{equation}
The morphisms $0_Q$ and $0'_X$ are the zero sections, $f$ and $h$ are the natural inclusions, and $\tilde{s}_Q$ is the section induced by $s$. We then have the following:
\begin{align*}
    j''_* 0^{!}_{X} \left( \left[Q^{0+}_{0,2}(X,\beta)\right]^{\vir} \right)
    &= j''_* 0'^{!}_{X} h^! \left( \left[Q^{0+}_{0,2}(X,\beta)\right]^{\vir} \right) \\
    &= 0^*_{Q} \tilde{s}_{Q*} h^! \left( \left[Q^{0+}_{0,2}(X,\beta)\right]^{\vir} \right) \\
    &= h^! \left( e(\mathbbm{R}^0\pi_*[u]^*\mathcal{E}(-x_2)) \cap \left[Q^{0+}_{0,2}(X,\beta)\right]^{\vir} \right) .
\end{align*}
The first equality is \parencite[Theorem~6.5]{Fulton_Intersection_theory}. The second is \parencite[Theorem~6.2(a)]{Fulton_Intersection_theory}.

To verify \eqref{Equation: Proof that j^* is a homomorphism - final equality} it now suffices to show that $h^!$ equals $j^!$. This can be seen from the following commutative diagram, where the front, back, top, and bottom faces are fiber squares: 
\begin{equation*}
\begin{tikzcd}
    & Q  \arrow[rr,"j'"] \arrow[dd] \arrow[dl,"f \circ \tilde{s}_Q",swap] 
    & & Q^{0+}_{0,2}(X,\beta)  \arrow[dd,"ev_2^{X}"] \arrow[dl,"\tilde{s}"] \\
    \mathbbm{R}^0\pi_*[u]^*\mathcal{E}(-x_2)  \arrow[rr,crossing over,"h",pos=.63] \arrow[dd] 
    & & \mathbbm{R}^0\pi_*[u]^*\mathcal{E} \arrow[dd,"\tilde{s}|_{x_2}",pos=.25] \\
    & \bar IZ  \arrow[rr,"j",pos=.3] \arrow[dl,"j",swap] 
    & &  \bar IX  \arrow[dl,"s",swap] \\
    \bar IX \arrow[rr,"0_{E}"] 
    & & \bar IE . \ar[from=uu,crossing over]
\end{tikzcd}
\end{equation*}
The morphism $\tilde{s}|_{x_2}$ is defined by the composition 
\begin{equation*}
    \mathbbm{R}^0\pi_*[u]^*\mathcal{E} \longrightarrow ev_2^{X*}E \longrightarrow \bar IE .
\end{equation*}

Applying \parencite[Theorem~6.2(c)]{Fulton_Intersection_theory} twice completes the proof,
\begin{align*}
    &h^! \left( e(\mathbbm{R}^0\pi_*[u]^*\mathcal{E}(-x_2)) \cap \left[Q^{0+}_{0,2}(X,\beta)\right]^{\vir} \right) \\
    =& 0_E^! \left( e(\mathbbm{R}^0\pi_*[u]^*\mathcal{E}(-x_2)) \cap \left[Q^{0+}_{0,2}(X,\beta)\right]^{\vir} \right) \\
    =& j^! \left( e(\mathbbm{R}^0\pi_*[u]^*\mathcal{E}(-x_2)) \cap \left[Q^{0+}_{0,2}(X,\beta)\right]^{\vir} \right) .
\end{align*}
\end{proof}

% --------------------------------------------------------------
%                   The total space
% --------------------------------------------------------------

\section{The total space}
\label{Section: The total space}
In this section we consider quasimaps to $E^\vee$.  Similar to the previous section, we will compute the quasimap invariants of $E^\vee$ in terms of integrals over the moduli space of $0+$-stable quasimaps to $X$.

\begin{definition}\label{Definition: qmaps to E dual}
Let $\mathcal{E}^\vee \to \mathfrak{X}$ be a vector bundle. Define the moduli space of $k$-pointed, genus-$g$, $0+$-stable quasimaps to $E^\vee$ of degree $\beta$ as
$$Q_{g,k}^{0+}(E^\vee,\beta):=\operatorname{tot}(\pi_* [u]^* \mathcal{E}^\vee ):= \Spec  \operatorname{Sym} \mathbbm{R}^1 \pi_* \left([u]^* \mathcal{E} \otimes \omega_{\pi}\right) .$$
There is a natural perfect obstruction theory on $Q_{g,k}^{0+}(E^\vee,\beta)$ obtained by pulling back the obstruction theory on $Q_{g,k}^{0+}(X,\beta)$ and taking the direct sum with $(\mathbbm{R}^\bullet \pi_*[u]^* \mathcal{E}^\vee)^\vee$.  This yields a virtual fundamental class $[Q_{g,k}^{0+}(E^\vee,\beta)]^{\vir}$.  When the evaluation maps 
$$ev_i: Q_{g,k}^{0+}(E^\vee,\beta) \to \bar I E^\vee$$
are proper, we can define quasimap invariants as in Definition~\ref{nonproper brackets}.
\end{definition}

As in the previous section we will assume that $\mathcal E$ is weakly convex. Then recall by Theorem~\ref{Theorem: Weak concavity} that $\mathcal{E}^\vee \to \mathfrak{X}$ is weakly concave.

On the universal curve over $Q_{0,2}^{0+}(X,\beta)$, consider the divisor given by the first marked point. Tensor the divisor exact sequence by $[u]^*\mathcal{E}^\vee$ and apply $\mathbbm{R}\pi_*(-)$ to get the long exact sequence
\begin{equation}
\label{Equation: SES for Euler class relation for L^X/Y(z) proof}
\begin{tikzcd}
    &0 \arrow[r]
    & \mathbbm{R}^0\pi_*[u]^*\mathcal{E}^\vee \arrow[r]
    & \mathbbm{R}^0\pi_*[u]^*\mathcal{E}^\vee|_{x_1} \arrow[r]
    & ~ \\
    ~ \arrow[r]
    & \mathbbm{R}^1\pi_*[u]^*\mathcal{E}^\vee(-x_1) \arrow[r]
    & \mathbbm{R}^1\pi_*[u]^*\mathcal{E}^\vee \arrow[r]
    & 0 .
\end{tikzcd}    
\end{equation}

Fix an open and closed subspace $ev_1^{-1}(X_g^\vee) \subset Q^{0+}_{0,2}(X,\beta)$.  Let $G_{x_1}$ denote the isotropy group at $x_1$ for a point $(C,x_1,x_2,[u])$ in this subspace.  Recall that $[u]^*\mathcal{E}$ splits as a direct sum of line bundles $\oplus_{i=1}^r \mathcal{L}_i$.  The line bundle $\mathbbm{R}^0\pi_*\mathcal{L}_{i}^\vee|_{x_1}$ is nonzero if and only if $G_{x_1}$ acts trivially on the fiber of $\mathcal{L}_i^\vee$ at $x_1$.  We conclude that on a given open and closed subset $ev_1^{-1}(X_g^\vee) \subset Q^{0+}_{0,2}(X,\beta)$, 
\begin{equation}
\label{e: restricted to x1} 
    \mathbbm{R}^0\pi_*[u]^*\mathcal{E}^\vee|_{x_1} = ev_1^*E_g^\vee.
\end{equation}

Combining \eqref{Equation: SES for Euler class relation for L^X/Y(z) proof} and \eqref{e: restricted to x1}, we obtain the following equality in $K$-theory:
\begin{equation}
\label{e: K theory equality}
    ev_1^*E_g^\vee \ominus \mathbbm{R}\pi_*[u]^*\mathcal{E}^\vee = \mathbbm{R}^1\pi_*[u]^*\mathcal{E}^\vee(-x_1) .
\end{equation}

\begin{proposition}
If $\mathcal E^\vee$ is weakly concave, then the evaluation maps $$ev_i: Q^{0+}_{0,2}(E^\vee,\beta) \to \bar I E^\vee$$ are proper.
\end{proposition}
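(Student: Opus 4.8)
The plan is to reduce properness of the evaluation maps $ev_i \colon Q^{0+}_{0,2}(E^\vee,\beta) \to \bar I E^\vee$ to properness of the corresponding evaluation maps on $Q^{0+}_{0,2}(X,\beta)$, which was already observed (via \parencite[Theorem~2.7]{Cheong-Ciocan-Fontanine-Kim_2015} and the fact that $Q^{0+}_{0,2}(X,\beta)$ is proper over $\underline{X}_0$) to hold. Recall from Definition~\ref{Definition: qmaps to E dual} that $Q_{0,2}^{0+}(E^\vee,\beta)$ is the total space of the sheaf $\mathbbm{R}^0\pi_*[u]^*\mathcal{E}^\vee$ over $Q^{0+}_{0,2}(X,\beta)$; since $\mathcal{E}^\vee$ is weakly concave, Theorem~\ref{Theorem: Weak concavity} combined with the defining vanishing $H^0(C,[u]^*\mathcal{E}^\vee(-x_1)) = 0$ — and more to the point $H^0(C,[u]^*\mathcal{E}^\vee)$, which need not vanish — means this sheaf is in general a vector bundle only after restricting to strata; however, what we really need is its behavior \emph{at the marked point}. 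By \eqref{e: restricted to x1}, on each open and closed piece $ev_1^{-1}(X_g^\vee)$ the restriction $\mathbbm{R}^0\pi_*[u]^*\mathcal{E}^\vee|_{x_1}$ is identified with $ev_1^*E_g^\vee$, which is a genuine vector bundle of finite rank.

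First I would unwind the definition of $ev_1$ on $Q^{0+}_{0,2}(E^\vee,\beta)$: a point is a pair $(C,x_1,x_2,[u],\sigma)$ where $\sigma \in H^0(C,[u]^*\mathcal{E}^\vee)$, and $ev_1$ sends this to the point of $\bar I E^\vee$ lying over $ev_1^X(C,x_1,x_2,[u]) \in \bar I X$ determined by the value $\sigma(x_1) \in ([u]^*\mathcal{E}^\vee)_{x_1}$. Working over a fixed component $ev_1^{-1}(X_g^\vee)$, the target $\bar I E^\vee$ decomposes accordingly and the relevant twisted sector is $E_g^\vee$, the total space of $E_g^\vee \to X_g$; the evaluation map fits into a commutative square over $ev_1^X \colon Q^{0+}_{0,2}(X,\beta)|_{X_g^\vee} \to X_g$, with the vertical maps being the projections from the respective total spaces and the induced map on fibers being the linear evaluation-at-$x_1$ map $\mathbbm{R}^0\pi_*[u]^*\mathcal{E}^\vee \to \mathbbm{R}^0\pi_*[u]^*\mathcal{E}^\vee|_{x_1} = ev_1^*E_g^\vee$ from \eqref{Equation: SES for Euler class relation for L^X/Y(z) proof}.

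The key step is then to show this evaluation-at-$x_1$ morphism of sheaves on $Q^{0+}_{0,2}(X,\beta)|_{X_g^\vee}$ is \emph{surjective} with kernel a vector bundle, so that on total spaces the map $Q^{0+}_{0,2}(E^\vee,\beta) \to ev_1^*(\operatorname{tot} E_g^\vee)$ is the (relative) total space of a vector bundle, in particular affine, and hence separated and of finite type; this surjectivity is exactly the statement that the connecting map to $\mathbbm{R}^1\pi_*[u]^*\mathcal{E}^\vee(-x_1)$ in \eqref{Equation: SES for Euler class relation for L^X/Y(z) proof} has the preceding arrow surjective — equivalently, that the relevant portion of the long exact sequence realizes $\mathbbm{R}^0\pi_*[u]^*\mathcal{E}^\vee|_{x_1}$ as a quotient, which holds termwise by weak concavity since $H^1(C,[u]^*\mathcal{E}^\vee(-x_1))$ is Serre-dual to $H^0(C,[u]^*\mathcal{E}(x_1)\otimes\omega_C) = H^0(C,[u]^*\mathcal{E})^\vee$ after using $\omega_C \cong \mathcal{O}_C(-x_1-x_2)$ as in the proof of Theorem~\ref{Theorem: Weak concavity}. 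Granting this, one composes: $ev_i$ factors as the affine (hence proper-preserving under the right finiteness) morphism $Q^{0+}_{0,2}(E^\vee,\beta) \to ev_1^*(\bar I E^\vee)$ followed by the base change of $ev_1^X \colon Q^{0+}_{0,2}(X,\beta) \to \bar I X$ along $\bar I E^\vee \to \bar I X$, and the latter is proper because $ev_1^X$ is. For $ev_2$ one argues identically using the marked point $x_2$ in place of $x_1$, invoking the analogous long exact sequence; alternatively one notes $ev_2 = (ev_2^X \text{-lift})$ and that the forgetful map $Q^{0+}_{0,2}(E^\vee,\beta) \to Q^{0+}_{0,2}(X,\beta)$ together with $ev_2^X$ already controls it once properness of the first factor is established.

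I expect the main obstacle to be handling the failure of $\mathbbm{R}^0\pi_*[u]^*\mathcal{E}^\vee$ to be a vector bundle (its rank can jump), so that $Q^{0+}_{0,2}(E^\vee,\beta)$ is the total space of a coherent sheaf rather than a bundle; the point is that this is harmless for properness, since $\operatorname{Spec}\operatorname{Sym}$ of any coherent sheaf is affine over the base, hence separated and finite type, and one only needs the \emph{universally closed} part — which is inherited from the ``linear along $x_1$'' structure and the valuative criterion: a family of sections of $[u]^*\mathcal{E}^\vee$ over a punctured trap, with bounded value at $x_1$ (bounded because the image in $\bar I E^\vee$ is fixed, using properness there), extends, because weak concavity forces $H^0(C,[u]^*\mathcal{E}^\vee(-x_1)) = 0$ so a section is determined by finitely many of its Taylor coefficients at $x_1$ up to a bounded-rank ambiguity governed by $\mathbbm{R}^1\pi_*[u]^*\mathcal{E}^\vee(-x_1)$, which is itself a vector bundle by the $K$-theory identity \eqref{e: K theory equality}. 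Making this valuative-criterion argument precise — rather than the cleaner but slightly false ``it's a vector bundle'' argument — is where the real care is needed.
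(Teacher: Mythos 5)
Your central step points the wrong way, and as stated it would contradict the proposition rather than prove it. Weak concavity says $H^0(C,[u]^*\mathcal{E}^\vee(-x_1))=0$, so in the long exact sequence \eqref{Equation: SES for Euler class relation for L^X/Y(z) proof} the evaluation-at-$x_1$ map $\mathbbm{R}^0\pi_*[u]^*\mathcal{E}^\vee \to \mathbbm{R}^0\pi_*\left([u]^*\mathcal{E}^\vee|_{x_1}\right)=ev_1^*E_g^\vee$ is \emph{injective}, not surjective: its cokernel sits inside $\mathbbm{R}^1\pi_*[u]^*\mathcal{E}^\vee(-x_1)$, which is precisely the twisting bundle of Definition~\ref{Definition: Y-twisted virtual class} and has positive rank in general (Lemma~\ref{Lemma: Rank computation}). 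Your Serre-duality justification of surjectivity misidentifies the dual space: $H^1(C,[u]^*\mathcal{E}^\vee(-x_1))$ is dual to $H^0(C,[u]^*\mathcal{E}(x_1)\otimes\omega_C)=H^0(C,[u]^*\mathcal{E}(-x_2))$, the fiber of the quantum Lefschetz bundle $\mathbbm{R}^0\pi_*[u]^*\mathcal{E}(-x_2)$, which does not vanish in general; weak concavity is an $H^0$-vanishing, not an $H^1$-vanishing. Moreover, even granting your claimed structure, the strategy cannot work: if $Q^{0+}_{0,2}(E^\vee,\beta)\to (ev_1^X)^*\bar IE^\vee$ were an affine bundle of positive relative dimension, its composition with the proper projection to $\bar IE^\vee$ could not be proper (if $g\circ f$ is proper and $g$ is separated, then $f$ is proper), so ``affine, hence separated and of finite type'' buys nothing toward universal closedness; the valuative-criterion patch in your last paragraph still leans on a ``bounded-rank ambiguity'' that does not exist.

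The correct use of weak concavity is the injectivity you already have in hand: a section of $[u]^*\mathcal{E}^\vee$ over $C$ is determined by its value at $x_1$, so by \eqref{Equation: SES for Euler class relation for L^X/Y(z) proof} and \eqref{e: restricted to x1} the space $Q^{0+}_{0,2}(E^\vee,\beta)=\operatorname{tot}(\pi_*[u]^*\mathcal{E}^\vee)$ embeds as a \emph{closed} substack of $\operatorname{tot}\left(\pi_*([u]^*\mathcal{E}^\vee|_{x_1})\right)=(ev_1^X)^*\bar IE^\vee$, the fiber product of $ev_1^X$ with $\bar IE^\vee\to\bar IX$. Then $ev_1^{E^\vee}$ factors as this closed immersion followed by the projection $(ev_1^X)^*\bar IE^\vee\to\bar IE^\vee$, which is a base change of the proper map $ev_1^X$; the composite is proper. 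This is the paper's argument, and the symmetric statement at $x_2$ handles $ev_2$. Note that no vector-bundle or rank-constancy hypothesis on $\mathbbm{R}^0\pi_*[u]^*\mathcal{E}^\vee$ is needed: the sheaf-level injectivity alone produces the closed immersion of total spaces, so the rank-jumping you worried about is a non-issue once the map is run in the right direction.
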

\begin{proof}
For notational simplicity we consider $ev_1$.  By \eqref{Equation: SES for Euler class relation for L^X/Y(z) proof} and \eqref{e: restricted to x1}, there is a closed immersion
$$Q_{g,k}^{0+}(E^\vee,\beta)=\operatorname{tot}(\pi_* [u]^* \mathcal{E}^\vee )\hookrightarrow  \operatorname{tot}(\pi_*([u]^*\mathcal{E}^\vee|_{x_1})) = (ev_1^X)^* \bar I E^\vee,$$ where $(ev_1^X)^* \bar I E^\vee$  denotes the fiber product of  $ev_1^X: Q_{g,k}^{0+}(X,\beta) \to \bar I X$ and $\bar IE^\vee \to \bar IX$.  The evaluation map $ev_1^{E^\vee}$ factors as
$$Q_{g,k}^{0+}(E^\vee,\beta) \hookrightarrow (ev_1^X)^* \bar I E^\vee \to \bar I E^\vee.$$  This composition is  proper because $ev_1^X$ is.
\end{proof}

By weak concavity, the sheaf $\mathbbm{R}^1\pi_*[u]^*\mathcal{E}^\vee(-x_1)$ is a vector bundle on $Q_{0,2}^{0+}(X,\beta)$. This allows the following definition.
\begin{definition}
\label{Definition: Y-twisted virtual class}
Define the $\mathcal{E}^\vee$-twisted virtual class to be
\begin{equation}
\label{Equation: Defintion of the twisted virtual class for the total space}
    \left[Q^{0+}_{0,2}(X,\beta)\right]^{\vir}_{X/E^\vee,1} := e(\mathbbm{R}^1\pi_*[u]^*\mathcal{E^\vee}(-x_1)) \cap \left[Q^{0+}_{0,2}(X,\beta)\right]^{\vir} .
\end{equation}
Define the operator $L^{X/E^\vee,0+}(z)$ by
\begin{align*}
    L^{X/E^\vee,0+}(z)(\alpha) := \alpha + \sum_{\beta \in\Eff} q^\beta \tilde{ev}_{2*}\left( \frac{ev_1^*(\alpha)}{-z-\psi} \cap \left[Q^{0+}_{0,2}(X,\beta)\right]^{\vir}_{X/E^\vee,1} \right) ,
\end{align*}
for $\alpha\in H^*_{\CR}(X)$.
\end{definition}

We will make use of the commutative diagram
\begin{equation}\label{comD}
\begin{tikzcd}
   Q^{0+}_{0,2}(X,\beta) \arrow[r, "\tilde i"] \arrow[d,"ev_i^X"]
    &  Q^{0+}_{0,2}(E^\vee,\beta) \arrow[d,"ev_i^{E^\vee}"] 
     \\
    \bar IX \arrow[r, hook, "i"]
    & \bar I E^\vee ,
\end{tikzcd}
\end{equation}
where the $i$th evaluation maps on $Q^{0+}_{0,2}(X,\beta)$ and $Q^{0+}_{0,2}(E^\vee,\beta)$ are denoted by $ev^X_i$ and $ev_i^{E^\vee}$ respectively.

\begin{proposition}\label{prop: this fits here}
Given $\gamma_1, \gamma_2 \in H^*_{CR, \ct}(E^\vee)$, choose $\alpha_1, \alpha_2 \in H^*_{CR}(X)$ such that $\gamma_i = i_*(\alpha_i)$.  We have the following equality:
\begin{equation}
\label{Equation: Two-pointed invariant of E-dual first}
    \left\langle \gamma_1 \psi_1^{a_1}, \gamma_2 \psi_2^{a_2} \right\rangle^{E^\vee,0+}_{0,\beta}
    = \int_{\left[Q_{0,2}^{0+}(X,\beta)\right]^{\vir}_{X/E^\vee,1}} ev_1^{X *} (\alpha_1) \psi_1^{a_1} \cup ev_2^{X *} (\alpha_2 \cup e(E^\vee_{g_2}) ) \psi_2^{a_2} .
\end{equation}
\end{proposition}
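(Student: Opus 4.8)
The plan is to unwind the left-hand side using the definitions of quasimap invariants for the non-proper target $E^\vee$ (Definition~\ref{nonproper brackets}) and of the compact type pairing (Definition~\ref{ctpairing}), and then to transport the resulting integral over $Q^{0+}_{0,2}(E^\vee,\beta)$ to one over $Q^{0+}_{0,2}(X,\beta)$ by combining the square~\eqref{comD} with the description of $[Q^{0+}_{0,2}(E^\vee,\beta)]^{\vir}$ furnished by~\eqref{Equation: SES for Euler class relation for L^X/Y(z) proof}. First I would write $\gamma_i = i_*(\alpha_i)$ and take $i_*^{\cs}(\alpha_2)$ as a compactly supported lift of $\gamma_2$ — legitimate since $\phi\circ i_*^{\cs} = i_*$, as noted in the proof of Lemma~\ref{Lemma: Narrow cohomology isomorphism}. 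Using $\tilde{ev}_2 = \iota\circ ev_2$, the identity $\iota\circ i = i\circ\iota$, the projection formula, $\iota^2 = \id$, and $\int_{IE^\vee}\circ\, i_*^{\cs} = \int_{IX}$ (valid because $X$ is proper), the compact type pairing defining the left-hand side collapses to $\big\langle i^*\,\tilde{ev}^{E^\vee}_{2*}\big(ev_1^{E^\vee*}(\gamma_1)\,\psi_1^{a_1}\psi_2^{a_2}\cap[Q^{0+}_{0,2}(E^\vee,\beta)]^{\vir}\big),\,\alpha_2\big\rangle^{X}$.

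Next I would observe that~\eqref{comD} for the second marked point is in fact Cartesian. Weak concavity holds at $x_2$ as well as at $x_1$ (relabeling the markings of a $0+$-stable quasimap again yields one, so Theorem~\ref{Theorem: Weak concavity} gives $H^0(C,[u]^*\mathcal{E}^\vee(-x_2)) = 0$); hence a quasimap to $E^\vee$ whose value at $x_2$ lies on the zero section is the zero section, and the fibre product of $ev_2^{E^\vee}$ and $i$ is exactly $Q^{0+}_{0,2}(X,\beta)$, embedded by $\tilde i$. Since $ev^{E^\vee}_2$ is proper, the compatibility of refined Gysin maps with proper pushforward (\parencite[Theorem~6.2]{Fulton_Intersection_theory}) converts $i^*\circ\tilde{ev}^{E^\vee}_{2*}$ into $\tilde{ev}^X_{2*}\circ i^!$, with $i^!\colon A_*(Q^{0+}_{0,2}(E^\vee,\beta))\to A_*(Q^{0+}_{0,2}(X,\beta))$ the refined Gysin of the regular embedding $i$; combined with the projection formula for the proper map $ev_2^X$, the left-hand side becomes $\int_{Q^{0+}_{0,2}(X,\beta)} ev_2^{X*}(\alpha_2)\cup i^!\big(ev_1^{E^\vee*}(\gamma_1)\,\psi_1^{a_1}\psi_2^{a_2}\cap[Q^{0+}_{0,2}(E^\vee,\beta)]^{\vir}\big)$.

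The crux is the evaluation of this refined Gysin. Since $i^!$ commutes with capping by Chern classes it slides past $\psi_1^{a_1}\psi_2^{a_2}$ and sends $ev_1^{E^\vee*}(\gamma_1)$ to $\tilde i^*\, ev_1^{E^\vee*}(i_*\alpha_1) = ev_1^{X*}(\alpha_1\cup e(E^\vee_{g_1}))$ by the self-intersection formula along the zero section $i$ (using the $i=1$ case of~\eqref{comD}; on twisted sectors where the isotropy acts nontrivially on the fibres this is the identity, consistent with the Euler class of the rank-zero bundle being $1$). It remains to compute $i^!\big([Q^{0+}_{0,2}(E^\vee,\beta)]^{\vir}\big)$: here I would factor the Cartesian square through $P' := (ev_2^X)^*\bar I E^\vee = \operatorname{tot}(ev_2^{X*}E^\vee_{g_2})$, the total space of a genuine vector bundle on $Q^{0+}_{0,2}(X,\beta)$, into which $Q^{0+}_{0,2}(E^\vee,\beta)$ embeds (the $x_2$-analogue of the closed immersion built just before~\eqref{e: K theory equality}) as the zero locus of a section of $p_{P'}^*\,\mathbbm{R}^1\pi_*[u]^*\mathcal{E}^\vee(-x_2)$. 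The lower square ($Q^{0+}_{0,2}(X,\beta)\hookrightarrow P'$ vs. $\bar IX\hookrightarrow\bar IE^\vee$) has matching normal bundles, so $i^!$ through it is the clean Gysin along the zero section of $P'$; combined with the functoriality of the virtual class — the $E^\vee$-analogue of Theorem~\ref{Theorem: Functoriality of the virtual class}, giving $j'_*[Q^{0+}_{0,2}(E^\vee,\beta)]^{\vir} = e\big(p_{P'}^*\mathbbm{R}^1\pi_*[u]^*\mathcal{E}^\vee(-x_2)\big)\cap p_{P'}^*[Q^{0+}_{0,2}(X,\beta)]^{\vir}$ — this yields $i^!\big([Q^{0+}_{0,2}(E^\vee,\beta)]^{\vir}\big) = e\big(\mathbbm{R}^1\pi_*[u]^*\mathcal{E}^\vee(-x_2)\big)\cap[Q^{0+}_{0,2}(X,\beta)]^{\vir}$. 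The computation so far carries $e(E^\vee_{g_1})$ on the first insertion and $e(\mathbbm{R}^1\pi_*[u]^*\mathcal{E}^\vee(-x_2))$, while~\eqref{Equation: Two-pointed invariant of E-dual first} carries $e(E^\vee_{g_2})$ on the second insertion and $e(\mathbbm{R}^1\pi_*[u]^*\mathcal{E}^\vee(-x_1))$; these are reconciled by Serre duality in $K$-theory, since~\eqref{e: K theory equality} and its $x_2$-analogue give $\mathbbm{R}^1\pi_*[u]^*\mathcal{E}^\vee(-x_1)\ominus\mathbbm{R}^1\pi_*[u]^*\mathcal{E}^\vee(-x_2) = ev_1^{X*}E^\vee_{g_1}\ominus ev_2^{X*}E^\vee_{g_2}$, hence $e(E^\vee_{g_1})\cdot e(\mathbbm{R}^1\pi_*[u]^*\mathcal{E}^\vee(-x_2)) = e(\mathbbm{R}^1\pi_*[u]^*\mathcal{E}^\vee(-x_1))\cdot e(E^\vee_{g_2})$; substituting this together with the definition of $[Q^{0+}_{0,2}(X,\beta)]^{\vir}_{X/E^\vee,1}$ (Definition~\ref{Definition: Y-twisted virtual class}) produces~\eqref{Equation: Two-pointed invariant of E-dual first}.

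I expect the main obstacle to be the third step: pinning down the relation between $[Q^{0+}_{0,2}(E^\vee,\beta)]^{\vir}$ and $[Q^{0+}_{0,2}(X,\beta)]^{\vir}$ and running the refined-Gysin bookkeeping cleanly. The subtle point is that $\mathbbm{R}^0\pi_*[u]^*\mathcal{E}^\vee$ need not be locally free — its $h^0$ can jump, since weak convexity only forces the vanishing of $H^1(C,[u]^*\mathcal{E})\cong H^0(C,[u]^*\mathcal{E}^\vee\otimes\omega_C)^\vee$, not of $H^1(C,[u]^*\mathcal{E}^\vee)$ — so $Q^{0+}_{0,2}(E^\vee,\beta)\to Q^{0+}_{0,2}(X,\beta)$ is merely a cone and its virtual class is not simply a pullback; one is forced to work through the honest vector bundle $P'$ and invoke a Kim-style functoriality of virtual classes, tracking at each stage which Gysin map (clean, versus carrying an excess Euler class) is in play — precisely the kind of intersection-theoretic juggling already carried out in the proof of Proposition~\ref{Proposition: Pullback of L^X/Z is equivalent to L^Z of the pullback}.
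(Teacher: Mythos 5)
Your argument is correct in outline but follows a genuinely different route from the paper. The paper's proof is equivariant: it puts the fiber-scaling $\C^*$-action on $E^\vee$, invokes the Graber--Pandharipande virtual localization formula to write $[Q^{0+}_{0,2}(E^\vee,\beta)]^{\C^*,\vir}$ as $\tilde i_*$ of $[Q^{0+}_{0,2}(X,\beta)]^{\C^*,\vir}$ divided by $e_{\C^*}(\mathbbm{R}\pi_*[u]^*\mathcal{E}^\vee)$, uses the projection formula together with the $K$-theory identity \eqref{e: K theory equality} at $x_1$, and takes the non-equivariant limit (which exists because $\mathbbm{R}^1\pi_*[u]^*\mathcal{E}^\vee(-x_1)$ is an honest bundle by weak concavity). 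You avoid localization entirely: you unwind Definition~\ref{nonproper brackets} via the compact-type pairing with the lift $i_*^{\cs}(\alpha_2)$, exploit the Cartesian square at the second marking, and push the computation through refined Gysin maps in the style of the proof of Proposition~\ref{Proposition: Pullback of L^X/Z is equivalent to L^Z of the pullback}. Several of your supporting observations are correct and are not needed in the paper's proof: weak concavity also holds at $x_2$ (relabeling markings preserves $0+$-stability, so $H^0(C,[u]^*\mathcal{E}^\vee(-x_2))=0$), which is what makes your square at $x_2$ Cartesian and gives the two-term resolution $[ev_2^{X*}E^\vee_{g_2}\to\mathbbm{R}^1\pi_*[u]^*\mathcal{E}^\vee(-x_2)]$; and your final Euler-class identity $e(ev_1^{X*}E^\vee_{g_1})\cup e(\mathbbm{R}^1\pi_*[u]^*\mathcal{E}^\vee(-x_2))=e(ev_2^{X*}E^\vee_{g_2})\cup e(\mathbbm{R}^1\pi_*[u]^*\mathcal{E}^\vee(-x_1))$ does follow from \eqref{e: K theory equality} and its $x_2$-analogue, since all four classes are genuine bundles of matching total rank on each component $Q^{0+}_{0,g_1,g_2}(X,\beta)$, so one can pass through total Chern classes and take top parts. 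What each approach buys: yours is non-equivariant and makes the geometric mechanism (restriction of sections to a marked point) explicit; the paper's packages the entire comparison of virtual classes into the single citation \eqref{e:localization} and only ever needs concavity at $x_1$.

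The one load-bearing step you assert rather than prove is the identity $j'_*[Q^{0+}_{0,2}(E^\vee,\beta)]^{\vir}=e\bigl(p_{P'}^*\mathbbm{R}^1\pi_*[u]^*\mathcal{E}^\vee(-x_2)\bigr)\cap p_{P'}^*[Q^{0+}_{0,2}(X,\beta)]^{\vir}$ in $P'=\operatorname{tot}(ev_2^{X*}E^\vee_{g_2})$. Be aware that this is not literally an instance of Theorem~\ref{Theorem: Functoriality of the virtual class} (Kim-style functoriality for maps to a zero locus in the target); it is a statement about the virtual class of the abelian cone $Q^{0+}_{0,2}(E^\vee,\beta)$ with the direct-sum obstruction theory of Definition~\ref{Definition: qmaps to E dual}, realized as the zero locus of the tautological composite section on $P'$. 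It is true and provable — given the two-term resolution by bundles one can argue by cone reduction/virtual pullback, or simply run the paper's equivariant localization once to establish exactly this class identity — but as written it is the gap you would need to close; the paper's use of \eqref{e:localization} is precisely the device that supplies this comparison without further argument.
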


\begin{proof}
The claim is most easily seen via virtual localization.  
Consider the $\C^*$-action on $\mathcal{E}^\vee$ given by scaling fibers.  This induces actions on $\bar IE^\vee$ and $Q^{0+}_{0,2}(E^\vee,\beta)$, with fixed loci $\bar IX$ and  $Q^{0+}_{0,2}(X,\beta)$ respectively.  All the maps in Diagram~\eqref{comD} are $\C^*$-equivariant.
By~(1) of~\cite{Graber-Pand}, we have the equality
\begin{equation}\label{e:localization} \left[Q^{0+}_{0,2}(E^\vee,\beta)\right]^{\C^*, \vir} = \tilde i_* \left( \frac{\left[Q^{0+}_{0,2}(X,\beta)\right]^{\C^*, \vir}}{e_{\C^*}(\mathbbm{R}\pi_*[u]^*\mathcal{E}^\vee )} \right),\end{equation}
where $[ - ]^{\C^*, \vir}$ denotes the $\C^*$-equivariant virtual fundamental class.

Choose equivariant lifts of $\gamma_1$, $\gamma_2$, $\alpha_1$, and $\alpha_2$ (by abuse of notation we will not change their labels).  Using the fact that $\gamma_2$ is the pushforward of a class $\alpha_2$ supported on $IX$, a localization argument shows that the left hand side of \eqref{Equation: Two-pointed invariant of E-dual first} is equal to the non-equivariant limit of 
$$\int_{\left[Q_{0,2}^{0+}(E^\vee,\beta)\right]^{\C^*,\vir}} ev_1^{E^\vee *} (\gamma_1)\psi_1^{a_1} \cup ev_2^{E^\vee *} (\gamma_2) \psi_2^{a_2}.$$
This can be rewritten as
\begin{align*}
    & \int_{\left[Q_{0,2}^{0+}(X,\beta)\right]^{\C^*,\vir}} \frac{ev_1^{X *} (\alpha_1 \cup e_{\C^*}(E^\vee_g) ) \psi_1^{a_1} \cup ev_2^{X *} (\alpha_2 \cup e_{\C^*}(E^\vee_g) ) \psi_2^{a_2}}{e_{\C^*}(\mathbbm{R}\pi_*[u]^*\mathcal{E}^\vee)} \\
    =& \int_{\left[Q_{0,2}^{0+}(X,\beta)\right]^{\C^*,\vir}} ev_1^{X *} (\alpha_1) \psi_1^{a_1} \cup ev_2^{X *} (\alpha_2 \cup e_{\C^*}(E^\vee_g) ) \psi_2^{a_2} \cup \frac{e_{\C^*}(ev_1^{X*}E^\vee_g)}{e_{\C^*}(\mathbbm{R}\pi_*[u]^*\mathcal{E}^\vee)} \\
    =& \int_{\left[Q_{0,2}^{0+}(X,\beta)\right]^{\C^*,\vir}} ev_1^{X *} (\alpha_1) \psi_1^{a_1} \cup ev_2^{X *} (\alpha_2 \cup e_{\C^*}(E^\vee_g) ) \psi_2^{a_2} \cup e_{\C^*}(\mathbbm{R}^1\pi_*[u]^*\mathcal{E}^\vee(-x_1)) .
\end{align*}
The first line is a result of \eqref{e:localization}, the projection formula, and \eqref{comD}. The second is immediate. The third line follows from \eqref{e: K theory equality}.
%\eqref{Equation: SES for Euler class relation for L^X/Y(z) proof} and \eqref{e: restricted to x1}.
Taking the non-equivariant limit yields the desired equality.
\end{proof}
A similar argument yields the following.
\begin{proposition}
\label{Proposition: Pushforward of L^X/Y is equivalent to L^Y of the pushforward}
The operators $L^{X/E^\vee,0+}(z)$ and $L^{E^\vee,0+}(z)$ are related by  pushforward along $i$,
\begin{equation}
\label{e: L commutes with i_*}
    i_* \circ L^{X/E^\vee,0+}(z) = L^{E^\vee,0+}(z) \circ i_* .
\end{equation}
\end{proposition}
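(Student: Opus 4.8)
The plan is to mirror the proof of Proposition~\ref{prop: this fits here}, using virtual localization for the $\C^*$-action that scales the fibers of $\mathcal{E}^\vee$. Since both operators have the same $q^0$-term (each is $i_*$), it suffices to establish, for every $\beta\in\Eff$ and every $\alpha\in H^*_{\CR}(X)$, the equality
\begin{align*}
    & i_* \tilde{ev}^X_{2*}\left( \frac{ev_1^{X*}(\alpha)}{-z-\psi_1} \cup e(\mathbbm{R}^1\pi_*[u]^*\mathcal{E}^\vee(-x_1)) \cap [Q^{0+}_{0,2}(X,\beta)]^{\vir} \right) \\
    =& \tilde{ev}^{E^\vee}_{2*}\left( \frac{ev_1^{E^\vee*}(i_*\alpha)}{-z-\psi_1} \cap [Q^{0+}_{0,2}(E^\vee,\beta)]^{\vir} \right) .
\end{align*}
Both sides are honest (non-equivariant) cycle classes, so I would first lift every input to $\C^*$-equivariant cohomology, prove the equivariant version of this equality, and then specialize the equivariant parameter to zero.

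First I would substitute the localization formula \eqref{e:localization} into the equivariant lift of the right-hand side, rewriting $[Q^{0+}_{0,2}(E^\vee,\beta)]^{\C^*,\vir}$ as $\tilde i_*$ applied to $[Q^{0+}_{0,2}(X,\beta)]^{\C^*,\vir}$ divided by $e_{\C^*}(\mathbbm{R}\pi_*[u]^*\mathcal{E}^\vee)$. Applying the projection formula along $\tilde i$, the commutative square \eqref{comD}, and the identity $\tilde i^*\psi_1=\psi_1$ (the source curve and its markings are pulled back from $Q^{0+}_{0,2}(X,\beta)$), the right-hand side becomes a pushforward over $Q^{0+}_{0,2}(X,\beta)$ in which $ev_1^{E^\vee*}(i_*\alpha)$ is replaced by $ev_1^{X*}(i^*i_*\alpha)$. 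On the component of $Q^{0+}_{0,2}(X,\beta)$ lying over a fixed twisted sector $\bar X_g$, the self-intersection formula gives $i^*i_*\alpha=\alpha\cup e_{\C^*}(E^\vee_g)$; combining this with \eqref{e: restricted to x1}, which identifies $ev_1^{X*}E^\vee_g$ with $\mathbbm{R}^0\pi_*[u]^*\mathcal{E}^\vee|_{x_1}$, and with the $K$-theoretic identity \eqref{e: K theory equality}, the factor $e_{\C^*}(ev_1^{X*}E^\vee_g)/e_{\C^*}(\mathbbm{R}\pi_*[u]^*\mathcal{E}^\vee)$ collapses to $e_{\C^*}(\mathbbm{R}^1\pi_*[u]^*\mathcal{E}^\vee(-x_1))$. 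Finally, since $i$ sends $X_g$ to $E^\vee_g$ the involution $\iota$ commutes with $i$, so $\tilde{ev}^{E^\vee}_2\circ\tilde i=i\circ\tilde{ev}^X_2$ and hence $\tilde{ev}^{E^\vee}_{2*}\tilde i_*=i_*\tilde{ev}^X_{2*}$; this exhibits the equivariant right-hand side as precisely the equivariant lift of the left-hand side, and letting the equivariant parameter $\lambda$ tend to zero finishes the argument.

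The step I expect to require the most care is the same one that appears in Proposition~\ref{prop: this fits here}: justifying that the non-equivariant limit exists. The localization formula a priori produces only a rational function of $\lambda$, so one must verify regularity at $\lambda=0$. I would argue this by observing that after the simplifications above every surviving factor---$e_{\C^*}(\mathbbm{R}^1\pi_*[u]^*\mathcal{E}^\vee(-x_1))$, which is the equivariant Euler class of an honest vector bundle, together with the pulled-back $\psi_1$- and evaluation classes and the proper pushforwards $i_*$ and $\tilde{ev}^X_{2*}$---is manifestly polynomial in $\lambda$, so that setting $\lambda=0$ recovers exactly the stated non-equivariant classes. A secondary bookkeeping point is to check that $i_*\alpha$ lies in the compact-type subspace (Lemma~\ref{Lemma: Narrow cohomology isomorphism}), so that $L^{E^\vee,0+}(z)(i_*\alpha)$ is defined, and that the relevant pushforward along $\tilde{ev}^{E^\vee}_2$ again lands in $H^*_{\CR,\ct}(E^\vee)$; both follow from properness of the evaluation maps on $Q^{0+}_{0,2}(E^\vee,\beta)$ (established above) together with the compatibility of proper pushforward with the compact-type subspace.
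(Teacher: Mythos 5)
Your proposal is correct and follows essentially the same route as the paper: virtual localization via \eqref{e:localization}, the projection formula together with the commuting square \eqref{comD} and the self-intersection formula $i^*i_*\alpha=\alpha\cup e_{\C^*}(E^\vee_g)$, the $K$-theoretic identity \eqref{e: K theory equality} to collapse the Euler-class ratio to $e_{\C^*}(\mathbbm{R}^1\pi_*[u]^*\mathcal{E}^\vee(-x_1))$, and then the non-equivariant limit, exactly as in the paper's adaptation of Proposition~\ref{prop: this fits here}. Your added remarks on regularity at $\lambda=0$ and on the compact-type bookkeeping are fine elaborations of steps the paper leaves implicit.
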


\begin{proof}
For $\alpha\in H^*_{\CR}(X)$, we can rewrite the left-hand-side of \eqref{e: L commutes with i_*} applied to $\alpha$ as
\begin{equation}
\label{Equation: Proof that i_* is a homorphism - LHS}
    i_* \circ L^{X/E^\vee,0+}(z) (\alpha) = i_* (\alpha) + \sum_{\beta \in\Eff} q^\beta i_* \tilde{ev}^{E^\vee}_{2*}\left( \frac{ev^{*}_{1} (\alpha)}{-z-\psi} \cap \left[Q^{0+}_{0,2}(X,\beta)\right]^{\vir}_{X/E^\vee,1} \right)
\end{equation}
and the right-hand-side as
\begin{equation}
\label{Equation: Proof that i_* is a homorphism - RHS}
    L^{E^\vee,0+}(z) \circ i_* (\alpha) = i_* (\alpha) + \sum_{\beta \in\Eff} q^\beta \tilde{ev}^{X}_{2*}\left( \frac{ev^{*}_{1} i_* (\alpha)}{-z-\psi} \cap \left[Q^{0+}_{0,2}(E^\vee,\beta)\right]^{\vir} \right).
\end{equation}
  
%We make use of the commutative diagram
%\begin{equation}\label{comD}
%\begin{tikzcd}
%   Q^{0+}_{0,2}(X,\beta) \arrow[r, "\tilde i"] \arrow[d,"ev_i^X"]
%    &  Q^{0+}_{0,2}(E^\vee,\beta) \arrow[d,"ev_i^{E^\vee}"] 
%     \\
%    \bar IX \arrow[r, hook, "i"]
%    & \bar I E^\vee ,
%\end{tikzcd}
%\end{equation}
%where the $i$th evaluation maps on $Q^{0+}_{0,2}(X,\beta)$ and $Q^{0+}_{0,2}(E^\vee,\beta)$ are denoted by $ev^X_i$ and $ev_i^{E^\vee}$ respectively.
%Consider the $\C^*$-action on $\mathcal{E}^\vee$ given by scaling fibers.  This induces actions on $\bar IE^\vee$ and $Q^{0+}_{0,2}(E^\vee,\beta)$, with fixed loci $\bar IX$ and  $Q^{0+}_{0,2}(X,\beta)$ respectively.  All the maps in Diagram~\eqref{comD} are $\C^*$-equivariant.
%By~(1) of~\cite{Graber-Pand}, we have the equality
%\begin{equation}\label{e:localization} \left[Q^{0+}_{0,2}(E^\vee,\beta)\right]^{\C^*, \vir} = \tilde i_* \left( \frac{\left[Q^{0+}_{0,2}(X,\beta)\right]^{\C^*, \vir}}{e_{\C^*}(\mathbbm{R}\pi_*[u]^*\mathcal{E}^\vee )} \right),\end{equation}
%where $[ - ]^{\C^*, \vir}$ denotes the $\C^*$-equivariant virtual fundamental class.

We equate \eqref{Equation: Proof that i_* is a homorphism - LHS} and \eqref{Equation: Proof that i_* is a homorphism - RHS} term by term.  Again we apply via virtual localization.
Fix a degree $\beta$ and choose a class $\alpha$ in $H^*_{\CR, \C^*}(X)$.  
 
Consider the following sequence of equalities in the localized equivariant cohomology ring of $\bar IE^\vee$:
\begin{align*}
   & \tilde{ev}^{E^\vee}_{2*}\left( \frac{ev^{E^{\vee}*}_{1} i_* (\alpha)}{-z-\psi} \cap \left[Q^{0+}_{0,2}(E^\vee,\beta)\right]^{\C^*, \vir} \right) \\
    =&  \tilde{ev}^{E^\vee}_{2*}\left( \frac{ev^{E^{\vee}*}_{1} i_* (\alpha)}{-z-\psi} \cap\tilde i_* \left( \frac{\left[Q^{0+}_{0,2}(X,\beta)\right]^{\C^*, \vir}}{e_{\C^*}(\mathbbm{R}\pi_*[u]^*\mathcal{E}^\vee )} \right)\right) \\
    =& i_*\tilde{ev}^X_{2*}\left( \frac{ev^{X*}_{1}(\alpha \cup e_{\C^*}(E^\vee_g) )}{-z-\psi} \cap  \frac{\left[Q^{0+}_{0,2}(X,\beta)\right]^{\C^*, \vir}}{e_{\C^*}(\mathbbm{R}\pi_*[u]^*\mathcal{E}^\vee )} \right) \\
    =& i_*\tilde{ev}^X_{2*}\left( \frac{ev^{X*}_{1}(\alpha)}{-z-\psi}  \cup \frac{e_{\C^*}(ev_1^{X*}E^\vee_g)}{e_{\C^*}(\mathbbm{R}\pi_*[u]^*\mathcal{E}^\vee )} \cap \left[Q^{0+}_{0,2}(X,\beta)\right]^{\C^*, \vir} \right) \\
    =& i_*\tilde{ev}^X_{2*}\left( \frac{ev^{X*}_{1}(\alpha)}{-z-\psi} \cup e_{\C^*}(\mathbbm{R}^1\pi_*[u]^*\mathcal{E}^\vee(-x_1 )) \cap \left[Q^{0+}_{0,2}(X,\beta)\right]^{\C^*, \vir} \right) .
    \end{align*}
The first equality is \eqref{e:localization}. The second line is obtained by two applications of the projection formula. 
The third line is immediate.  The forth follows from~\eqref{e: K theory equality}.

Taking the non-equivariant limit of the first and last terms in the chain of equalities and recalling Definition~\ref{Definition: Y-twisted virtual class} completes the proof.
\end{proof}

% --------------------------------------------------------------
%                       Quantum Serre duality
% --------------------------------------------------------------

\section{Quasimap quantum Serre duality}
\label{Section: Quantum Serre duality}

In this section we prove a quantum Serre duality statement in three contexts. Specifically, we compare the twisted virtual classes
\begin{equation*}
      \left[Q^{0+}_{0,2}(X,\beta)\right]^{\vir}_{X/Z,2} \qquad \qquad \text{and} \qquad \qquad \left[Q^{0+}_{0,2}(X,\beta)\right]^{\vir}_{X/E^\vee,1}
\end{equation*}
defined in Sections~\ref{Section: Quantum Lefschetz} and~\ref{Section: The total space} to get a cycle-valued statement. From this we compute a direct relationship between the two-pointed genus-zero quasimap invariants of $Z$ and $E^\vee$. We then rephrase this as a comparison between the operators $L^{Z,0+}(z)$ and $L^{E^\vee,0+}(z)$.

%%%%%%%%%% Cycle-valued statement %%%%%%%%%%

\subsection{Cycle-valued statement}
\label{Section: Cycle-valued statement}

Let $\mathcal{E}\to \mathfrak{X}$ be a weakly convex vector bundle. For elements $g_1,g_2\in S$, denote by $Q_{0,g_1,g_2}^{0+}(X,\beta)$ the open and closed subset of $Q_{0,2}^{0+}(X,\beta)$ defined by the conditions $\im(ev_1)\subset X_{g_1}$ and $\im(ev_2)\subset X_{g_2}$.

\begin{lemma}
\label{Lemma: Rank computation}
On $Q_{0,g_1,g_2}^{0+}(X,\beta)$ the vector bundle $\mathbbm{R}^1\pi_*[u]^*\mathcal{E}^\vee(-x_1)$ has rank 
\begin{equation*}
    \beta(\det\mathcal{E}) - \age_{g_1}(\mathcal{E}) + \age_{g_2}(\mathcal{E}^\vee) ,
\end{equation*}
where $\det\mathcal{E}=\wedge_{i=1}^r\mathcal{E}$ is the determinant line bundle.
\end{lemma}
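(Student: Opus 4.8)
The plan is to read the rank off the $K$-theoretic identity \eqref{e: K theory equality} rather than computing it directly. By weak concavity $\mathbbm{R}^1\pi_*[u]^*\mathcal{E}^\vee(-x_1)$ is a genuine vector bundle, so its rank equals the virtual rank of $ev_1^*E_{g_1}^\vee \ominus \mathbbm{R}\pi_*[u]^*\mathcal{E}^\vee$, namely
\[
    \rank(E_{g_1}^\vee) - \chi(C,[u]^*\mathcal{E}^\vee),
\]
where $C$ is the source curve of any quasimap in $Q_{0,g_1,g_2}^{0+}(X,\beta)$. (One could instead compute $-\chi(C,[u]^*\mathcal{E}^\vee(-x_1))$ directly using weak concavity, but working through \eqref{e: K theory equality} avoids having to track how the age of $[u]^*\mathcal{E}^\vee$ at $x_1$ jumps under the twist by $x_1$.) It remains to evaluate the two terms.

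For the first term, the fiber of $E_{g_1}^\vee$ over a point of $X_{g_1}$ is the $g_1$-invariant subspace of the corresponding fiber of $\mathcal{E}^\vee$, so $\rank(E_{g_1}^\vee)=\dim(\mathcal{E}^\vee_x)_0=\dim(\mathcal{E}_x)_0$. Writing the eigenvalues of $g_1$ on $\mathcal{E}_x$ as $e^{2\pi i f_1},\dots,e^{2\pi i f_r}$ with $0\le f_k<1$, the eigenvalues on $\mathcal{E}^\vee_x$ are $e^{-2\pi i f_k}$, with age representatives $1-f_k$ when $f_k\neq 0$ and $0$ otherwise; hence $\age_{g_1}(\mathcal{E})+\age_{g_1}(\mathcal{E}^\vee)=\#\{k:f_k\neq 0\}=r-\dim(\mathcal{E}_x)_0$, and therefore $\rank(E_{g_1}^\vee)=r-\age_{g_1}(\mathcal{E})-\age_{g_1}(\mathcal{E}^\vee)$.

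For the second term I would apply orbifold Riemann--Roch: for a locally free sheaf $V$ on a genus-zero orbifold curve $C$,
\[
    \chi(C,V)=\deg V+\rank(V)\,\chi(\mathcal{O}_C)-\sum_{p}\age_p(V),
\]
the sum running over the stacky points of $C$. For the source curves of Corollary~\ref{Lemma: Orbifold Source Curves} — chains of copies of $\CP(a,b)/\mu_l$ — the stacky points are the two markings together with the nodes; since the nodes are balanced, the age contributions of the two branches at each node are inverse to one another and cancel, so only $x_1$ and $x_2$ survive. As $\deg([u]^*\mathcal{E}^\vee)=-\beta(\det\mathcal{E})$ by definition of $\beta$, $\chi(\mathcal{O}_C)=1$, and $ev_i$ maps into $X_{g_i}$ so the monodromy of $[u]^*\mathcal{E}^\vee$ at $x_i$ is $g_i$, this gives $\chi(C,[u]^*\mathcal{E}^\vee)=-\beta(\det\mathcal{E})+r-\age_{g_1}(\mathcal{E}^\vee)-\age_{g_2}(\mathcal{E}^\vee)$. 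Subtracting, the $r$ and the $\age_{g_1}(\mathcal{E}^\vee)$ terms cancel and the rank equals $\beta(\det\mathcal{E})-\age_{g_1}(\mathcal{E})+\age_{g_2}(\mathcal{E}^\vee)$, as claimed.

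The one genuinely delicate point is the Riemann--Roch bookkeeping for nodal source curves, i.e. the claim that stacky nodes contribute nothing to the age sum. I would make this precise by running the computation through the normalization sequence $0\to V\to\bigoplus_j V|_{C_j}\to\bigoplus_i V|_{n_i}\to 0$, applying the smooth case of orbifold Riemann--Roch on each component $\CP(a,b)/\mu_l$, and checking that the skyscraper terms $\dim H^0(n_i,V|_{n_i})$ exactly absorb the sum of the (inverse) ages at the two preimages of each node. A minor additional point, automatic since $\mathbbm{R}\pi_*[u]^*\mathcal{E}^\vee$ has locally constant virtual rank, is that $\chi(C,[u]^*\mathcal{E}^\vee)$ is independent of the choice of quasimap in $Q_{0,g_1,g_2}^{0+}(X,\beta)$.
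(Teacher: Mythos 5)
Your argument is correct, but it takes a genuinely different route from the paper's. The paper decomposes $[u]^*\mathcal{E}$ into line bundles $\oplus_{i=1}^r\mathcal{L}_i$, converts $h^1(C,\mathcal{L}_i^\vee(-x_1))$ into $h^0(C,\mathcal{L}_i(-x_2))$ via Serre duality together with the triviality of $\omega_C(x_1+x_2)$ (Lemma~\ref{Lemma: The log canonical bundle over a given source curve is trivial}), uses weak convexity to replace $h^0$ by $\chi$, applies orbifold Riemann--Roch, and then works through a three-case age identity at $x_2$ before summing over $i$. You instead read the rank off the $K$-theoretic identity \eqref{e: K theory equality} --- which is legitimate and not circular, since it is established in \S\ref{Section: The total space} before the lemma and independently of it --- reducing everything to $\rank(E_{g_1}^\vee)-\chi(C,[u]^*\mathcal{E}^\vee)$; the first term is the age-sum identity (\eqref{Equation: Sum of ages}, \parencite[Lemma~4.6]{Adem_Leida_Ruan_Orbifolds_and_stringy_topology_2007}) and the second is a single orbifold Riemann--Roch computation on the whole source curve. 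What your route buys is the elimination of the Serre-duality step and of the case analysis at $x_2$; what it costs is precisely the point you flag, namely justifying Riemann--Roch on a nodal chain with only the two markings contributing age terms. Note that the paper's proof silently relies on the same fact (its third displayed equality is orbifold Riemann--Roch on the possibly nodal $C$ with only $\age_{x_1}$ and $\age_{x_2}$ appearing), so your explicit normalization-sequence verification is, if anything, the more careful treatment; since the twisted curves here have balanced nodes, the cancellation does hold.

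One small imprecision in that verification: at a balanced node the two branch ages sum to $\rank(V)-\dim H^0(n_i,V|_{n_i})$, so the skyscraper terms alone do not absorb them; you also need the extra $\rank(V)\cdot\chi(\mathcal{O}_{C_j})$ contributed by each additional component of the normalization. The totals do cancel, so the formula $\chi(C,V)=\deg V+\rank(V)-\age_{x_1}(V)-\age_{x_2}(V)$ that you actually use is correct, and with $\deg([u]^*\mathcal{E}^\vee)=-\beta(\det\mathcal{E})$ and $\rank(E_{g_1}^\vee)=r-\age_{g_1}(\mathcal{E})-\age_{g_1}(\mathcal{E}^\vee)$ the remaining arithmetic gives exactly the stated rank.
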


\begin{proof}
Fix a quasimap $(C,x_1,x_2,[u]) \in Q^{0+}_{0,g_1,g_2}(X,\beta)$ and let $r_j$ be the order of the isotropy group at $x_j$ for $j=1,2$. Recall from \S \ref{Section: Weak convexity} that there exists line bundles $\{\mathcal{L}_i\to C\}_{i=1}^r$ such that $[u]^*\mathcal{E}=\oplus_{i=1}^r \mathcal{L}_i$. Write $\mathbbm{R}^1\pi_*[u]^*\mathcal{E}^{\vee}(-x_1)$ as the direct sum $\oplus_{i=1}^r \mathbbm{R}^1\pi_*\mathcal{L}_{i}^{\vee}(-x_1)$. Using orbifold Riemann-Roch we compute:
\begin{align}
    \rank\left(\mathbbm{R}^1\pi_*\mathcal{L}_i^{\vee}(-x_1)\right) &= h^1\left( C,\mathcal{L}_i^\vee (-x_1) \right) \nonumber \\
    &= h^0\left( C,\mathcal{L}_i (-x_2) \right) \nonumber \\
    &= \deg(\mathcal{L}_i(-x_2)) + 1 - \age_{x_1}(\mathcal{L}_i) - \age_{x_2}(\mathcal{L}_i(-x_2)) \nonumber \\
    &= \deg(\mathcal{L}_i) - \frac{1}{r_2} + 1 - \age_{x_1}(\mathcal{L}_i) - \age_{x_2}(\mathcal{L}_i(-x_2))  , \label{e: rank calc eqn 1}
\end{align}
where $\age_{x_j}(-)$ denotes the age with respect to cyclic generator which acts on a local chart by multiplication by $e^{2\pi i\frac{1}{r_j}}$. The second equality is Serre duality and Lemma~\ref{Lemma: The log canonical bundle over a given source curve is trivial}.  The third equality follows from assuming $\mathcal{E}$ is weakly convex.

We claim that
\begin{equation}
\label{e: rank calc eqn 2}
    - \frac{1}{r_2} + 1 - \age_{x_2}(\mathcal{L}_i(-x_2))
    =
    \age_{x_2}(\mathcal{L}_i^\vee) .
\end{equation}
To see this, consider three cases. 

First, assume $G_{x_2}$ acts nontrivially on the fiber $\mathcal{L}_i(-x_2)|_{x_2}$. Then we observe that
\begin{align*}
    - \frac{1}{r_2} + 1 - \age_{x_2}(\mathcal{L}_i(-x_2))
    &= - \frac{1}{r_2} + \age_{x_2}(\mathcal{L}_i^\vee(x_2)) \\
    &= \age_{x_2}(\mathcal{L}_i^\vee) ,
\end{align*}
where the second equality follows because the age of $\mathcal{O}_{C}(x_2)$ at $x_2$ is $1/r_2$.

For the second case, assume $G_{x_2}$ is nontrivial but acts trivially on the fiber $\mathcal{L}_{i}(-x_2)|_{x_2}$. Then the age of $\mathcal{L}_{i}$ and $\mathcal{L}_{i}^\vee$ at $x_2$ is $1/r_2$ and $(r_2-1)/r_2$ respectively. We have the following:
\begin{align*}
    - \frac{1}{r_2} + 1 - \age_{x_2}(\mathcal{L}_{_i}(-x_2))
    &= \frac{r_2 - 1}{r_2} \\
    &= \age_{x_2}(\mathcal{L}_{i}^\vee) .
\end{align*}

Finally, consider the case $g_2$ equals the identity. Then $G_{x_2}$ is trivial, hence $r_2$ equals $1$. We obtain
\begin{align*}
    - \frac{1}{r_2} + 1 - \age_{x_2}(\mathcal{L}_{i}(-x_2)) = 0 = \age_{x_2}(\mathcal{L}_{i}^\vee) .
\end{align*}
This proves the claim. 

Note that
\begin{equation*}
    \sum_{i=1}^r \deg(\mathcal{L}_i) = \deg( [u]^*(\det \mathcal{E}) ) = \beta( \det\mathcal{E} )
\end{equation*}
and
\begin{equation*}
    \sum_{i=1}^r \age_{x_j}(\mathcal{L}_i) = \age_{x_j}([u]^*\mathcal{E}) = \age_{g_j}(\mathcal{E}) .
\end{equation*}
Summing \eqref{e: rank calc eqn 1} and \eqref{e: rank calc eqn 2} over $i$ then completes the proof.
\end{proof}

We conclude this section with the following equality of twisted virtual classes. This may be interpreted as a cycle-valued formulation of quantum Serre duality.

\begin{theorem}\emph{[Cycle-valued quantum Serre duality.]}
\label{Theorem: Cycle-valued QSD}
Let $\mathcal{E}$ be weakly convex. On the connected component $Q_{0,g_1,g_2}^{0+}(X,\beta)$, the virtual classes $\left[Q^{0+}_{0,g_1,g_2}(X,\beta)\right]^{\vir}_{X/Z,2}$ and $\left[Q^{0+}_{0,g_1,g_2}(X,\beta)\right]^{\vir}_{X/E^\vee,1}$ are equal up to sign,
\begin{equation*}
    \left[Q^{0+}_{0,g_1,g_2}(X,\beta)\right]^{\vir}_{X/Z,2} = (-1)^{\beta(\det \mathcal{E})- \age_{g_1}(\mathcal{E}) + \age_{g_2}(\mathcal{E}^\vee)} \left[Q^{0+}_{0,g_1,g_2}(X,\beta)\right]^{\vir}_{X/E^\vee,1} .
\end{equation*}
\end{theorem}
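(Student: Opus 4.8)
The plan is to compute the two Euler classes appearing in Definitions~\ref{Definition: Z-twisted virtual class} and~\ref{Definition: Y-twisted virtual class} and show they differ precisely by the asserted sign. The essential input is relative Serre duality along the universal curve $\pi\colon\mathcal{C}\to Q^{0+}_{0,2}(X,\beta)$, combined with the triviality of the relative log canonical bundle $\omega_\pi(x_1+x_2)$ established in Proposition~\ref{Proposition: The log canonical bundle over the universal curve is trivial}. Everything reduces to identifying $\mathbbm{R}^1\pi_*[u]^*\mathcal{E}^\vee(-x_1)$ and $\mathbbm{R}^0\pi_*[u]^*\mathcal{E}(-x_2)$ as mutually dual vector bundles and then counting ranks.

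First I would set $\mathcal{F} := [u]^*\mathcal{E}^\vee(-x_1)$ on $\mathcal{C}$. Weak concavity of $\mathcal{E}^\vee$ (which holds by Theorem~\ref{Theorem: Weak concavity}, since $\mathcal{E}$ is weakly convex) gives $H^0(C,[u]^*\mathcal{E}^\vee(-x_1))=0$ fibrewise, so $\mathbbm{R}^0\pi_*\mathcal{F}=0$ and $\mathbbm{R}\pi_*\mathcal{F}$ is the locally free sheaf $\mathbbm{R}^1\pi_*\mathcal{F}$ placed in cohomological degree one. Applying relative Serre duality for the family of (at worst nodal, possibly stacky) curves $\pi$ — whose relative dualizing complex is $\omega_\pi[1]$ — and using that, by weak convexity, $\mathbbm{R}\pi_*(\mathcal{F}^\vee\otimes\omega_\pi)$ is also concentrated in a single degree, one obtains a canonical isomorphism $\mathbbm{R}^1\pi_*\mathcal{F}\cong\bigl(\mathbbm{R}^0\pi_*(\mathcal{F}^\vee\otimes\omega_\pi)\bigr)^\vee$ of vector bundles. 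Now $\mathcal{F}^\vee=[u]^*\mathcal{E}(x_1)$ and, by Proposition~\ref{Proposition: The log canonical bundle over the universal curve is trivial}, $\omega_\pi\cong\mathcal{O}_{\mathcal{C}}(-x_1-x_2)$, so $\mathcal{F}^\vee\otimes\omega_\pi\cong[u]^*\mathcal{E}(-x_2)$. Hence
\begin{equation*}
    \mathbbm{R}^1\pi_*[u]^*\mathcal{E}^\vee(-x_1)\ \cong\ \bigl(\mathbbm{R}^0\pi_*[u]^*\mathcal{E}(-x_2)\bigr)^\vee
\end{equation*}
as vector bundles on $Q^{0+}_{0,2}(X,\beta)$ (weak convexity, via Lemma~\ref{Lemma: H^1(E) vanishes} and Theorem~\ref{Theorem: Weak convexity}, guarantees the right-hand side is locally free).

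To conclude, recall that for a vector bundle $V$ of rank $d$ one has $e(V^\vee)=(-1)^d e(V)$. On the component $Q^{0+}_{0,g_1,g_2}(X,\beta)$ the common rank $d$ of the two bundles above is, by Lemma~\ref{Lemma: Rank computation}, equal to $\beta(\det\mathcal{E})-\age_{g_1}(\mathcal{E})+\age_{g_2}(\mathcal{E}^\vee)$. Therefore $e\bigl(\mathbbm{R}^1\pi_*[u]^*\mathcal{E}^\vee(-x_1)\bigr)=(-1)^{d}\,e\bigl(\mathbbm{R}^0\pi_*[u]^*\mathcal{E}(-x_2)\bigr)$; capping both sides with $\bigl[Q^{0+}_{0,g_1,g_2}(X,\beta)\bigr]^{\vir}$ and comparing with Definitions~\ref{Definition: Z-twisted virtual class} and~\ref{Definition: Y-twisted virtual class} yields the claimed equality (using $(-1)^d=(-1)^{-d}$).

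The main obstacle I anticipate is the careful justification of relative Serre duality in this setting: one must confirm that the relative dualizing complex of a family of twisted curves is $\omega_\pi[1]$, that the pushforwards in question commute with base change so that the derived statement descends to the stated isomorphism of vector bundles, and that the age/degree bookkeeping of Lemma~\ref{Lemma: Rank computation} is consistent with the orientation conventions implicit in the duality isomorphism. Once these points are in place, the remainder is a short formal manipulation.
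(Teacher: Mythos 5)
Your proposal is correct and follows essentially the same route as the paper: it identifies $\mathbbm{R}^0\pi_*[u]^*\mathcal{E}(-x_2)$ and $\mathbbm{R}^1\pi_*[u]^*\mathcal{E}^\vee(-x_1)$ as dual vector bundles via the triviality of $\omega_\pi(x_1+x_2)$ (Proposition~\ref{Proposition: The log canonical bundle over the universal curve is trivial}) together with relative Serre duality, and then converts this into the sign on Euler classes using the rank formula of Lemma~\ref{Lemma: Rank computation}. Your additional remarks on the fibrewise vanishing needed to concentrate the pushforwards in a single degree are consistent with, and slightly more explicit than, the paper's argument.
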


\begin{proof}
We defined the twisted virtual classes in~\ref{Definition: Z-twisted virtual class} and~\ref{Definition: Y-twisted virtual class} as
\begin{align*}
    \left[Q^{0+}_{0,g_1,g_2}(X,\beta)\right]^{\vir}_{X/Z,2} &:= e(\mathbbm{R}^0\pi_*[u]^*\mathcal{E}(-x_2)) \cap \left[Q^{0+}_{0,g_1,g_2}(X,\beta)\right]^{\vir} \\
    \left[Q^{0+}_{0,g_1,g_2}(X,\beta)\right]^{\vir}_{X/E^\vee,1} &:= e(\mathbbm{R}^1\pi_*[u]^*\mathcal{E^\vee}(-x_1)) \cap \left[Q^{0+}_{0,g_1,g_2}(X,\beta)\right]^{\vir} .
\end{align*}

Via Proposition~\ref{Proposition: The log canonical bundle over the universal curve is trivial}, 
\begin{equation}
    \label{Equation: Proof of cycle-valued QSD - log canonical bundle is trivial}
    \mathcal{O}_\mathcal{C} = \omega_\pi(x_1+x_2).
\end{equation}
Tensoring \eqref{Equation: Proof of cycle-valued QSD - log canonical bundle is trivial} by $[u]^*\mathcal{E}(-x_2)$, pushing forward via $\pi_*$, and applying Serre duality yields:
\begin{equation*}
    \mathbbm{R}^0\pi_*[u]^*\mathcal{E} (-x_2) = \left( \mathbbm{R}^1\pi_* [u]^*\mathcal{E}^{\vee} (-x_1) \right)^\vee.
\end{equation*}
This implies the Euler class identity
\begin{align*}
    e\left( \mathbbm{R}^0\pi_*[u]^*\mathcal{E} (-x_2) \right) &= e\left( \left( \mathbbm{R}^1\pi_* [u]^*\mathcal{E}^{\vee} (-x_1) \right)^\vee \right) \\
    &= (-1)^{\beta(\det \mathcal{E})- \age_{g_1}(\mathcal{E}) + \age_{g_2}(\mathcal{E}^\vee)} e\left( \mathbbm{R}^1\pi_* [u]^*\mathcal{E}^{\vee} (-x_1) \right) ,
\end{align*}
where the second equality follows from Lemma~\ref{Lemma: Rank computation}.
\end{proof}

% --------------------------------------------------------------
%    Quantum Serre duality for quasimap invariants
% --------------------------------------------------------------

\subsection{Quantum Serre duality for quasimap invariants}
\label{Section: Quantum Serre duality for quasimap invariants}
In this section we use the comparison of twisted virtual cycles of Theorem~\ref{Theorem: Cycle-valued QSD} to compare quasimap invariants for $Z$ and $E^\vee$ when $\mathcal{E}$ is weakly convex. We obtain a simple relationship between the generating functions $L^{Z,0+}(z)$ and $L^{E^\vee,0+}(z)$.

For the remainder of the paper we assume: 
\begin{itemize}
    \item Assumption~\ref{Assumption: The Poincare pairing on the ambient cohomology of Z is non-degenerate};
    \item the GIT stack quotient $X$ is proper;
    \item the vector bundle $\mathcal{E}$ is weakly convex.
\end{itemize}
In particular, the third case holds whenever $\mathcal{E}$ is weakly semi-positive.

\begin{definition}\parencite[Definition 6.9]{Shoemaker_2018}
Given a class $\gamma\in H^*_{\CR, \ct}(E^\vee)$, denote by $\overline{\gamma}$ a lift of $\gamma$ to the compactly supported cohomology. Define the linear map $\Delta:H_{\CR,\ct}^*(E^\vee)\to H_{\CR,\amb}^*(Z)$ by
\begin{equation*}
    \Delta(\gamma) := j^* \circ p_*^{\cs}(\overline{\gamma}) ,
\end{equation*}
where $p_*^{\cs}: H^*_{\CR,\cs}(E^\vee)\to H^*_{\CR}(X)$ is the pushforward of compactly supported cohomology.
\end{definition}

\begin{lemma} \emph{ \parencite[Lemma 6.10]{Shoemaker_2018} }
Assuming~\ref{Assumption: The Poincare pairing on the ambient cohomology of Z is non-degenerate}, the transformation 
\begin{equation*}
    \Delta:H_{\CR,\ct}^*(E^\vee)\to H_{\CR,\amb}^*(Z)
\end{equation*}
is well defined.
\end{lemma}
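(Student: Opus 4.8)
The claim that $\Delta$ is well defined has two parts: that the value $\Delta(\gamma) = j^* \circ p_*^{\cs}(\overline\gamma)$ is independent of the choice of lift $\overline\gamma \in H^*_{\CR,\cs}(E^\vee)$, and that it lands in $H^*_{\CR,\amb}(Z) = \im(j^*)$. The second is immediate from the shape of the defining formula, so the entire content is the first. Since any two lifts of $\gamma$ differ by an element of $\ker(\phi)$, it suffices to prove that $j^* \circ p_*^{\cs}$ annihilates $\ker(\phi) \subset H^*_{\CR,\cs}(E^\vee)$. The plan is therefore: fix $\kappa \in \ker(\phi)$ and show $j^*(p_*^{\cs}(\kappa)) = 0$ in $H^*_{\CR}(Z)$.

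The idea is to detect this vanishing via the Chen--Ruan Poincaré pairing on $Z$. The class $j^*(p_*^{\cs}(\kappa))$ lies in $H^*_{\CR,\amb}(Z)$, on which the pairing is non-degenerate by Assumption~\ref{Assumption: The Poincare pairing on the ambient cohomology of Z is non-degenerate}; and as $\alpha$ ranges over $H^*_{\CR}(X)$ the classes $j^*(\alpha)$ exhaust $H^*_{\CR,\amb}(Z)$. Hence it is enough to show $\langle j^*(p_*^{\cs}(\kappa)), j^*(\alpha) \rangle^Z = 0$ for every $\alpha$. To compute this I would work sector by sector on the inertia stacks: using that each $Z_g \subset X_g$ is the zero locus of the induced section $s_g$ of $E_g$, so that $j_{g*}(1) = e(E_g)$, together with the projection formula for $j$ and the compatibility $\iota \circ j = j \circ \iota$, the pairing rewrites as $\sum_{g} \int_{X_g} e(E_g) \cup p_{g*}^{\cs}(\kappa|_{E^\vee_g}) \cup (\iota^*\alpha)|_{X_g}$. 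Applying the projection formula for the bundle projection $p_g \colon E^\vee_g \to X_g$ and the identity $i_{g*}(1) = e(p_g^* E^\vee_g)$ from the proof of Lemma~\ref{Lemma: Narrow cohomology isomorphism} (so that $e(p_g^* E^\vee_g) \cup p_g^*(-) = i_{g*}(i_g^*p_g^*(-)) = i_{g*}(-)$), each summand becomes $\pm\, p_{g*}^{\cs}\bigl(\kappa|_{E^\vee_g} \cup i_{g*}((\iota^*\alpha)|_{X_g})\bigr)$, up to a sign depending on $\rank E_g$ that will not matter.

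The point is now that $i_{g*}((\iota^*\alpha)|_{X_g})$ lies in $\im(i_{g*})$, which is the compact-type subspace $H^*_{\ct}(E^\vee_g) = \im(\phi_g)$ by Lemma~\ref{Lemma: Narrow cohomology isomorphism}, while $\kappa|_{E^\vee_g} \in \ker(\phi_g)$; so $\kappa|_{E^\vee_g} \cup i_{g*}((\iota^*\alpha)|_{X_g}) = 0$ by \cite[Lemma~2.6]{Shoemaker_2018}. (On the sectors where $g$ acts nontrivially on the fibre there is nothing to check, since there $E^\vee_g = X_g$, the map $\phi_g$ is the identity, and $\ker(\phi_g) = 0$.) Hence every summand vanishes, $\langle j^*(p_*^{\cs}(\kappa)), j^*(\alpha) \rangle^Z = 0$ for all $\alpha$, and non-degeneracy of the pairing on $H^*_{\CR,\amb}(Z)$ forces $j^*(p_*^{\cs}(\kappa)) = 0$. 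This non-degeneracy is the only place Assumption~\ref{Assumption: The Poincare pairing on the ambient cohomology of Z is non-degenerate} enters, and it is the crux: without it one could conclude only that $j^*(p_*^{\cs}(\kappa))$ pairs trivially with every ambient class, not that it vanishes. Everything else is formal manipulation with Gysin maps on inertia stacks together with the input from \cite[Lemma~2.6]{Shoemaker_2018}; indeed this lemma is precisely \cite[Lemma~6.10]{Shoemaker_2018}, so one could alternatively just cite that reference, but the sketch above reconstructs the argument in the present notation.
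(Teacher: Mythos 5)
Your proposal is correct and reaches exactly the paper's conclusion, namely that $p^{\cs}_{g*}(\ker(\phi_g))\subseteq\ker(j_g^*)$, but the mechanism differs from the paper's. The paper works directly with the class upstairs: since $X$ is compact, $i^{\cs}_{g*}\colon H^*(X_g)\to H^*_{\cs}(E^\vee_g)$ is an isomorphism, so one writes $\kappa=i^{\cs}_{g*}(\alpha)$, notes $p^{\cs}_{g*}(\kappa)=\alpha$, and translates $\phi_g(\kappa)=0$ into $i_{g*}(\alpha)=p_g^*(e(E^\vee_g)\cup\alpha)=0$, hence $e(E^\vee_g)\cup\alpha=0$; then $j_{g*}j_g^*(\alpha)=e(E_g)\cup\alpha=0$, and Assumption~\ref{Assumption: The Poincare pairing on the ambient cohomology of Z is non-degenerate} is invoked in the dual form ``$j_g^*(\alpha)=0$ iff $j_{g*}j_g^*(\alpha)=0$''. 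You never solve for $\kappa$: instead you detect the vanishing of $j^*p^{\cs}_*(\kappa)$ by pairing against all ambient classes $j^*(\alpha)$ and, after the same Gysin/projection-formula manipulations (using $j_{g*}(1)=e(E_g)$ and $i_{g*}(\alpha)=p_g^*(e(E^\vee_g)\cup\alpha)$), reduce to the cup product $\kappa\cup i_{g*}(\cdot)$, which vanishes by \cite[Lemma~2.6]{Shoemaker_2018} together with $\im(i_{g*})=H^*_{\ct}(E^\vee_g)$ from Lemma~\ref{Lemma: Narrow cohomology isomorphism}. So both arguments use the non-degeneracy assumption at the same point and in the same way (trivial pairing with every ambient class forces vanishing); the real difference is that the paper's key input is the Thom isomorphism on compactly supported cohomology, which makes the computation shorter and yields the explicit intermediate identities $p^{\cs}_{g*}(\kappa)=\alpha$ and $e(E^\vee_g)\cup\alpha=0$, whereas yours outsources the vanishing to the annihilation lemma $\ker(\phi)\cup H^*_{\ct}=0$ and is correspondingly a touch longer but equally valid. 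Two harmless quibbles: your ``summand'' $\pm\, p^{\cs}_{g*}\bigl(\kappa|_{E^\vee_g}\cup i_{g*}((\iota^*\alpha)|_{X_g})\bigr)$ is a class, not a number, and should still be integrated over $X_g$ (immaterial, since the cup product already vanishes); and like the paper you rely on $j_{g*}(1)=e(E_g)$, i.e.\ on $s_g$ being a regular section of $E_g$, which is implicit in the paper's setup and used at the same level of rigor there.
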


\begin{proof}
We work over a given twisted sector $X_g$ for $g\in S$. Let $\gamma$ be an element of $H_{\ct}^*(E^\vee_g)$. A lift $\overline{\gamma} \subset H^*(E^\vee_g)$ is defined up to an element of the kernel of $\phi_g:H^*_{\cs}(E^\vee_g)\to H^*(E^\vee_g)$. To show that $\Delta$ is well defined, we must show $p^{\cs}_{g*}(\ker(\phi_g)) \subseteq \ker(j^*_g)$.

Let $\kappa$ be an element of $\ker(\phi_g)$. The pushforward $i_{g*}^{\cs}:H^*(X_g)\to H_{\cs}^*(E^\vee_g)$ is an isomorphism. By the compactness of $X$, there exists $\alpha\in H^*(X_g)$ such that $i_{g*}^{\cs}(\alpha)=\kappa$. Note that $p_{g*}^{\cs}(\kappa)=p_{g*}^{\cs} \circ i_{g*}^{\cs} (\alpha)=\alpha$. We see that
\begin{equation*}
    0 = \phi_g(\kappa) = \phi_g\circ i^{\cs}_{g*}(\alpha) = i_{g*}(\alpha) = p^*(e(E^\vee_g) \cup \alpha) ,
\end{equation*}
where the last equality is \eqref{Equation: Rewriting an element of narrow cohomology}. Assumption~\ref{Assumption: The Poincare pairing on the ambient cohomology of Z is non-degenerate} implies $j^*_g(\alpha)=0$ if and only if $j_{g*} \circ j^*_g(\alpha)=0$. The projection formula implies $j_{g*} \circ j^*_g(\alpha)=e(E_g)\cup \alpha$. Up to a sign this is equal to
\begin{equation*}
    e(E^\vee_g) \cup \alpha = i^*_g( p^*_g(e(E^\vee_g) \cup \alpha) ) = 0 .
\end{equation*}
Therefore $p_{g*}^{\cs}(\kappa)=\alpha$ lies in $\ker(j^*_g)$.
\end{proof}

Recall Lemma~\ref{Lemma: Narrow cohomology isomorphism}, which states that any element $\gamma$ in $H^*_{\ct}(E^\vee_g)$ may be written as $p^*_g( e(E^\vee_g) \cup \alpha )$ for some $\alpha$ in $H^*(X_g)$.

\begin{lemma}\emph{ \parencite[Lemma 6.11]{Shoemaker_2018} }
\label{Lemma: Delta_+ is an isomorphism}
Given $\gamma\in H^*_{\ct}(E^\vee_g)$, choose $\alpha\in H^*(X_g)$ such that $\gamma = p_g^*( e(E^\vee_g) \cup \alpha )$, then
\begin{equation*}
    \Delta(\gamma) = j^*_g(\alpha) .
\end{equation*}
Furthermore, the transformation $\Delta:H^*_{\CR,\ct}(E^\vee) \to H^*_{\CR,\amb}(Z)$ is an isomorphism.
\end{lemma}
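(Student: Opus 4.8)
The plan is to establish the two assertions in turn: the identity $\Delta(\gamma)=j_g^*(\alpha)$ first, and the isomorphism statement as a consequence. Throughout I would work over a fixed twisted sector $X_g$, since $H^*_{\CR,\ct}(E^\vee)=\bigoplus_{g\in S}H^*_{\ct}(E^\vee_g)$, $H^*_{\CR,\amb}(Z)=\bigoplus_{g\in S}H^*_{\amb}(Z_g)$, and $\Delta$ is defined sector by sector.

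For the first assertion, the key input is the factorization $i_{g*}=\phi_g\circ i_{g*}^{\cs}$ with $i_{g*}^{\cs}$ an isomorphism, as recorded in the proof of Lemma~\ref{Lemma: Narrow cohomology isomorphism}. Given $\gamma\in H^*_{\ct}(E^\vee_g)$, choose $\alpha\in H^*(X_g)$ with $\gamma=p_g^*(e(E^\vee_g)\cup\alpha)=i_{g*}(\alpha)$; such $\alpha$ exists by \eqref{Equation: Rewriting an element of narrow cohomology}. I would then observe that $\overline\gamma:=i_{g*}^{\cs}(\alpha)$ is a lift of $\gamma$, since $\phi_g(\overline\gamma)=i_{g*}(\alpha)=\gamma$. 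Because $p_g\circ i_g=\id_{X_g}$, functoriality of the compactly supported pushforward gives $p_{g*}^{\cs}\circ i_{g*}^{\cs}=\id$, whence
\begin{equation*}
    \Delta(\gamma)=j_g^*\circ p_{g*}^{\cs}(\overline\gamma)=j_g^*\circ p_{g*}^{\cs}\circ i_{g*}^{\cs}(\alpha)=j_g^*(\alpha).
\end{equation*}
That this does not depend on the choice of $\alpha$ is automatic, as $\Delta$ was already shown to be well defined.

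For the isomorphism assertion I would treat surjectivity and injectivity separately on each sector. Surjectivity is immediate from the first assertion: an arbitrary $\delta\in H^*_{\amb}(Z_g)$ has the form $\delta=j_g^*(\alpha)$ for some $\alpha\in H^*(X_g)$, and then $\gamma:=p_g^*(e(E^\vee_g)\cup\alpha)$ satisfies $\Delta(\gamma)=\delta$. For injectivity, suppose $\Delta(\gamma)=0$ and pick $\alpha$ with $\gamma=p_g^*(e(E^\vee_g)\cup\alpha)$, so $j_g^*(\alpha)=0$ by the first assertion. Here Assumption~\ref{Assumption: The Poincare pairing on the ambient cohomology of Z is non-degenerate} enters: it gives $j_g^*(\alpha)=0$ if and only if $j_{g*}j_g^*(\alpha)=0$. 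By the projection formula and $j_{g*}(1)=e(E_g)$ (as $Z_g$ is the regular zero locus of a section of $E_g$), this reads $e(E_g)\cup\alpha=0$; since $E_g$ and $E^\vee_g$ have the same rank, their Euler classes differ only by a sign, so $e(E^\vee_g)\cup\alpha=0$ and therefore $\gamma=p_g^*(e(E^\vee_g)\cup\alpha)=0$.

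The only real care needed --- rather than a genuine obstacle --- concerns the twisted sectors in which $g$ acts nontrivially on the fiber $\C^r$. There $E_g$ and $E^\vee_g$ are bundles of rank $\dim_\C(\C^r)^g$ over $X_g$, degenerating to $E_g=E^\vee_g=X_g$ when that dimension is zero, and the maps $i_g$, $p_g$, $j_g$ become identities on the relevant pieces; one should check that each step above degenerates consistently. The other point worth spelling out is that $p_g$ is a vector bundle projection, so $p_g^*$ and $i_g^*$ are mutually inverse isomorphisms --- this is what legitimizes the choice of $\alpha$ and makes the surjectivity argument work.
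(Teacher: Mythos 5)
Your proposal is correct and follows essentially the same route as the paper: the identity $\Delta(\gamma)=j_g^*(\alpha)$ via the factorization $i_{g*}=\phi_g\circ i_{g*}^{\cs}$, the lift $\overline\gamma=i_{g*}^{\cs}(\alpha)$, and $p_{g*}^{\cs}\circ i_{g*}^{\cs}=\id$; and the isomorphism via Lemma~\ref{Lemma: Narrow cohomology isomorphism}, the projection formula $j_{g*}j_g^*(\alpha)=e(E_g)\cup\alpha$, the sign relation between $e(E_g)$ and $e(E_g^\vee)$, and Assumption~\ref{Assumption: The Poincare pairing on the ambient cohomology of Z is non-degenerate} (the paper packages this as a chain of subspace identifications rather than separate surjectivity/injectivity checks, but the ingredients are identical). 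The only cosmetic remark is that in your injectivity step you only need the trivial direction $j_g^*(\alpha)=0\Rightarrow j_{g*}j_g^*(\alpha)=0$, so citing the Assumption there is harmless but not essential; its real role is in the well-definedness of $\Delta$, which you correctly rely on.
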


\begin{proof}
If $g$ fixes only the origin of $W\times \C^r$, then $E^\vee_g$ equals $X_g$ and the result is immediate. When that is not the case, the proof is similar to that in Lemma~6.11 of \cite{Shoemaker_2018}.

By Equation~\eqref{Equation: Rewriting an element of narrow cohomology}, we have
\begin{equation*}
    \gamma = p^*_g\left( e(E_g^\vee) \cup \alpha \right) = i_{g*} ( \alpha ) . 
\end{equation*}
Noting that $i_{g*}$ factors as $\phi \circ i_{g*}^{\cs}$ gives the following:
\begin{align*}
    \Delta( \gamma ) &= \Delta \circ \phi_g \circ i_{g*}^{\cs} (\alpha) \\
    &= j^*_g \circ p_{g*}^{\cs} \circ i_{g*}^{\cs} (\alpha) \\
    &= j^*_g (\alpha).
\end{align*}

To prove the second claim, we observe
\begin{align*}
    H^*_{\ct}(E^\vee_g) &= p^*_g\left( \im( e(E^\vee_g) \cup - ) \right) \\
    &= \im( e(E^\vee_g) \cup - ) \\
    &= \im( e(E_g) \cup - ) \\
    &\cong j_{g*}\left( \im( j^*_g ) \right) \\
    &\cong \im( j^*_g ) \\
    &= H^*_{\amb}(Z_g) .
\end{align*}
The first equality follows from Lemma~\ref{Lemma: Narrow cohomology isomorphism}, the isomorphism in line four is from the projection formula, and the fifth isomorphism is from Assumption~\ref{Assumption: The Poincare pairing on the ambient cohomology of Z is non-degenerate}. 
\end{proof}

It will be useful to consider a modification of $\Delta$.

\begin{definition}
Define the linear transformation $\tilde\Delta$ by
\begin{equation*}
    \tilde\Delta|_{H^*_{\ct}(E^\vee_g)} := e^{\pi i \age_g(\mathcal{E})} \Delta |_{H^*_{\ct}(E^\vee_g)} . 
\end{equation*}
\end{definition}

\begin{lemma}
\label{Lemma: Pairing comparison}
The transformation $\tilde \Delta$ identifies the Chen-Ruan Poincar\'e pairings up to a sign:
\begin{equation*}
    \langle \tilde\Delta(\gamma_1), \tilde\Delta(\gamma_2) \rangle^Z = (-1)^{\rank(E)} \langle \gamma_1, \gamma_2 \rangle^{E^\vee, \ct} ,
\end{equation*}
for any $\gamma_1$ and $\gamma_2$ in $H^*_{\CR,\ct}(E^\vee)$.
\end{lemma}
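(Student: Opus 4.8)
The plan is to reduce the statement to a single pair of twisted sectors and then evaluate both sides by hand using projection and self-intersection formulas. By bilinearity of both the Chen--Ruan pairing on $Z$ and the compact type pairing on $E^\vee$, and since $\iota$ exchanges the sectors $Z_g \leftrightarrow Z_{g^{-1}}$ and $E^\vee_g \leftrightarrow E^\vee_{g^{-1}}$, it suffices to fix $g \in S$ and take $\gamma_1 \in H^*_{\ct}(E^\vee_g)$, $\gamma_2 \in H^*_{\ct}(E^\vee_{g^{-1}})$. Using Lemma~\ref{Lemma: Narrow cohomology isomorphism} write $\gamma_1 = i_{g*}(\alpha_1)$ and $\gamma_2 = i_{g^{-1}*}(\alpha_2)$ with $\alpha_1 \in H^*(X_g)$, $\alpha_2 \in H^*(X_{g^{-1}})$; by Lemma~\ref{Lemma: Delta_+ is an isomorphism} we then have $\tilde\Delta(\gamma_1)|_{Z_g} = e^{\pi i \age_g(\mathcal{E})} j_g^*(\alpha_1)$ and $\tilde\Delta(\gamma_2)|_{Z_{g^{-1}}} = e^{\pi i \age_{g^{-1}}(\mathcal{E})} j_{g^{-1}}^*(\alpha_2)$.

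For the left-hand side, the compatibility $j_{g^{-1}} \circ \iota = \iota \circ j_g$ gives $\iota^* j_{g^{-1}}^* = j_g^* \iota^*$ on $Z_g$, so the $Z_g$-contribution to $\langle \tilde\Delta(\gamma_1), \tilde\Delta(\gamma_2)\rangle^Z$ is
\[
    e^{\pi i(\age_g(\mathcal{E}) + \age_{g^{-1}}(\mathcal{E}))} \int_{Z_g} j_g^*\!\left(\alpha_1 \cup \iota^* \alpha_2\right).
\]
Since $Z_g$ is the regular zero locus of the section $s_g$ of $E_g \to X_g$, we have $j_{g*}(1) = e(E_g)$, and the projection formula turns this into $e^{\pi i(\age_g(\mathcal{E}) + \age_{g^{-1}}(\mathcal{E}))} \int_{X_g} \alpha_1 \cup \iota^* \alpha_2 \cup e(E_g)$. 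For the right-hand side, take the lift $\overline{\gamma_2} = i^{\cs}_{g^{-1}*}(\alpha_2)$ (valid since $i_{g^{-1}*} = \phi \circ i^{\cs}_{g^{-1}*}$, as in the proof of Lemma~\ref{Lemma: Narrow cohomology isomorphism}); then using the compatibility of $\iota$ with $i$, the projection formula, and the self-intersection identity $i_g^* i^{\cs}_{g*}(-) = e(E^\vee_g) \cup -$ for the zero section of $E^\vee_g \to X_g$, the $E^\vee_g$-contribution to $\langle \gamma_1, \gamma_2\rangle^{E^\vee,\ct}$ collapses to $\int_{X_g} \alpha_1 \cup \iota^* \alpha_2 \cup e(E^\vee_g)$.

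Comparing the two expressions requires only two elementary identities. First, matching the $g$- and $g^{-1}$-eigenvalues on the fibers of $\mathcal{E}$ (noting that the eigenvalue-one subspace is exactly the fiber of $E_g$) gives $\age_g(\mathcal{E}) + \age_{g^{-1}}(\mathcal{E}) = \rank(E) - \rank(E_g)$, an integer, so $e^{\pi i(\age_g(\mathcal{E}) + \age_{g^{-1}}(\mathcal{E}))} = (-1)^{\rank(E)-\rank(E_g)}$. Second, $e(E^\vee_g) = (-1)^{\rank(E_g)} e(E_g)$ since $E^\vee_g$ is the dual of a bundle of rank $\rank(E_g)$. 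Combining, the $Z_g$-contribution equals $(-1)^{\rank(E)}$ times the $E^\vee_g$-contribution; summing over $g \in S$ yields the lemma. The only real subtlety is the bookkeeping around the non-compact sectors $E^\vee_g$ --- one must work consistently in compactly supported cohomology and use the correct lift of $\gamma_2$ --- together with verifying the self-intersection and $\iota$-compatibility statements; the age computation is the conceptual core but becomes routine once the eigenvalue matching is in place.
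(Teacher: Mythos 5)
Your proposal is correct and follows essentially the same route as the paper: reduce to the sectors $E^\vee_g$, $E^\vee_{g^{-1}}$, express $\gamma_i = i_{g*}(\alpha_i)$ and $\tilde\Delta(\gamma_i) = e^{\pi i \age(\mathcal{E})}j^*(\alpha_i)$ via Lemma~\ref{Lemma: Delta_+ is an isomorphism}, push both pairings down to $X_g$ by the projection formula and the self-intersection identities, and compare signs using $\age_g(\mathcal{E})+\age_{g^{-1}}(\mathcal{E})=\rank(E)-\rank(E_g)$ together with $e(E^\vee_g)=(-1)^{\rank(E_g)}e(E_g)$. The only cosmetic differences are that you derive the age identity by direct eigenvalue matching (the paper cites it from Adem--Leida--Ruan) and place the compact-support lift on $\gamma_2$ rather than $\gamma_1$; the sign bookkeeping works out identically.
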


\begin{proof}
Assume $\gamma_1$ is supported on $E^\vee_{g_1}$ and $\gamma_2$ is supported on $E^\vee_{g_2}$. The pairings $\langle \tilde\Delta(\gamma_1), \tilde\Delta(\gamma_2) \rangle^Z$ and $\langle \gamma_1, \gamma_2 \rangle^{E^\vee, \ct}$ equal zero unless $\iota(E^\vee_{g_1})=E^\vee_{g_2}$. 

It therefore suffices to consider the case $\gamma_1\in H^*_{\ct}(E^\vee_{g})$ and $\gamma_2\in H^*_{\ct}(E^\vee_{g^{-1}})$. Choose $\alpha_1, \alpha_2 \in H^*_{\CR}(X)$ such that $\gamma_1=i_{g*}(\alpha_1)$ and $\gamma_2=i_{g^{-1}*}(\alpha_2)$. Then, applying the projection formula and Lemma~\ref{Lemma: Delta_+ is an isomorphism}, we obtain
\begin{align*}
    \langle \tilde\Delta(\gamma_1), \tilde\Delta(\gamma_2) \rangle^Z &= (-1)^{\age_{g}(\mathcal{E}) + \age_{g^{-1}}(\mathcal{E})} \int_{IZ} j^*_g(\alpha_1) \cup \iota^*( j^*_{g^{-1}}(\alpha_2) ) \\
    &= (-1)^{\rank(E)-\rank(E_{g})} \int_{IX} \alpha_1 \cup \iota^* ( \alpha_2 \cup e(E_{g}) ) \\
    &= (-1)^{\rank(E)} \int_{IX} \alpha_1 \cup \iota^* ( i^*_{g^{-1}} i_{g^{-1}*}(\alpha_2) ) \\
    &= (-1)^{\rank(E)} \int_{IE^\vee} i_{g*}^{\cs}(\alpha_1) \cup \iota^* ( i_{g^{-1}*}(\alpha_2) ) \\
    &= (-1)^{\rank(E)} \langle \gamma_1, \gamma_2 \rangle^{E^\vee, \ct} ,
\end{align*}
where the second equality uses the fact \parencite[Lemma~4.6]{Adem_Leida_Ruan_Orbifolds_and_stringy_topology_2007} that for all $g$ in $S$
\begin{equation}
\label{Equation: Sum of ages}
    \age_{g}(\mathcal{E}) + \age_{g}(\mathcal{E}^\vee) = \rank(E) - \rank(E_{g}) .
\end{equation}
\end{proof}

\begin{theorem}
\label{Theorem: Two-pointed invariant relationship}
Given elements $\gamma_1,\gamma_2\in H^*_{\CR,\ct}(E^\vee)$, we have the equality 
\begin{equation*}
    \left\langle \tilde\Delta\left(\gamma_1\right) \psi_1^{a_1}, \tilde\Delta\left(\gamma_2\right) \psi_2^{a_2} \right\rangle^{Z,0+}_{0,\beta}
    =
    e^{\pi i (\beta(\det \mathcal{E})+\rank(E))} \left\langle \gamma_1 \psi_1^{a_1}, \gamma_2 \psi_2^{a_2} \right\rangle^{E^\vee,0+}_{0,\beta} .
\end{equation*}
\end{theorem}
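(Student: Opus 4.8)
The plan is to express both sides as integrals of the twisted virtual classes on $Q^{0+}_{0,2}(X,\beta)$ introduced in Sections~\ref{Section: Quantum Lefschetz} and~\ref{Section: The total space}, and then to apply the cycle-valued comparison of Theorem~\ref{Theorem: Cycle-valued QSD}. By linearity we may assume $\gamma_1\in H^*_{\ct}(E^\vee_{g_1})$ and $\gamma_2\in H^*_{\ct}(E^\vee_{g_2})$ for fixed $g_1,g_2\in S$; writing $\gamma_i=i_{g_i*}(\alpha_i)$ with $\alpha_i\in H^*(X_{g_i})$, Lemma~\ref{Lemma: Delta_+ is an isomorphism} gives $\tilde\Delta(\gamma_i)=e^{\pi i\age_{g_i}(\mathcal{E})}j^*_{g_i}(\alpha_i)$, and both invariants are supported on the component $Q^{0+}_{0,g_1,g_2}(X,\beta)$.

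First I would dispose of the $E^\vee$ side directly via Proposition~\ref{prop: this fits here}, which writes $\langle\gamma_1\psi_1^{a_1},\gamma_2\psi_2^{a_2}\rangle^{E^\vee,0+}_{0,\beta}$ as the integral of $ev_1^{X*}(\alpha_1)\psi_1^{a_1}\cup ev_2^{X*}(\alpha_2\cup e(E^\vee_{g_2}))\psi_2^{a_2}$ against $\left[Q^{0+}_{0,g_1,g_2}(X,\beta)\right]^{\vir}_{X/E^\vee,1}$. For the $Z$ side, since $Z$ is proper (it is closed in $X$) the invariant is an honest integral over $\left[Q^{0+}_{0,2}(Z,\beta)\right]^{\vir}$; using the fiber square of Theorem~\ref{Theorem: Functoriality of the virtual class}, the compatibilities $ev_i^X\circ\tilde j=j\circ ev_i^Z$ and $\tilde j^*\psi_i=\psi_i$, and the projection formula, this becomes $e^{\pi i(\age_{g_1}(\mathcal{E})+\age_{g_2}(\mathcal{E}))}\int_{\tilde j_*[Q^{0+}_{0,2}(Z,\beta)]^{\vir}}ev_1^{X*}(\alpha_1)\psi_1^{a_1}\cup ev_2^{X*}(\alpha_2)\psi_2^{a_2}$, and Proposition~\ref{Proposition: Quantum Lefschetz} replaces $\tilde j_*[Q^{0+}_{0,2}(Z,\beta)]^{\vir}$ by $e(\mathbbm{R}^0\pi_*[u]^*\mathcal{E})\cap[Q^{0+}_{0,2}(X,\beta)]^{\vir}$.

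The key bookkeeping step is to bridge the gap between $\mathbbm{R}^0\pi_*[u]^*\mathcal{E}$, produced by quantum Lefschetz, and $\mathbbm{R}^0\pi_*[u]^*\mathcal{E}(-x_2)$, which enters the definition of $\left[Q^{0+}_{0,2}(X,\beta)\right]^{\vir}_{X/Z,2}$. I would push forward the divisor sequence for $x_2$ tensored with $[u]^*\mathcal{E}$: weak convexity kills $\mathbbm{R}^1\pi_*[u]^*\mathcal{E}(-x_2)$ and Lemma~\ref{Lemma: H^1(E) vanishes} kills $\mathbbm{R}^1\pi_*[u]^*\mathcal{E}$, so there is a short exact sequence with cokernel $\mathbbm{R}^0\pi_*([u]^*\mathcal{E}|_{x_2})$, which on $Q^{0+}_{0,g_1,g_2}(X,\beta)$ equals $ev_2^{X*}E_{g_2}$ by the argument establishing \eqref{e: restricted to x1}. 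Hence $e(\mathbbm{R}^0\pi_*[u]^*\mathcal{E})=e(\mathbbm{R}^0\pi_*[u]^*\mathcal{E}(-x_2))\cup ev_2^{X*}(e(E_{g_2}))$, which moves the extra Euler class to the second marking and turns the $Z$-side integral into $e^{\pi i(\age_{g_1}(\mathcal{E})+\age_{g_2}(\mathcal{E}))}\int_{[Q^{0+}_{0,g_1,g_2}(X,\beta)]^{\vir}_{X/Z,2}}ev_1^{X*}(\alpha_1)\psi_1^{a_1}\cup ev_2^{X*}(\alpha_2\cup e(E_{g_2}))\psi_2^{a_2}$.

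Finally I would apply Theorem~\ref{Theorem: Cycle-valued QSD} to trade $\left[Q^{0+}_{0,g_1,g_2}(X,\beta)\right]^{\vir}_{X/Z,2}$ for $(-1)^{\beta(\det\mathcal{E})-\age_{g_1}(\mathcal{E})+\age_{g_2}(\mathcal{E}^\vee)}\left[Q^{0+}_{0,g_1,g_2}(X,\beta)\right]^{\vir}_{X/E^\vee,1}$ and use $e(E_{g_2})=(-1)^{\rank(E_{g_2})}e(E^\vee_{g_2})$ to match the remaining insertion with the $E^\vee$-side one; Proposition~\ref{prop: this fits here} then identifies what is left with $\langle\gamma_1\psi_1^{a_1},\gamma_2\psi_2^{a_2}\rangle^{E^\vee,0+}_{0,\beta}$. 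It remains to check that the accumulated phase $e^{\pi i(\age_{g_1}(\mathcal{E})+\age_{g_2}(\mathcal{E}))}(-1)^{\beta(\det\mathcal{E})-\age_{g_1}(\mathcal{E})+\age_{g_2}(\mathcal{E}^\vee)+\rank(E_{g_2})}$ equals $e^{\pi i(\beta(\det\mathcal{E})+\rank(E))}$: the two $\age_{g_1}$ contributions cancel, and the identity $\age_{g_2}(\mathcal{E})+\age_{g_2}(\mathcal{E}^\vee)=\rank(E)-\rank(E_{g_2})$ from \eqref{Equation: Sum of ages} absorbs the $\age_{g_2}$ and $\rank(E_{g_2})$ terms, leaving $\beta(\det\mathcal{E})+\rank(E)$. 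I expect the main obstacle to be not any single deep ingredient --- those (Proposition~\ref{Proposition: Quantum Lefschetz}, Proposition~\ref{prop: this fits here}, Theorem~\ref{Theorem: Cycle-valued QSD}) are already in hand --- but rather the careful tracking of the Euler-class insertion at $x_2$ on the $Z$-side (the $\mathcal{E}$ versus $\mathcal{E}(-x_2)$ discrepancy) together with the simultaneous collection of the several sign and age factors, so that they cancel precisely as predicted.
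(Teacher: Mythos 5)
Your proposal is correct and follows essentially the same route as the paper's own proof: reduce to fixed sectors, apply Proposition~\ref{Proposition: Quantum Lefschetz} on the $Z$ side, use the short exact sequence relating $\mathbbm{R}^0\pi_*[u]^*\mathcal{E}(-x_2)$, $\mathbbm{R}^0\pi_*[u]^*\mathcal{E}$, and $ev_2^{X*}E_{g_2}$, trade twisted classes via Theorem~\ref{Theorem: Cycle-valued QSD}, and conclude with Proposition~\ref{prop: this fits here}, with the same age/rank sign bookkeeping via \eqref{Equation: Sum of ages}. No gaps to report.
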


\begin{proof}
Let $g_1$ and $g_2$ be elements in $S$. Without loss of generality, choose $\gamma_1\in H^*_{\ct}(E^\vee_{g_1})$, $\gamma_2\in H^*_{\ct}(E^\vee_{g_2})$.  Choose $\alpha_1\in H^*(X_{g_1})$, and $\alpha_2\in H^*(X_{g_2})$ such that
\begin{align*}
    \gamma_1 &= p^*_{g_1}(\alpha_1 \cup e(E^\vee_{g_1})) = i_{g_1 *}(\alpha_1) \\
    \gamma_2 &= p^*_{g_2}(\alpha_2 \cup e(E^\vee_{g_2})) = i_{g_2 *}(\alpha_2) .
\end{align*}

By Proposition~\ref{Proposition: Convexity for orbifold hypersurfaces}, our assumption that $\mathcal{E}$ is weakly convex, and \eqref{e: restricted to x1} there is a short exact sequence
\begin{equation}
\label{Equation: SES for invariant relation}
\begin{tikzcd}
    0 \arrow[r]
    & \mathbbm{R}^0\pi_*[u]^*\mathcal{E}(-x_2) \arrow[r]
    & \mathbbm{R}^0\pi_*[u]^*\mathcal{E} \arrow[r]
    & ev_2^{X*}(E_{g_2}) \arrow[r]
    & 0 .
\end{tikzcd}
\end{equation}
Also note that the following diagram commutes:
\begin{equation}
\label{comD - eval maps for Q(Z) and Q(X)}
\begin{tikzcd}
    Q_{0,2}^{0+}(Z,\beta) \arrow[r,"\tilde{j}"] \arrow[d, "ev_i^{Z}"]
    & Q_{0,2}^{0+}(X,\beta) \arrow[d, "ev_i^{X}"] \\
    \bar IZ \arrow[r, "j"]
    & \bar IX .
\end{tikzcd}
\end{equation}

Expanding the integral in the left-hand-side of the statement gives us
\begin{align}
    &\left\langle \tilde\Delta\left(\gamma_1\right) \psi_1^{a_1}, \tilde\Delta\left(\gamma_2\right) \psi_2^{a_2} \right\rangle^{Z,0+}_{0,\beta} 
    \label{Equation: Two-pointed invariant of Z} \\
    =& e^{\pi i (\age_{g_1}(\mathcal{E})+\age_{g_2}(\mathcal{E}))} \int_{\left[Q_{0,2}^{0+}(Z,\beta)\right]^{\vir}} ev_1^{Z *} j^*_{g_1}(\alpha_1)  \psi_1^{a_1} \cup ev_2^{Z *} j^*_{g_2}(\alpha_2)  \psi_2^{a_2} \nonumber \\
    =& e^{\pi i (\age_{g_1}(\mathcal{E})+\age_{g_2}(\mathcal{E}))} \int_{\left[Q_{0,2}^{0+}(X,\beta)\right]^{\vir}} ev_1^{X *} (\alpha_1) \psi_1^{a_1} \cup ev_2^{X *} (\alpha_2) \psi_2^{a_2} \cup e(\mathbbm{R}^0\pi_*[u]^*\mathcal{E}) \nonumber \\
    =& e^{\pi i (\age_{g_1}(\mathcal{E})+\age_{g_2}(\mathcal{E}))} \int_{\left[Q_{0,2}^{0+}(X,\beta)\right]^{\vir}} ev_1^{X *} (\alpha_1) \psi_1^{a_1} \cup ev_2^{X *} (\alpha_2 \cup e(E_{g_2})) \psi_2^{a_2} \cup e(\mathbbm{R}^0\pi_*[u]^*\mathcal{E}(-x_2)) \nonumber \\
    =& e^{\pi i (\age_{g_1}(\mathcal{E})+\age_{g_2}(\mathcal{E}) + \rank(E_{g_2}))} \int_{\left[Q_{0,2}^{0+}(X,\beta)\right]^{\vir}_{X/Z,2}} ev_1^{X *} (\alpha_1) \psi_1^{a_1} \cup ev_2^{X *} (\alpha_2 \cup e(E_{g_2}^\vee)) \psi_2^{a_2} \nonumber \\
    =& e^{\pi i (\beta(\det \mathcal{E})+\rank(E))} \int_{\left[Q_{0,2}^{0+}(X,\beta)\right]^{\vir}_{X/E^\vee,1}} ev_1^{X *} (\alpha_1) \psi_1^{a_1} \cup ev_2^{X *} (\alpha_2 \cup e(E_{g_2}^\vee)) \psi_2^{a_2}  \nonumber \\
    =& e^{\pi i (\beta(\det \mathcal{E})+\rank(E))}  \left\langle \gamma_1 \psi_1^{a_1}, \gamma_2 \psi_2^{a_2} \right\rangle^{E^\vee,0+}_{0,\beta}. \nonumber
\end{align}
The second equality follows from \eqref{comD - eval maps for Q(Z) and Q(X)}, the projection formula, and Proposition~\ref{Proposition: Quantum Lefschetz}. The third equality is by \eqref{Equation: SES for invariant relation}. The fifth equality is by \eqref{Equation: Sum of ages} and Theorem~\ref{Theorem: Cycle-valued QSD}.  The final equality is by Proposition~\ref{prop: this fits here}.
\end{proof}

\begin{theorem}
\label{Theorem: QSD for quasimaps}
The transformation $\tilde\Delta$ identifies the operators $L^{Z,0+}(z)$ and $L^{E^\vee,0+}(z)$ up to a change of variables in the Novikov parameter:
\begin{equation*} 
    L^{Z,0+}(z) \circ \tilde\Delta
    =
    \tilde\Delta \circ L^{E^\vee,0+}(z) |_{q^\beta\mapsto e^{\pi i \beta(\det \mathcal{E})} q^\beta} .
\end{equation*}
\end{theorem}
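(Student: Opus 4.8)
The plan is to transport the claimed identity across the two ``lifting'' propositions of Sections~\ref{Section: Quantum Lefschetz} and~\ref{Section: The total space} and thereby reduce it to the cycle-valued statement of Theorem~\ref{Theorem: Cycle-valued QSD}. All of $\tilde\Delta$, $L^{Z,0+}(z)$ and $L^{E^\vee,0+}(z)$ are linear and $H^*_{\CR,\ct}(E^\vee)=\bigoplus_{g\in S}H^*_{\ct}(E^\vee_g)$, so it suffices to verify the identity on a class $\gamma\in H^*_{\ct}(E^\vee_g)$ for a fixed $g\in S$. By Lemma~\ref{Lemma: Narrow cohomology isomorphism} we may choose $\alpha\in H^*(X_g)$ with $\gamma=i_{g*}(\alpha)$, and then Lemma~\ref{Lemma: Delta_+ is an isomorphism} gives $\tilde\Delta(\gamma)=e^{\pi i\age_g(\mathcal{E})}\,j_g^*(\alpha)$.

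Applying Proposition~\ref{Proposition: Pullback of L^X/Z is equivalent to L^Z of the pullback} on the left, and on the right observing that the substitution $q^\beta\mapsto e^{\pi i\beta(\det\mathcal{E})}q^\beta$ is a formal rescaling of the Novikov variables and hence commutes with $i_*$ (so that Proposition~\ref{Proposition: Pushforward of L^X/Y is equivalent to L^Y of the pushforward} remains valid after the substitution), the desired identity becomes equivalent to the following statement about operators on $H^*_{\CR}(X)$, for $\alpha\in H^*(X_g)$:
\[
e^{\pi i\age_g(\mathcal{E})}\,j^*\!\bigl(L^{X/Z,0+}(z)(\alpha)\bigr)=\tilde\Delta\Bigl(i_*\bigl(L^{X/E^\vee,0+}(z)|_{q^\beta\mapsto e^{\pi i\beta(\det\mathcal{E})}q^\beta}(\alpha)\bigr)\Bigr).
\]

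To prove this last identity I would expand $L^{X/Z,0+}(z)$ and $L^{X/E^\vee,0+}(z)$ over degrees $\beta$ and over the open-and-closed components $Q_{0,g,g_2}^{0+}(X,\beta)$. Because $\tilde{ev}_2=\iota\circ ev_2$, the contribution of $Q_{0,g,g_2}^{0+}(X,\beta)$ to $L^{X/Z,0+}(z)(\alpha)$ is a class $A_\beta^{g_2}\in H^*(X_{g_2^{-1}})$, while its contribution to $L^{X/E^\vee,0+}(z)(\alpha)$ is a class $B_\beta^{g_2}\in H^*(X_{g_2^{-1}})$ built from the identical insertions, now capped against $[Q_{0,g,g_2}^{0+}(X,\beta)]^{\vir}_{X/Z,2}$ and $[Q_{0,g,g_2}^{0+}(X,\beta)]^{\vir}_{X/E^\vee,1}$ respectively. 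Theorem~\ref{Theorem: Cycle-valued QSD} then gives $A_\beta^{g_2}=(-1)^{\beta(\det\mathcal{E})-\age_g(\mathcal{E})+\age_{g_2}(\mathcal{E}^\vee)}B_\beta^{g_2}$. Applying $j^*$ to the left side, $\tilde\Delta\circ i_*$ to the right side, and using $\tilde\Delta(i_*(\eta))=e^{\pi i\age_{g_2^{-1}}(\mathcal{E})}j^*(\eta)$ for $\eta\in H^*(X_{g_2^{-1}})$, the coefficient of $q^\beta$ becomes $e^{\pi i\age_g(\mathcal{E})}(-1)^{\beta(\det\mathcal{E})-\age_g(\mathcal{E})+\age_{g_2}(\mathcal{E}^\vee)}j^*(B_\beta^{g_2})$ on the left and $e^{\pi i\beta(\det\mathcal{E})}e^{\pi i\age_{g_2^{-1}}(\mathcal{E})}j^*(B_\beta^{g_2})$ on the right; these agree because $\age_{g_2}(\mathcal{E}^\vee)=\age_{g_2^{-1}}(\mathcal{E})$, which follows from the eigenvalue description of the age together with Equation~\eqref{Equation: Sum of ages}. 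The $\beta=0$ terms match since $\tilde\Delta(i_{g*}(\alpha))=e^{\pi i\age_g(\mathcal{E})}j_g^*(\alpha)$.

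Given the inputs already established, the argument is mostly bookkeeping, and the one delicate point is the sign ledger: one must carefully track which twisted sector each summand lands in (this is where the involution $\iota$ hidden in $\tilde{ev}_2$ enters) and then check that the $\beta$-independent age factors $e^{\pi i\age_g(\mathcal{E})}$ and $e^{\pi i\age_{g_2^{-1}}(\mathcal{E})}$ cancel against the sign supplied by Theorem~\ref{Theorem: Cycle-valued QSD}, leaving precisely the factor $e^{\pi i\beta(\det\mathcal{E})}$ that is absorbed into the change of variables. This also clarifies why no overall sign $e^{\pi i\rank(E)}$ appears here, in contrast with Theorem~\ref{Theorem: Two-pointed invariant relationship}: that sign arises there from comparing the Chen--Ruan pairings via Lemma~\ref{Lemma: Pairing comparison}, whereas $L^{X,0+}(z)$ is defined through $\tilde{ev}_{2*}$ with no reference to a pairing.
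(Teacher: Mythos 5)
Your proposal is correct, but it takes a genuinely different route from the paper. The paper proves the operator identity by duality: using Lemma~\ref{Lemma: Delta_+ is an isomorphism} and the nondegeneracy of the ambient pairing (Assumption~\ref{Assumption: The Poincare pairing on the ambient cohomology of Z is non-degenerate}), it suffices to compare $\left\langle \tilde\Delta(\gamma_2), L^{Z,0+}(z)\tilde\Delta(\gamma_1)\right\rangle^Z$ with the corresponding pairing on the right-hand side; each $q^\beta$-coefficient is then handled by the numerical statement of Theorem~\ref{Theorem: Two-pointed invariant relationship} together with Lemma~\ref{Lemma: Pairing comparison}, whose $(-1)^{\rank(E)}$ factors cancel. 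You instead work directly with the pushforward form of $L^{X,0+}(z)$ from \eqref{Equation: Fundamental Solution}: transporting both sides through Propositions~\ref{Proposition: Pullback of L^X/Z is equivalent to L^Z of the pullback} and~\ref{Proposition: Pushforward of L^X/Y is equivalent to L^Y of the pushforward} (the Novikov substitution indeed commutes with $i_*$), decomposing over the components $Q^{0+}_{0,g,g_2}(X,\beta)$, and applying Theorem~\ref{Theorem: Cycle-valued QSD} componentwise; the sign ledger closes because $\age_{g_2}(\mathcal{E}^\vee)=\age_{g_2^{-1}}(\mathcal{E})$ and the exponent $\beta(\det\mathcal{E})-\age_{g}(\mathcal{E})+\age_{g_2}(\mathcal{E}^\vee)$ is an integer (the rank from Lemma~\ref{Lemma: Rank computation}), so $e^{\pi i\age_{g}(\mathcal{E})}$ cancels and only $e^{\pi i\beta(\det\mathcal{E})}$ survives. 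What your route buys: it bypasses the pairing comparison and the use of nondegeneracy/invertibility of $\tilde\Delta$ at this step, and it makes manifest that both sides land in $H^*_{\CR,\amb}(Z)$ (via $j^*\circ L^{X/Z,0+}$) and that $L^{E^\vee,0+}(z)$ preserves $\im(i_*)$. What the paper's route buys is that the standalone invariant-level statement (Theorem~\ref{Theorem: Two-pointed invariant relationship}) is established first and the operator identity follows formally from it. One small nit: the equality $\age_{g_2}(\mathcal{E}^\vee)=\age_{g_2^{-1}}(\mathcal{E})$ follows from the eigenvalue description of the age alone (dualizing a weight-$f$ eigenspace is the same as inverting $g_2$); Equation~\eqref{Equation: Sum of ages} is neither needed nor by itself sufficient for it.
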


\begin{proof}
By Assumption~\ref{Assumption: The Poincare pairing on the ambient cohomology of Z is non-degenerate}, the Chen-Ruan Poincar\'e pairing on the ambient cohomology of $Z$ is non-degenerate and by Lemma~\ref{Lemma: Delta_+ is an isomorphism}, $\Delta$ is an isomorphism. It therefore suffices to show that
\begin{equation}
    \label{e: pairing L-operators on Z}
    \left\langle \tilde\Delta(\gamma_2), L^{Z,0+}(z)\circ \tilde\Delta(\gamma_1) \right\rangle^Z 
    =
    \left\langle \tilde\Delta(\gamma_2), \tilde\Delta \circ L^{E^\vee,0+}(z)(\gamma_1) |_{q^\beta \mapsto e^{\pi i \beta(\det \mathcal{E})} q^\beta} \right\rangle^Z ,
\end{equation}
for any $\gamma_1$ and $\gamma_2$ in $H^*_{\CR,\ct}(E^\vee)$.

Assume $\gamma_1$ is in $H^*_{\ct}(E^\vee_{g_1})$ and $\gamma_2$ is in $H^*_{\ct}(E^\vee_{g_2})$. Expand the left-hand-side of \eqref{e: pairing L-operators on Z} to obtain
\begin{align}
    &\langle \tilde\Delta(\gamma_2), \tilde\Delta(\gamma_1) \rangle^Z + \nonumber \\
    &\sum_{\beta \in \Eff} q^\beta \left\langle \tilde\Delta(\gamma_2) , \tilde{ev}_{2*}\left( \frac{ev_1^*\left(\tilde\Delta(\gamma_1)\right)}{-z-\psi_1} \cap \left[ Q^{0+}_{0,2}(Z,\beta) \right]^{\vir} \right) \right\rangle^Z .
    \label{e: pairing with the Z-operator}
\end{align}
The right-hand-side of \eqref{e: pairing L-operators on Z} expands as
\begin{align}
     &\langle \tilde\Delta(\gamma_2), \tilde\Delta(\gamma_1) \rangle^Z + \nonumber \\
     &\sum_{\beta \in \Eff} e^{\pi i \beta(\det \mathcal{E})} q^\beta \left\langle \tilde\Delta(\gamma_2) , \tilde\Delta \circ \tilde{ev}_{2*}\left( \frac{ev_1^*(\gamma_1)}{-z-\psi_1} \cap \left[ Q^{0+}_{0,2}(E^\vee,\beta) \right]^{\vir} \right) \right\rangle^Z .
     \label{e: pairing with E^vee-operator}
\end{align}

For $\beta\in \Eff(W,G,\theta)$, we have
\begin{align*}
    &\left\langle \tilde\Delta(\gamma_2) , \tilde{ev}_{2*}\left( \frac{ev_1^*\left(\tilde\Delta(\gamma_1)\right)}{-z-\psi_1} \cap \left[ Q^{0+}_{0,2}(Z,\beta) \right]^{\vir} \right) \right\rangle^Z \\
    =&\left\langle \frac{\tilde\Delta(\gamma_1)}{-z-\psi_1} , \tilde\Delta(\gamma_2) \right\rangle_{0,\beta}^{Z,0+} \\
    =& e^{\pi i (\beta(\det \mathcal{E})+\rank(E))} \left\langle \frac{\gamma_1}{-z-\psi_1} , \gamma_2 \right\rangle_{0,\beta}^{E^\vee,0+} \\
    =& e^{\pi i (\beta(\det \mathcal{E})+\rank(E))} \left\langle \gamma_2 , \tilde{ev}_{2*}\left( \frac{ev_1^*(\gamma_1)}{-z-\psi_1} \cap \left[ Q^{0+}_{0,2}(E^\vee,\beta) \right]^{\vir} \right) \right\rangle^{E^\vee,\ct} \\
    =& e^{\pi i \beta(\det \mathcal{E})} \left\langle \tilde\Delta(\gamma_2) , \tilde\Delta \circ \tilde{ev}_{2*}\left( \frac{ev_1^*(\gamma_1)}{-z-\psi_1} \cap \left[ Q^{0+}_{0,2}(E^\vee,\beta) \right]^{\vir} \right) \right\rangle^{Z} .
\end{align*}
The first equality is the projection formula. The second is Theorem~\ref{Theorem: Two-pointed invariant relationship}. The third is another application of the projection formula. The fourth is Lemma~\ref{Lemma: Pairing comparison}.

Each coefficient of $q^\beta$ in \eqref{e: pairing with the Z-operator} and \eqref{e: pairing with E^vee-operator} is equal, completing the proof.
\end{proof}

\subsection{Quantum Serre duality for Gromov-Witten invariants}

In this section we combine the above results with the wall-crossing formulas proven by Zhou in \cite{zhou2020quasimap} to prove a quantum Serre duality statement for Gromov-Witten invariants without assuming convexity.

Theorem~1.12.2 of \cite{zhou2020quasimap} states
\begin{equation}
\label{Equation: Zhou wall-crossing}
    J^{X,\infty}(\boldsymbol{t}+\mu^{X,\geq 0+}(q,-z),q,z) = J^{X,0+}(\boldsymbol{t},q,z),
\end{equation}
where $J^{X,\varepsilon}(\boldsymbol{t},q,z)$ is defined in \parencite[\S 1.12]{zhou2020quasimap} and $\mu^{X,\geq 0+}(q,-z)$ is defined in \parencite[\S 1.11]{zhou2020quasimap}. Similar to the $J^{X,\varepsilon}$-function defined in \cite{zhou2020quasimap}, we may generalize the operator $L^{X,\varepsilon}(\boldsymbol{t},z)$ of \eqref{Equation: General Fundamental Solution} by including $\psi$-classes in the $\boldsymbol{t}$ insertions of \eqref{equation: double brackets} and \eqref{Equation: General Fundamental Solution}. Let $f$ be a cohomology-valued polynomial. We redefine the double-bracket as
\begin{align*}
    &\langle \langle \alpha_1 \psi_1^{a_1}, \ldots , \alpha_k \psi_k^{a_k} \rangle \rangle_{0}^{X,\varepsilon} (f(\psi)) \\
    :=& \sum_{\beta\in\Eff} \sum_{m\geq 0} \frac{q^\beta}{m!} \langle \alpha_1 \psi_1^{a_1} , \ldots , \alpha_k \psi_k^{a_k} , f(\psi_{k+1}) , \ldots , f(\psi_{k+m}) \rangle_{0,\beta}^{X,\varepsilon}
\end{align*}
for classes $\alpha_1,\ldots,\alpha_k \in H^*_{\CR}(X)$ and non-negative integers $a_1,\ldots,a_k$.  Then define $L^{X,\varepsilon}(f(\psi),z)$ as in \eqref{Equation: General Fundamental Solution} by replacing $\boldsymbol{t}$ with $f(\psi)$.  We now use \eqref{Equation: Zhou wall-crossing} to relate $L^{X,0+}(z) $ and $L^{X,\infty}(\mu^{X,\geq 0+}(q,-\psi),z)$.

\begin{lemma}
\label{Lemma: Wall-crossing}
The operators $L^{X,0+}(z) $ and $L^{X,\infty}(\mu^{X,\geq 0+}(q,-\psi),z)$ are equal.
\end{lemma}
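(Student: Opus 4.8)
The plan is to deduce the identity of operators from Zhou's wall-crossing formula \eqref{Equation: Zhou wall-crossing} by recognizing that both $L^{X,\varepsilon}(f(\psi),z)$ and the $J^{X,\varepsilon}$-function are assembled from the same two-pointed genus-zero quasimap invariants, just packaged differently. First I would recall the precise relationship between $L^{X,\varepsilon}(\boldsymbol{t},z)$ (and its generalization $L^{X,\varepsilon}(f(\psi),z)$) and the $J^{X,\varepsilon}$-function of \parencite[\S1.12]{zhou2020quasimap}: pairing the column vector $L^{X,\varepsilon}(f(\psi),z)(\alpha)$ against a basis element $T_i$ in compactly supported cohomology recovers a component of $J^{X,\varepsilon}$, so that $L^{X,\varepsilon}(f(\psi),z)$ is essentially the matrix whose entries are the structure constants encoded by $J^{X,\varepsilon}$. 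Concretely, one checks that $J^{X,\varepsilon}(\boldsymbol{t},q,z)$ determines, and is determined by, the operator $L^{X,\varepsilon}(\boldsymbol{t},z)$ via the (non-degenerate) Poincar\'e pairing, since the double-bracket two-point descendant invariants appearing in \eqref{Equation: General Fundamental Solution} are exactly the ones defining $J$.

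Next I would substitute $\boldsymbol{t} = \boldsymbol{0}$ in \eqref{Equation: Zhou wall-crossing}, giving
\begin{equation*}
    J^{X,\infty}\!\left(\mu^{X,\geq 0+}(q,-z),q,z\right) = J^{X,0+}(\boldsymbol{0},q,z),
\end{equation*}
and then translate each side back into the language of $L$-operators using the dictionary from the previous step. The left side becomes $L^{X,\infty}(\mu^{X,\geq 0+}(q,-\psi),z)$ — here the point is that $\mu^{X,\geq 0+}$ is a cohomology-valued power series in $z$ (or $-z$), and inserting it as the $\boldsymbol{t}$-argument of the $J$-function corresponds, under the dictionary, precisely to inserting $\mu^{X,\geq 0+}(q,-\psi)$ in the $f(\psi)$-slot of the generalized double bracket defining $L^{X,\infty}(f(\psi),z)$; this is the reason the paper bothered to generalize $L^{X,\varepsilon}$ to allow $\psi$-dependent insertions. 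The right side becomes $L^{X,0+}(z)$ by definition \eqref{Equation: Fundamental Solution}.

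I expect the main obstacle to be bookkeeping rather than conceptual: one must carefully match conventions between Zhou's $J$-function normalization (signs, the role of $z$ versus $-z$, whether the string/dilaton-type corrections are absorbed) and the $L$-operator normalization used here, and verify that the ``change of variables'' $\mu^{X,\geq 0+}(q,-z)$ really does land in the domain where the generalized $L$-operator is defined (i.e.\ that it is a well-defined formal insertion with the right support and valuation properties so the sum over $\beta$ and $m$ converges in the Novikov ring). Once the dictionary $L^{X,\varepsilon}(f(\psi),z) \leftrightarrow J^{X,\varepsilon}$ is pinned down and both sides of Zhou's formula are rewritten, the equality $L^{X,0+}(z) = L^{X,\infty}(\mu^{X,\geq 0+}(q,-\psi),z)$ is immediate. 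I would close by noting that this is the genus-zero two-pointed specialization of the general $\varepsilon$-wall-crossing philosophy, so no new geometric input beyond \eqref{Equation: Zhou wall-crossing} is needed.
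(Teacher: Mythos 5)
There is a genuine gap, and it sits exactly where you wave at ``the dictionary.'' At a fixed value of $\boldsymbol{t}$ the $J$-function is a single cohomology-valued series: its components are one-pointed descendant invariants whose remaining insertions are all copies of $\boldsymbol{t}$. The operator $L^{X,\varepsilon}(\boldsymbol{t},z)$, by contrast, records two-pointed invariants $\left\langle\left\langle \frac{\alpha}{-z-\psi}, T_i \right\rangle\right\rangle$ with an \emph{arbitrary} class $\alpha$ in the descendant slot. So it is not true that pairing $L^{X,\varepsilon}(f(\psi),z)(\alpha)$ with $T_i$ ``recovers a component of $J^{X,\varepsilon}$,'' and in particular the $\boldsymbol{t}=\boldsymbol{0}$ specialization of Zhou's identity, being an equality of two cohomology-valued series, cannot by itself yield the asserted equality of operators: a single vector equation does not determine $L^{X,0+}(z)(\alpha)$ for all $\alpha$.

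The missing step --- and the actual content of the paper's proof --- is to use Zhou's formula as an identity in $\boldsymbol{t}$ and apply $z\frac{\partial}{\partial t_p}$ \emph{before} setting $\boldsymbol{t}=\boldsymbol{0}$. Differentiation inserts an extra marked point carrying $T_p$, converting the one-pointed data of $J$ into the two-pointed data of $L$: one obtains
\begin{equation*}
    \left\langle z\tfrac{\partial}{\partial t_p} J^{X,0+}(\boldsymbol{t},q,z)|_{t=0}, \alpha \right\rangle^X = \left\langle T_p, L^{X,0+}(-z)(\alpha) \right\rangle^X ,
\end{equation*}
and the analogous identity on the $\infty$ side, where the $\mu^{X,\geq 0+}(q,-\psi)$ insertions survive at the remaining markings (this is precisely why the generalized operator $L^{X,\infty}(f(\psi),z)$ was introduced). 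Ranging over all $p\in I$ and all $\alpha$, and invoking the nondegeneracy of the Poincar\'e pairing, upgrades these scalar identities to the operator statement. Your outline can be repaired by replacing ``substitute $\boldsymbol{t}=\boldsymbol{0}$'' with ``differentiate in $t_p$ and then set $\boldsymbol{t}=\boldsymbol{0}$,'' but as written the key translation step is both unjustified and, at fixed $\boldsymbol{t}$, false.
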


\begin{proof}
Let $\{T_i\}_{i\in I}$ be a basis of $H^*_{\CR}(X)$ with dual basis $\{T^i\}_{i\in I}$. For $p\in I$, observe that
\begin{equation}\label{Equation: Derivative of quasimap J-function}
    z\frac{\partial}{\partial t_p} J^{X,0+}(\boldsymbol{t},q,z)|_{t=0} 
    = T_p + \sum_{\beta\in \Eff} \sum_{i\in I} q^\beta \left\langle T_p , \frac{T^i}{z-\psi} \right\rangle_{0,\beta}^{X,0+} T_i .
\end{equation}
Pairing \eqref{Equation: Derivative of quasimap J-function} with $\alpha \in H^*_{\CR}(X)$, we obtain
\begin{align} \label{e:afterpairing1}
    \left\langle z\frac{\partial}{\partial t_p} J^{X,0+}(\boldsymbol{t},q,z)|_{t=0} , \alpha \right\rangle^X &= \langle T_p, \alpha \rangle + \sum_{\beta\in\Eff} q^{\beta} \left\langle T_p, \frac{\alpha}{z-\psi} \right\rangle_{0,\beta}^{X,0+} \\ \nonumber
    &= \left\langle T_p , L^{X,0+}(-z)(\alpha) \right\rangle^X.
\end{align}

Similarly, note that
\begin{align}
    &z\frac{\partial}{\partial t_p} J^{X,\infty}(\boldsymbol{t}+\mu^{X,\geq 0+}(q,-z),q,z)|_{t=0} \label{Equation: Derivative of GWT J-function} \\
    =& T_p + \sum_{i\in I} \sum_{\beta\in \Eff} \sum_{m\geq 0} \frac{q^\beta}{m!} \left\langle T_p , \frac{T^i}{z-\psi} , \mu^{X,\geq 0+}(q,-\psi) , \ldots , \mu^{X,\geq 0+}(q,-\psi) \right\rangle_{0,m+2,\beta}^{X,\infty} T_i \nonumber
\end{align}
and
\begin{align} \label{e:afterpairing2}
    &\left\langle z\frac{\partial}{\partial t_p} J^{X,\infty}(\boldsymbol{t}+\mu^{X,\geq 0+}(q,-z),q,z)|_{t=0} , \alpha \right\rangle^X \\ \nonumber
    =& \langle T_p, \alpha \rangle + \sum_{\beta\in\Eff} \sum_{m\geq 0} \frac{q^{\beta}}{m!} \left\langle T_p , \frac{\alpha}{z-\psi} , \mu^{X,\geq 0+}(q,-\psi) , \ldots , \mu^{X,\geq 0+}(q,-\psi) \right\rangle_{0,m+2,\beta}^{X,\infty} \\ \nonumber
    =& \left\langle T_p , L^{X,\infty}(\mu^{X,\geq 0+}(q,-\psi),-z)(\alpha) \right\rangle^X . 
\end{align}
By \eqref{Equation: Zhou wall-crossing}, equations \eqref{e:afterpairing1} and \eqref{e:afterpairing2} are equal for all $p \in I$.
By the nondegeneracy of the Poincar\'e pairing on $X$, it follows that
\begin{equation*}
    L^{X,0+}(z) (\alpha) = L^{X,\infty}(\mu^{X,\geq 0+}(q,-\psi),z) (\alpha).
\end{equation*}

\end{proof}

Proposition~\ref{Proposition: Pullback of L^X/Z is equivalent to L^Z of the pullback} and Lemma~\ref{Lemma: Wall-crossing} imply the following relationship between two-pointed quasimap invariants of $X$ and Gromov-Witten invariants of $Z$.

\begin{corollary}
\label{Corollary: GW quantum Lefschetz}
The operators $L^{X/Z,0+}(z)$ and $L^{Z,\infty}(\boldsymbol{t},z)$ are related by 
\begin{equation*}
    j^* \circ L^{X/Z,0+}(z)  = L^{Z,\infty}(\mu^{Z,\geq 0+}(q,-\psi),z) \circ j^*  .
\end{equation*}
\end{corollary}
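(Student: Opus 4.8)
The plan is to obtain the corollary by concatenating two results already in hand: the quasimap quantum Lefschetz comparison of Proposition~\ref{Proposition: Pullback of L^X/Z is equivalent to L^Z of the pullback}, which relates $L^{X/Z,0+}(z)$ to $L^{Z,0+}(z)$, and the $\varepsilon$-wall-crossing identity of Lemma~\ref{Lemma: Wall-crossing}, which relates the $0+$ and $\infty$ theories of a GIT stack quotient, applied here to $Z$ in place of $X$.

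First I would check that Lemma~\ref{Lemma: Wall-crossing} applies verbatim with $Z$ in place of $X$. Indeed, $Z = [(Z(s)\cap W^{ss}(\theta))/G]$ is itself a GIT stack quotient in the sense of \S\ref{Section: GIT stack quotients}: its semistable locus is exactly $Z(s)\cap W^{ss}(\theta)$, which is assumed nonsingular, and $Z$ is proper because it is a closed substack of the proper stack $X$. Hence Zhou's wall-crossing formula \eqref{Equation: Zhou wall-crossing} holds for $Z$, producing the change of variables $\mu^{Z,\geq 0+}(q,-z)$ built from the $I$-function of $Z$, and the proof of Lemma~\ref{Lemma: Wall-crossing} carries over unchanged to give
\begin{equation*}
    L^{Z,0+}(z) = L^{Z,\infty}(\mu^{Z,\geq 0+}(q,-\psi),z)
\end{equation*}
as operators on $H^*_{\CR}(Z)$. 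Here the Novikov parameter $q^\beta$ for $Z$ is the one induced by $j$ from that of $X$, consistently with the convention already in force in Proposition~\ref{Proposition: Pullback of L^X/Z is equivalent to L^Z of the pullback}.

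With that in place, I would simply precompose the displayed identity with $j^*\colon H^*_{\CR}(X)\to H^*_{\CR,\amb}(Z)\subseteq H^*_{\CR}(Z)$ and substitute Proposition~\ref{Proposition: Pullback of L^X/Z is equivalent to L^Z of the pullback}:
\begin{equation*}
    j^*\circ L^{X/Z,0+}(z) = L^{Z,0+}(z)\circ j^* = L^{Z,\infty}(\mu^{Z,\geq 0+}(q,-\psi),z)\circ j^* ,
\end{equation*}
which is the asserted equality.

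The only step that demands genuine care — and where I expect essentially all of the verification to sit — is confirming that the moduli-theoretic hypotheses underlying Zhou's theorem (properness of $Q^{0+}_{g,k}(Z,\beta)$, the local complete intersection condition on singularities, smoothness of the semistable locus, and the resulting well-definedness of $I^Z$ and of $\mu^{Z,\geq 0+}$) are inherited by $Z$, and that the identification of effective classes implicit in Lemma~\ref{Lemma: Wall-crossing} for $Z$ agrees with the one used in Proposition~\ref{Proposition: Pullback of L^X/Z is equivalent to L^Z of the pullback}. Once these compatibility points are settled, the argument is a purely formal composition of the two cited statements.
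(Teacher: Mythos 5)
Your proposal is correct and is essentially the paper's own argument: the corollary is obtained exactly by applying Lemma~\ref{Lemma: Wall-crossing} with $Z$ in place of $X$ to get $L^{Z,0+}(z) = L^{Z,\infty}(\mu^{Z,\geq 0+}(q,-\psi),z)$, and then substituting this into Proposition~\ref{Proposition: Pullback of L^X/Z is equivalent to L^Z of the pullback}. Your additional check that $Z$ is itself a GIT stack quotient satisfying the hypotheses of Zhou's wall-crossing is a reasonable point of care that the paper leaves implicit, but it does not change the route.
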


Assume that the total space of the vector bundle $E^\vee \to X$ is realized as the GIT stack quotient
$$ [W \times \C^r \sslash_\theta G].$$ In other words, we require that 
$$(W \times \C^r)^{ss} = W^{ss}(\theta) \times \C^r.$$
This guarantees that the moduli space $Q_{g,k}^{0+}(E^\vee,\beta)$ of Definition~\ref{Definition: qmaps to E dual} coincides with the space 
%of quasimaps to the GIT quotient $[W \times \C^r \sslash_\theta G]$ defined in
$Q_{g,k}^{0+}([W \times \C^r \sslash_\theta G],\beta)$ of Definition~\ref{Definition: quasimap moduli space}
(as do their respective virtual classes).  In particular, this allows us to combine the wall-crossing results of \cite{zhou2020quasimap} with Theorem~\ref{Theorem: QSD for quasimaps} to deduce the following corollary.

\begin{corollary}
\label{Corollary: GW QSD}
The operator $\tilde \Delta$ identifies the fundamental solutions $L^{Z, \infty}$ and $L^{E^\vee, \infty}$ up to a change of variables:
    \begin{align*}
        &L^{Z,\infty}(\mu^{Z,\geq 0+}(q,-\psi),z) \circ \tilde \Delta \\ 
        =& \tilde \Delta \circ L^{E^\vee,\infty}(\mu^{E^\vee,\geq 0+}(q,-\psi),z) |_{q^\beta\mapsto e^{\pi i \beta(\det \mathcal{E})} q^\beta} ,
    \end{align*}
where $\mu^{Z,\geq 0+}(q,-\psi)$ and $\mu^{E^\vee,\geq 0+}(q,-\psi)$ are the  changes of variables from \parencite[\S 1.11]{zhou2020quasimap} for $Z$ and $E^\vee$ respectively.
\end{corollary}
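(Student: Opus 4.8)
The plan is to deduce the statement from Theorem~\ref{Theorem: QSD for quasimaps} by invoking the $\varepsilon$-wall-crossing of \cite{zhou2020quasimap} twice --- once for $Z$ and once for $E^\vee$ --- in the packaged form of Lemma~\ref{Lemma: Wall-crossing}. First I would observe that both targets are GIT stack quotients to which Zhou's results apply: the complete intersection $Z = [(Z(s)\cap W^{ss}(\theta))/G]$ is the GIT quotient of the affine variety $Z(s)\subseteq W$ by $G$ with the inherited character $\theta$, and is proper because $X$ is, while the total space $E^\vee$ is the GIT stack quotient $[W\times\C^r\sslash_\theta G]$ by the standing hypothesis $(W\times\C^r)^{ss} = W^{ss}(\theta)\times\C^r$ imposed just before the corollary. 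Granting that the hypotheses of \cite[Theorem~1.12.2]{zhou2020quasimap} are met, Lemma~\ref{Lemma: Wall-crossing} --- together with its compact-type analogue for the noncompact target $E^\vee$, discussed below --- supplies the two operator identities
\begin{equation*}
    L^{Z,0+}(z) = L^{Z,\infty}(\mu^{Z,\geq 0+}(q,-\psi),z), \qquad L^{E^\vee,0+}(z) = L^{E^\vee,\infty}(\mu^{E^\vee,\geq 0+}(q,-\psi),z).
\end{equation*}

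The second step is to substitute these two identities into Theorem~\ref{Theorem: QSD for quasimaps}. Since $q^\beta\mapsto e^{\pi i\beta(\det\mathcal{E})}q^\beta$ is a ring homomorphism of the Novikov ring, applying it to both sides of the second identity above preserves the equality, so that $L^{E^\vee,0+}(z)|_{q^\beta\mapsto e^{\pi i\beta(\det\mathcal{E})}q^\beta}$ equals $L^{E^\vee,\infty}(\mu^{E^\vee,\geq 0+}(q,-\psi),z)|_{q^\beta\mapsto e^{\pi i\beta(\det\mathcal{E})}q^\beta}$. Feeding the first identity into the left-hand side of Theorem~\ref{Theorem: QSD for quasimaps} and this twisted second identity into its right-hand side turns the theorem into the asserted identity between $L^{Z,\infty}(\mu^{Z,\geq 0+}(q,-\psi),z)\circ\tilde\Delta$ and $\tilde\Delta\circ L^{E^\vee,\infty}(\mu^{E^\vee,\geq 0+}(q,-\psi),z)|_{q^\beta\mapsto e^{\pi i\beta(\det\mathcal{E})}q^\beta}$.

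I expect the main obstacle to be the first of two points that need care, the rest being formal. First, the proof of Lemma~\ref{Lemma: Wall-crossing} uses the nondegeneracy of the ordinary Poincar\'e pairing, so it applies on the nose only to the compact target $Z$; for $E^\vee$ one needs the analogue in which $\langle-,-\rangle^{E^\vee}$ is replaced by the compact-type pairing $\langle-,-\rangle^{E^\vee,\ct}$ and the test classes range over $H^*_{\CR,\ct}(E^\vee)$, together with the compact-type form of \cite[Theorem~1.12.2]{zhou2020quasimap}. The proof is the same pairing argument using nondegeneracy of the compact-type pairing, but one must check that \cite{zhou2020quasimap} genuinely covers the noncompact GIT quotient $E^\vee$ with compact-type insertions --- this is the delicate bookkeeping. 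Second, one should verify that both sides of the identity remain well-defined maps $H^*_{\CR,\ct}(E^\vee)\to H^*_{\CR,\amb}(Z)$ after passing to Gromov--Witten theory: that $L^{Z,\infty}(\mu^{Z,\geq 0+}(q,-\psi),z)$ preserves $H^*_{\CR,\amb}(Z) = \im(j^*)$ follows from Corollary~\ref{Corollary: GW quantum Lefschetz}, which intertwines it with $L^{X/Z,0+}(z)$ via the surjection $j^*$, and the compact-type analogue for $L^{E^\vee,\infty}(\mu^{E^\vee,\geq 0+}(q,-\psi),z)$ follows similarly from Proposition~\ref{Proposition: Pushforward of L^X/Y is equivalent to L^Y of the pushforward} together with the $E^\vee$ wall-crossing. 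Once these points are in place the corollary is a one-line substitution, requiring no further geometric input.
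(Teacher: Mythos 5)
Your proposal is correct and follows essentially the same route as the paper: the paper deduces the corollary by imposing the hypothesis $(W\times\C^r)^{ss}=W^{ss}(\theta)\times\C^r$ so that Zhou's wall-crossing (packaged as Lemma~\ref{Lemma: Wall-crossing}) applies to both $Z$ and $E^\vee$, and then substitutes the two resulting identities into Theorem~\ref{Theorem: QSD for quasimaps}, exactly as you do. Your added remarks on the compact-type pairing for the noncompact target $E^\vee$ and on preservation of the relevant subspaces are reasonable points of care that the paper leaves implicit.
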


% --------------------------------------------------------------
%                         Bibliography
% --------------------------------------------------------------

\printbibliography

% --------------------------------------------------------------
%                         End
% --------------------------------------------------------------

\end{document}